\journal{Frontiers of Mathematics in China}
\newtheorem{theorem}{Theorem}[section]
\newtheorem{definition}{Definition}[section]
\newtheorem{corollary}{Corollary}[section]
\newtheorem{lemma}{Lemma}[section]
\newcommand{\E}{\mathbb{E}}
\begin{document}

\begin{frontmatter}
\title{The COM-negative binomial distribution: modeling overdispersion and ultrahigh zero-inflated count data}

\address[label1]{School of Mathematical Sciences and Center for Statistical Science, Peking University, Beijing, 100871, China}
\address[label2]{School of Statistics, East China Normal University, Shanghai, 200241, China}
\address[label3]{School of Mathematics and Statistics, Central China Normal University, Wuhan, 430079, China}

\author[label1]{Huiming Zhang}
\ead{zhanghuiming@pku.edu.cn}

\author[label2]{Kai Tan}
\ead{kaitan@stu.ecnu.edu.cn}

\author[label3]{Bo Li}
\ead{haoyoulibo@163.com}

\begin{abstract} In this paper, we focus on the COM-type negative binomial distribution with three parameters, which belongs to COM-type $(a,b,0)$ class distributions and family of equilibrium distributions of arbitrary birth-death process. Besides, we show abundant distributional properties such as overdispersion and underdispersion, log-concavity, log-convexity (infinite divisibility), pseudo compound Poisson, stochastic ordering and asymptotic approximation. Some characterizations including sum of equicorrelated geometrically distributed random variables, conditional distribution, limit distribution of COM-negative hypergeometric distribution, and Stein's identity are given for theoretical properties. COM-negative binomial distribution was applied to overdispersion and ultrahigh zero-inflated data sets. With the aid of ratio regression, we employ maximum likelihood method to estimate the parameters and the goodness-of-fit are evaluated by the discrete Kolmogorov-Smirnov test.
\end{abstract}
\begin{keyword}
overdispersion \sep zero-inflated data \sep infinite divisibility \sep Stein's characterization \sep discrete Kolmogorov-Smirnov test

MSC2010: 60E07 60A05 62F10
\end{keyword}
\end{frontmatter}

\section{Introduction}

Before 2005, Conway-Maxwell-Poisson distribution (denoted as COM-Poisson distribution) had been rarely used since \cite{conway62} briefly introduced it for modeling of queuing systems with state-dependent service time, see also \cite{wimmer99}, \cite{wimmer95}. About ten years ago, the COM-Poisson distribution with two parameters was revived by \cite{shmueli05} as a generalization of Poisson distribution. More recently, there has been a fast growth of researches on COM-Poisson distribution in terms of related statistical theory and applied methodology, see \cite{sellers12} and the references therein. The probability mass function~(p.m.f.) of the COM-Poisson random variable (r. v) $X$ is given by
\begin{equation} \label{eq:com}
\mathrm{P}(X = k) = \frac{{{\lambda ^k}}}{{{{(k!)}^\nu }}} \cdot \frac{1}{{Z(\lambda ,\nu )}},(k = 0,1,2, \cdots ),
\end{equation}
where $\lambda ,\nu  > 0$ and $Z(\lambda ,\nu ) = \sum\limits_{i = 0}^\infty  {\frac{{{\lambda ^i}}}{{{{(i!)}^\nu }}}} $. We denote (\ref{eq:com}) as $X \sim {\rm{CMP}}(\lambda ,\nu )$.

\cite{kokonendji08} proved that COM-Poisson distribution was overdispersed when $\nu  \in {\rm{[0,1)}}$ and underdispersed when $\nu  \in {\rm{(1, + }}\infty {\rm{)}}$. Another extension of Poisson is negative binomial, which is a noted discrete distribution with overdispersion property and is widely applied in actuarial sciences(see \cite{denuit07}, \cite{kaas08}). The p.m.f. of the negative binomial r.v $X$ is
\begin{equation} \label{eq:nbd}
\mathrm{P}(X = k) =\frac{{\Gamma (r + k)}}{{k!{\mkern 1mu} \Gamma (r)}}{p^k}{(1 - p)^r}, (k = 0,1,2, \ldots),
\end{equation}
where $r \in (0,\infty )$ and $p \in (0,1)$.

 In this paper, we propose a COM-negative binomial (denoted by CMNB) distribution , which extends the negative binomial distribution and depends on three parameters by replacing $\frac{{\Gamma (r + k)}}{{k!{\mkern 1mu} \Gamma (r)}}$ in (\ref{eq:nbd}) with $({\frac{{\Gamma (r + k)}}{{k!{\mkern 1mu} \Gamma (r)}})^\nu }$ and divide the normalization constant $C(r,\nu ,p) = \sum\limits_{i = 0}^\infty  {(\frac{{\Gamma (r + i)}}{{i!{\mkern 1mu} \Gamma (r)}})}^{\nu} {p^i}{(1 - p)^r}$.

\begin{definition}
A r.v. $X$ is said to follow CMNB distribution $({\rm{CMNB}}(r,\nu ,p))$ with three parameters $(r,\nu ,p)$ if the p.m.f. is given by
\begin{equation} \label{eq:cnbd}
\mathrm{P}(X = k) =
\frac {{{{(\frac{{\Gamma (r + k)}}{{k!{\mkern 1mu} \Gamma (r)}})}^\nu }{p^k}{{(1 - p)}^r}}}
      {{\sum\limits_{i = 0}^\infty  {{{(\frac{{\Gamma (r + i)}}{{i!{\mkern 1mu} \Gamma (r)}})}^\nu }} {p^i}{{(1 - p)}^r}}}
={{\left(\frac{{\Gamma (r + k)}}{{k!{\mkern 1mu} \Gamma (r)}}\right)}^\nu }
{{p^k}{{(1 - p)}^r}}
\frac{1}{{C(r,\nu ,p)}},\quad (k = 0,1,2, \ldots ),
\end{equation}
where $r,\nu \in (0,\infty )$ and $p \in (0,1)$.
\end{definition}

In another point of view, the alternative form of (\ref{eq:cnbd}) can be written as
\begin{equation} \label{eq:cnbda}
{\rm{P}}(X = k) = \frac{{{{[\frac{{\Gamma (r + k)}}{{k!\Gamma (r)}}{{\tilde p}^k}{{(1 - {\tilde p})}^r}]}^\nu }}}{{\sum\limits_{i = 0}^\infty  {[{{\frac{{\Gamma (r + i)}}{{i!\Gamma (r)}}}^\nu }} {{\tilde p}^i}{{(1 - {\tilde p})}^r}{]^\nu }}} = {\left( {\frac{{\Gamma (r + k)}}{{k!\Gamma (r)}}} \right)^\nu }{\tilde p^k}{(1 - \tilde p)^r}\frac{1}{{C(r,\nu ,\tilde p)}},\quad (k = 0,1,2, \ldots ),
\end{equation}
where $\tilde p = {p^{1/\nu }}$.

When $r \le 1$, we will show that CMNB (\ref{eq:cnbd}) is discrete compound Poisson, which has wide application in risk theory (includes non-life insurance) as well, see \cite{zhang14} and the  references therein. For illustrating the p.m.f of $X$ defined in (3), we plot 12 cases of CMNB in Figture \ref{cmnb_plot}.

It is easy to see that our CMNB distribution belongs to the COM-type extension of $(a,b,0)$ class. The $(a,b,0)$ class distribution is a famous family of distributions which sometimes refers to Katz class (see remarks in section 2.3.1 of \cite{johnson05}). It has significant applications in non-life insurance mathematics, especially for modelling claim counts~( loss models, collective risk models), see \cite{denuit07}. A classic result in non-life insurance textbooks states that the $(a,b,0)$ class distribution only contains degenerate, binomial, Poisson and the negative binomial distribution. After adding a new parameter $\nu  \in {{\rm{R}}^ + }$, we define the COM-type extension of $(a,b,0)$ distribution, and it is convenient to see that degenerate, COM-Poisson, COM-binomial and CMNB belong to this class of distributions.

\cite{shmueli05} firstly proposed the COM-binomial distribution which is presented as a sum of equicorrelated Bernoulli variables. \cite{borges14} studied some properties and an asymptotic approximation (e.g. COM-binomial approximates to COM-Poisson under some conditions) of this family of distributions in detail. We will show that some results of COM-Poisson can be extended in our CMNB distribution.  \cite{kadane15} gave the exchangeably properties, sufficient statistics and multivariate extension of COM-binomial distribution. Another variant of CMNB distribution has been studied by \cite{imoto14}, it just replaced the term ${\Gamma (r + k)}$ in (\ref{eq:nbd}) by ${\Gamma (r + k)}^\nu $ and then divided the normalization constant. We will give adequate reasons to support our extension in succeeding sections.  \cite{chakraborty16} considered the extended COM-Poisson distribution (ECOMP$(r,\theta ,\alpha ,\beta )$):
\begin{equation} \label{eq:E}
\mathrm{P}(X = k) = \frac{{\Gamma {{(r + k)}^\beta }}}{{{{(k!)}^{\alpha }}}}{\theta ^k}/\sum\limits_{i = 1}^\infty  {\frac{{\Gamma {{(r + i)}^\beta }}}{{{{(i!)}^{\alpha  }}}}{\theta ^i}} \quad (k = 0,1,2, \ldots ),
\end{equation}
where the parameter space is $(r \ge 0,\theta  > 0,\alpha  > \beta ) \cup (r > 0,0 < \theta  < 1,\alpha  = \beta )$. ECOMP distribution combines \cite{imoto14}'s extension ($\alpha  = 1 $), \cite{chakrabortyOng16}'s extension (COMNB$(r,\theta ,\alpha ,\beta )$ with $\beta = 1 $) and the CMNB distribution (\ref{eq:cnbd}) with $\alpha   = \beta  = \nu $. The COM-Poisson is a special case of ECOMP when $\beta  = 0 $. ECOMP distribution (\ref{eq:E}) has the queuing systems characterization (birth-death process with arrival rate ${{\lambda _k} = {{(r + k)}^\nu }}$ and service rate ${{\mu _k} = {k^\nu }\mu }$ for $k \ge 1$ ), see also \cite{brown01} for arbitrary birth-death process characterization.

The rest of the article is organized as the follows. In section 2, we propose the COM-type $(a,b,0)$ class distributions and demonstrate some example of COM-type $(a,b,0)$ class which includes the CMNB. Further more, some properties of CMNB, such as Renyi entropy and Tsallis entropy representation, overdispersion and underdispersion, log-concavity, log-convexity (infinite divisibility), pseudo compound Poisson, stochastic ordering and asymptotic approximation are studied. In section 3, some conditional distribution characterizations and Stein identity characterization are presented by using related lemmas, and we also show that COM-negative hypergeometric can approximate to CMNB. In section 4, inverse methods were introduced to generate CMNB distributed random variables. Section 5 then estimates the parameters by maximum likelihood method, in which the initial values are provided by recursive formula. In section 6, two simulated data sets and two applications to actuarial claim data sets are given as examples. In section 7, we provide some potential and further research suggestions based on the properties and characterizations of CMNB distribution. 

\begin{figure}[!ht]
\centering
\includegraphics[scale=0.8]{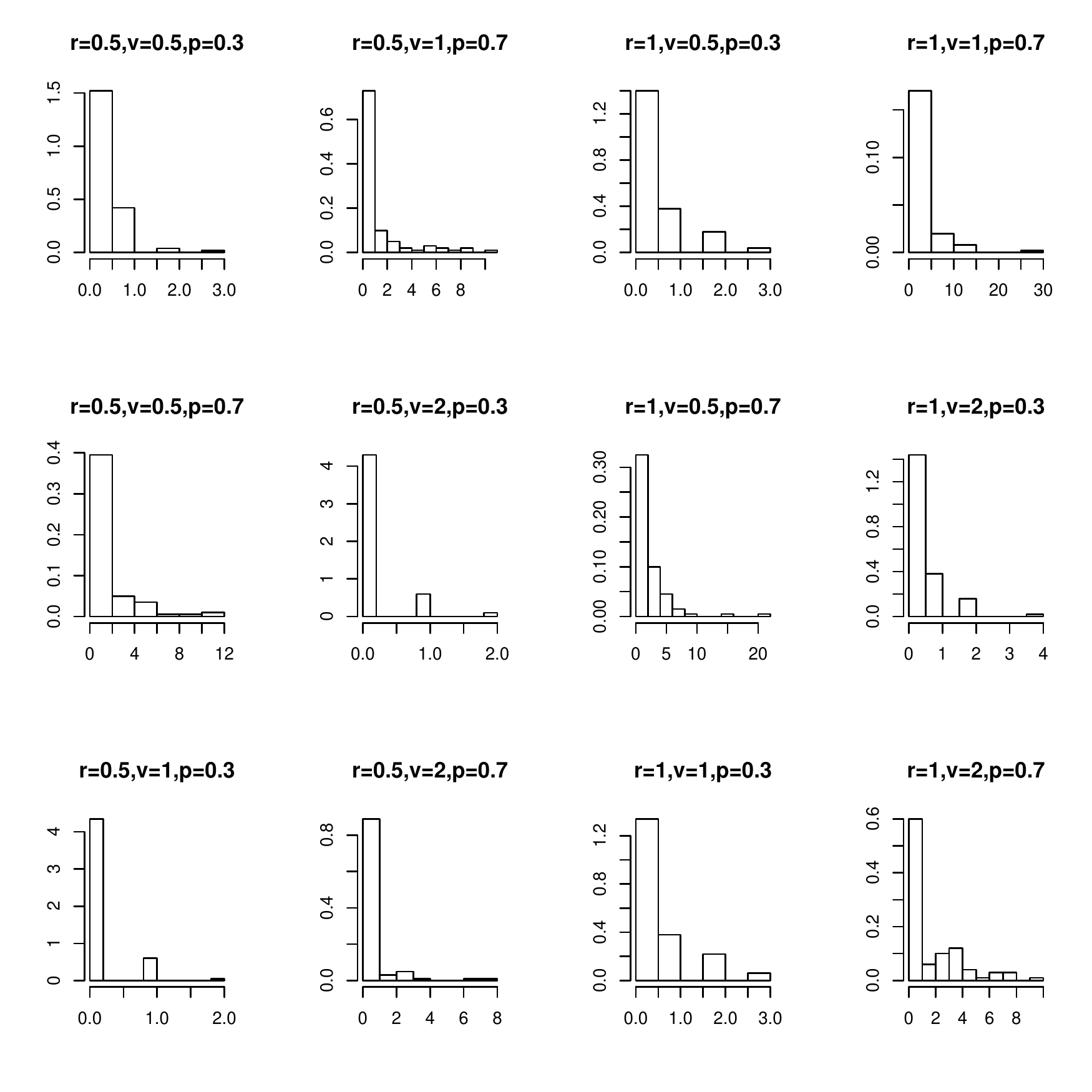}
\caption{Some plots of p.m.f. (\ref{eq:cnbd}) for  $r \in \{ 0.5,1\}$ ,$\nu  \in \{ 0.5, 1, 2\} $  and $p \in \{ 0.3,0.7\} $.}\label{cmnb_plot}
\end{figure}

\section{Properties}

\subsection{Recursive formula and ultrahigh zero-inflated property}

The recursive formula (or ratios of consecutive probabilities) is given by
\begin{equation} \label{eq:rf}
\frac{{\mathrm{P}(X = k)}}{{\mathrm{P}(X = k - 1)}} = p{\left( {\frac{{\Gamma (k + r)}}{{k!\Gamma (r)}}} \right)^\nu }{\raise0.7ex\hbox{${}$} \!\mathord{\left/
 {\vphantom {{} {}}}\right.\kern-\nulldelimiterspace}
\!\lower0.7ex\hbox{${}$}}{\left( {\frac{{\Gamma (k - 1 + r)}}{{(k - 1)!\Gamma (r)}}} \right)^\nu } = p \cdot {\left( {\frac{{k - 1 + r}}{k}} \right)^\nu }.
\end{equation}

We say that a zero-inflated count data $X$ following some discrete distribution is ultrahigh zero-inflated if $\mathrm{P}(X = 0)/\mathrm{P}(X = 1) \gg 1 $. For CMNB case with two parameters, we have
$\frac{\text{P}_{\text{CMNB}}(X=0)}{\text{P}_{\text{CMNB}}(X=1)}= \frac 1p(\frac 1r)^{\nu}$;
and for negative binomial case, we get
$\frac{\text{P}_{\text{NB}}(X=0)}{\text{P}_{\text{NB}}(X=1)}= \frac{1}{{pr}}.$
If we choose ${r < 1}$ and ${\nu > 1}$ in CMNB case, then this CMNB distribution is more flexible to deal with the ratio $\frac{{\mathrm{P}(X = 0)}}{{\mathrm{P}(X = 1)}}$ comparing to the NB distribution, as $\frac{1}{p}{\left( {\frac{1}{r}} \right)^\nu } \gg \frac{1}{{pr}}$ when $\nu >1$. (For examples, the plots of CMNB distribution with ${\nu = 2}$ in Figure \ref{cmnb_plot}; $\nu =10.4$ in Table \ref{real_2} of section 6.2). In the insurance company, the more zero insurance claims, the less risk to bankrupt. The following definition provides a generalization of $(a,b,0)$ class distribution.

\begin{definition}
Let $X$ be a discrete r.v., if ${p_k} = \mathrm{P}(X = k)$ satisfies the recursive formula
\begin{equation}
{p_k} = {(a + \frac{b}{k})^\nu }{p_{k - 1}},{\rm{   (}}k = 1,2, \cdots )
\end{equation}
for some constants $a,b \in {\rm{R}}$ and $\nu  \in {{\rm{R}}^ + }$, then we call it COM-type $(a,b,0)$ class distribution. We denote this class as $COM(a,b,\nu ,0)$ .
\end{definition}

The COM-Poisson distribution ${\rm{CMP}}(\lambda ,\nu )$ satisfies the case $a=0$, since ${p_k} = \frac{\lambda }{{{k^\nu }}}{p_{k - 1}}$ and $b={\lambda ^{1/\nu }}$.

From (\ref{eq:rf}), it is easy to see that CMNB distribution belongs to COM-type $(a,b,0)$ class distribution with ${p_0} = 1/C(r,\nu ,p)$.
\begin{equation}
\frac{{{p_k}}}{{{p_{k - 1}}}} = p{\left( {1 + \frac{{r - 1}}{k}} \right)^\nu } = {\left( {{p^{1/\nu }} + \frac{{(r - 1){p^{1/\nu }}}}{k}} \right)^\nu }.
\end{equation}

The COM-binomial distribution (CMB), see \cite{shmueli05}, \cite{borges14}, with p.m.f.
\begin{equation} \label{eq:cb}
\mathrm{P}(X = k) = \frac{{{{{{m} \choose {k}}}^v}{p^k}{{(1 - p)}^{m - k}}}}{{\sum\limits_{i = 0}^m {{{{{m} \choose {i}}}^v}{p^i}{{(1 - p)}^{m - i}}} }},\quad k = 0,1, \cdots ,m,
\end{equation}
where $\nu  \in {{\rm{R}}^ + }, m \in {{\rm{Z}}^ + }, p \in (0,1)$. We denote (\ref{eq:cb}) as $X \sim {\rm{CMB}}(m, p,\nu )$. Since the ratio of consecutive probabilities is $\frac{{\mathrm{P}(X = k)}}{{\mathrm{P}(X = k - 1)}} = \frac{p}{{1 - p}}{\left( {\frac{{m + 1 - k}}{k}} \right)^\nu }$, COM-binomial distribution belongs to COM type $(a,b,0)$ class distribution.\\[2mm]
\textbf{Remark 1}: As we know, the $(a,b,0)$ class distribution only contains degenerate distribution, binomial distribution, Poisson distribution and the negative binomial distribution. But there are other distributions belongs to COM type $(a,b,0)$ class. For example, $m \in {{\rm{Z}}^ + }$ can be replaced by $m \in {{\rm{R}}^ + }$ in (\ref{eq:cb}), and the p.m.f. is given by
$$
\mathrm{P}(X = k) = \frac{{{{{{{m} \choose {k}}}^{2}}}{p^k}{{(1 - p)}^{m - k}}}}{{\sum\limits_{i = 0}^{\infty } {{{{{m} \choose {i}}}^{2}}{p^i}{{(1 - p)}^{m - i}}} }},\quad k = 0,1,2, \cdots ,
$$
where $p/(1 - p) < 1$ such that $\sum\limits_{i = 0}^\infty  {{{{m} \choose {i}}^2}{p^i}{{(1 - p)}^{m - i}}}  < \infty $.\\[2mm]
\textbf{Remark 2}: \cite{brown01} considered the very large class of stationary distribution of birth-death process with arrival rate ${\lambda_k}$ and service rate ${\mu_k}$ by the recursive formula:
\[\frac{{\mathrm{P}(X = k)}}{{\mathrm{P}(X = k - 1)}} = \frac{{{\lambda _{k - 1}}}}{{{\mu _k}}},\quad (k =1,2
, \ldots ).\]

Thus we can construct a birth-death process with arrival rate ${\lambda_k}=c{[a(k+1) + b]^\nu }$ and service rate ${\mu_k}=c{k^\nu }$, and $c$ is a positive constant, which characterizes the COM-type $(a,b,0)$ class distribution.
\cite{}
\subsection{Related to R{\'e}nyi entropy and Tsallis entropy}

Notice the R{\'e}nyi entropy (see \cite{renyi61}) in the information theory, which generalizes the Shannon entropy. The Renyi entropy of order $\alpha$ of a discrete r.v. $X$:
$$
H_\alpha ^R(X) = \frac{1}{{1 - \alpha }}{\rm{ln}}\sum\limits_{i = 0}^\infty  {[P} (X = i){]^\alpha },(\alpha  \ne 1).
$$

Let $X$ be negative binomial distributed in (\ref{eq:nbd}) and $X_\nu$ be CMNB distributed in (\ref{eq:cnbda}). Then the normalization constant $C(r,\nu ,\tilde p)$ in (\ref{eq:cnbda}) has R{\'e}nyi entropy representation $C(r,\nu ,\tilde p) = {e^{(1 - \nu )H_\nu ^R(X)}}$,
so $P({X_\nu } = x) = {P^\nu }(X = x)/{e^{(1 - \nu )H_\nu ^R(X)}}$.

Another generalization of Shannon entropy in physic is the Tsallis entropy. For r.v. $X$, its Tsallis entropy of order $\alpha$ is defined by
$$
H_\alpha ^T(X) = \frac{1}{{1 - \alpha }}\left( {\sum\limits_{i = 0}^\infty  {[P} (X = i){]^\alpha } - 1} \right),(\alpha  \ne 1).
$$
This entropy was introduced by \cite{tsallis88} as a basis for generalizing the Boltzmann-Gibbs statistics. Also, the normalization constant $C(r,\nu ,\tilde p)$ in (\ref{eq:cnbda}) has Tsallis entropy representation $C(r,\nu ,\tilde p) = 1 + (1 - \nu )H_\nu ^T(X)$, then $P({X_\nu } = x) = {P^\nu }(X = x)/[1 + (1 - \nu )H_\nu ^T(X)]$.

\subsection{Log-concave, Log-convex, Infinite divisibility}

This subsection deals with log-concavity and log-convexity of the CMNB distribution. A discrete distribution with ${p_k} = \mathrm{P}(X = k)$ is said to have log-concave (log-convex) p.m.f. if
\[\frac{{{p_{k + 1}}{p_{k - 1}}}}{{p_k^2}} = \frac{{{p_{k + 1}}}}{{{p_k}}}/\frac{{{p_k}}}{{{p_{k - 1}}}} \le ( \ge )1, \quad k \ge 1.\]

\begin{lemma}
The CMNB distribution is log-concave if $r  \ge 1$ and log-convex if $r \le 1$.
\end{lemma}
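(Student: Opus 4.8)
The plan is to work directly from the recursive formula (\ref{eq:rf}), which already expresses the consecutive probability ratios in closed form: for $k\ge 1$,
\[
\frac{p_k}{p_{k-1}} = p\left(\frac{k-1+r}{k}\right)^{\nu}.
\]
Since the defining inequality for log-concavity/log-convexity is phrased in terms of $\dfrac{p_{k+1}p_{k-1}}{p_k^2}=\dfrac{p_{k+1}/p_k}{p_k/p_{k-1}}$, I would simply substitute the formula above (with $k$ and $k+1$) and let the factors of $p$ cancel, obtaining
\[
\frac{p_{k+1}p_{k-1}}{p_k^2}
=\left(\frac{k(k+r)}{(k+1)(k-1+r)}\right)^{\nu},\qquad k\ge 1,
\]
where every factor in the fraction is strictly positive because $r>0$ and $k\ge 1$ force $k-1+r>0$.

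Because $\nu>0$, the map $t\mapsto t^{\nu}$ is increasing on $(0,\infty)$, so the comparison of the above expression with $1$ is equivalent to comparing $k(k+r)$ with $(k+1)(k-1+r)$. The next step is the one algebraic identity the proof rests on: expanding both products gives $k(k+r)=k^2+rk$ and $(k+1)(k-1+r)=k^2+rk+(r-1)$, hence
\[
k(k+r)-(k+1)(k-1+r)=1-r,
\]
which is independent of $k$. Therefore $\dfrac{p_{k+1}p_{k-1}}{p_k^2}\ge 1$ for all $k\ge 1$ when $r\le 1$, i.e. the p.m.f.\ is log-convex, and $\dfrac{p_{k+1}p_{k-1}}{p_k^2}\le 1$ for all $k\ge 1$ when $r\ge 1$, i.e. the p.m.f.\ is log-concave. (When $r=1$ the ratio is identically $1$, consistent with the geometric distribution being the $\nu=1$, $r=1$ case.)

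There is essentially no hard step here — the only thing to be careful about is verifying positivity of all the factors so that raising to the power $\nu$ preserves inequalities, and noting that the sign of $1-r$ does not depend on $k$, so a single case distinction on $r$ suffices. The reason the lemma is worth isolating is its consequence: log-convexity of a discrete p.m.f.\ supported on $\{0,1,2,\dots\}$ is a classical sufficient condition for infinite divisibility (Steutel–van Harn), which is presumably exploited in the remainder of this subsection; that implication, however, would be argued separately and does not affect the proof of the present statement.
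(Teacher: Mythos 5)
Your argument is exactly the paper's: both compute $\frac{p_{k+1}p_{k-1}}{p_k^2}=\bigl(\frac{k^2+kr}{k^2+kr+r-1}\bigr)^{\nu}$ from the consecutive-probability ratio and compare the numerator and denominator, whose difference $1-r$ settles the case distinction. The proof is correct and matches the paper's approach.
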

\begin{proof}
In fact, using the ratio of consecutive probabilities (\ref  {eq:rf}), we have
$$
M=\frac{{{p_{k + 1}}}}{{{p_k}}}/\frac{{{p_k}}}{{{p_{k - 1}}}}=p{\left( {\frac{{r + k}}{{k + 1}}} \right)^\nu }/p{\left( {\frac{{r + k - 1}}{k}} \right)^\nu } = {\left( {\frac{{{k^2} + kr}}{{{k^2} + kr + r - 1}}} \right)^\nu }.
$$
Then $M \le 1$ iff $r \ge 1$(log-concave) and $M \ge 1$ iff $r \le 1$(log-convex).
\end{proof}
{\noindent\textbf{Remark 3}:} \cite{ibragimov08} called a distribution strongly unimodal if it is unimodal and its convolution with any unimodal distribution is unimodal. He showed that the strongly unimodal distributions is equivalent to the log-concave distributions. So CMNB distribution has strong unimodality(see Figure \ref{cmnb_plot} for example) when $r \ge 1$. \cite{steutel70} showed that all log-convex discrete distributions are infinitely divisible, the background and detailed proof can be found in \cite{steutel03}. Then we obtain infinite divisibility of CMNB distribution when $r \le 1$.

\begin{corollary}\label{cor:id}
The CMNB distribution (\ref{eq:cnbd}) is discrete infinitely divisible (discrete compound Poisson distribution) if $r \le 1$.
\end{corollary}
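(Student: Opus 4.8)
The plan is to read this off immediately from Lemma~2.1 together with the two classical facts already recalled in Remark~3. First I would note that, by Lemma~2.1, when $r\le 1$ the CMNB p.m.f. $(p_k)_{k\ge 0}$ in (\ref{eq:cnbd}) is log-convex; moreover $p_k>0$ for every $k$, so in particular $p_0=1/C(r,\nu,p)>0$ and the support is all of $\{0,1,2,\ldots\}$ with no interior gaps. These are exactly the regularity conditions under which the log-convexity criterion for infinite divisibility applies.

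Next I would invoke the theorem of \cite{steutel70} (see also \cite{steutel03}): a discrete probability distribution on $\{0,1,2,\ldots\}$ with $p_0>0$ whose p.m.f. is log-convex --- equivalently, whose ratio $p_k/p_{k-1}$ is nondecreasing in $k$, which here equals $p\,\bigl(\tfrac{k-1+r}{k}\bigr)^{\nu}$ by (\ref{eq:rf}) and is indeed nondecreasing precisely when $r\le 1$ --- is infinitely divisible. I would then combine this with the classical characterization that every infinitely divisible distribution supported on the nonnegative integers is a discrete compound Poisson distribution (see, e.g., \cite{steutel03}), i.e.\ the law of $\sum_{j=1}^{N}Y_j$ with $N$ Poisson-distributed and $(Y_j)$ i.i.d.\ positive-integer-valued and independent of $N$. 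Together these give the stated conclusion, including the parenthetical.

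There is essentially no remaining technical obstacle: the substantive computation was already carried out in Lemma~2.1, and what is left is only to verify that the hypotheses of the Steutel criterion hold verbatim --- positivity of $p_0$ and log-convexity for all $k\ge 1$, both immediate from (\ref{eq:cnbd}) and (\ref{eq:rf}) --- and to recall that for lattice distributions on $\{0,1,2,\ldots\}$ infinite divisibility and the discrete compound Poisson property coincide, so the parenthetical in the statement is a reformulation rather than an additional assertion. If one wished to be more explicit, one could further exhibit the compounding (L\'evy) measure by reading it off the canonical representation of the generating function, but that is not needed for the corollary itself.
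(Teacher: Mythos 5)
Your argument is exactly the paper's: Lemma~2.1 gives log-convexity of the CMNB p.m.f.\ for $r\le 1$, Steutel's theorem (recalled in Remark~3) upgrades log-convexity to infinite divisibility, and Feller's characterization identifies infinitely divisible lattice distributions on $\{0,1,2,\ldots\}$ with discrete compound Poisson laws. The proposal is correct and follows the same route, with slightly more care than the paper in spelling out the positivity and support hypotheses of the Steutel criterion.
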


Feller's characterization of the discrete infinite divisibility showed that a non-negative integer valued r.v. $X$ is infinitely divisibleiff its distribution is a discrete compound Poisson distribution with probability generating function (in short, p.g.f.):
\begin{equation}\label{eq:dcp}
G(z) = \sum\limits_{k = 0}^\infty  {{p_k}{z^k}}  = {e^{\sum\limits_{i = 1}^\infty  {{\alpha _i}\lambda ({z^i} - 1)} }}{\rm{,(}}\left| z \right| \le {\rm{1) }},
\end{equation}
where $\sum _{i = 1}^\infty {\alpha _i} = 1,{\alpha _i} \ge 0,\lambda  > 0.$

For a theoretical treatment of discrete infinite divisibility (or discrete compound Poisson distribution), we refer readers to section 2 of \cite{steutel03}, section 9.3 of \cite{johnson05}, \cite{zhang16}.

Considering some ${\alpha _i}$ being negative in (\ref{eq:dcp}), it turns into a generalization of the discrete compound Poisson distribution:
\begin{definition}[Discrete pseudo compound Poisson distribution] \label{def:DPCP}
  If a discrete r.v.~$X$ with $ \mathrm{P}(X = k) = p_{k}$, $k\in \Bbb N$, has a
  p.g.f.~of the form
  \begin{equation}
   G(z) = \sum\limits_{k = 0}^\infty  p_{k} z^{k} = \exp \left\{\sum\limits_{i= 1}^{\infty} \alpha_{i} \lambda (z^{i} - 1) \right\},
  \end{equation}
  where $\sum\limits_{i = 1}^{\infty} \alpha_{i} = 1$, $\sum \limits_{i =
    1}^{\infty} \left| \alpha_{i} \right| < \infty$, $\alpha_{i} \in
  \Bbb{R}$, and $\lambda > 0$, then $X$ is said to follow a discrete pseudo compound Poisson distribution, abbreviated as \emph{DPCP}.
\end{definition}

Next, we will give two lemmas on the non-vanishing p.g.f. characterization of DPCP, see \cite{zhang14} and \cite{zhang17}.

\begin{lemma} \label{lem:char-DPCP}
 Let $p_k=\mathrm{P}(X=k)$, for any discrete r.v.~$X$, its p.g.f.~$G(z) = \sum\limits_{k = 0}^\infty  {{p_k}} {z^k}$ has no zeros in $ - 1 \le z \le 1$iff $X$ is DPCP distributed.
\end{lemma}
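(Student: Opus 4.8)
The plan is to prove the two implications separately: the direction ``$X$ DPCP $\Rightarrow$ $G$ non-vanishing'' is immediate, while the reverse direction contains all the work. For the easy direction, suppose $X$ is DPCP, so $G(z)=\exp\{\sum_{i=1}^{\infty}\alpha_i\lambda(z^{i}-1)\}$ with $\sum_{i=1}^{\infty}|\alpha_i|<\infty$. Each term of the exponent is bounded by $2\lambda|\alpha_i|$ on the closed unit disk, so the exponent converges absolutely and uniformly there to a finite complex value; hence $G(z)$, being an exponential, has no zeros for $|z|\le1$, and in particular none on $[-1,1]$.

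For the converse, assume $G$ is zero-free on the closed unit disk (the natural reading of the hypothesis; recall that every p.g.f.\ arising in this paper in fact has radius of convergence strictly larger than $1$, so $G$ is analytic and non-vanishing on an open neighbourhood of $\{|z|\le1\}$). First note that $p_0=G(0)\neq0$; the subcase $p_0=1$ forces $G\equiv1$, a trivial DPCP with $\lambda=0$, so take $0<p_0<1$. Since $G$ is analytic and non-vanishing on a simply connected neighbourhood of the closed disk, it admits an analytic logarithm there; write $\log G(z)=\sum_{n=0}^{\infty}c_n z^{n}$, with real coefficients $c_n$ (because $\log G$ is real on a real neighbourhood of the origin) and with the series absolutely convergent on $|z|\le1$. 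Set $\lambda:=-c_0=-\log p_0>0$ and $\alpha_i:=c_i/\lambda$ for $i\ge1$.

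It then remains to check the two constraints of Definition~\ref{def:DPCP}. Absolute summability is clear: $\sum_{i=1}^{\infty}|\alpha_i|=\lambda^{-1}\sum_{i=1}^{\infty}|c_i|<\infty$. For the normalization, evaluate the series at $z=1$: since $G(1)=\sum_{k\ge0}p_k=1$ we obtain $0=\log G(1)=c_0+\sum_{i=1}^{\infty}c_i$, whence $\sum_{i=1}^{\infty}c_i=-c_0=\lambda$ and therefore $\sum_{i=1}^{\infty}\alpha_i=1$. Reassembling the exponent,
\[
  G(z)=\exp\Big\{c_0+\sum_{i=1}^{\infty}c_i z^{i}\Big\}
      =\exp\Big\{\sum_{i=1}^{\infty}\alpha_i\lambda z^{i}-\lambda\Big\}
      =\exp\Big\{\sum_{i=1}^{\infty}\alpha_i\lambda(z^{i}-1)\Big\},
\]
so $X$ is DPCP.

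The step I expect to be the crux is the passage from ``$G$ zero-free on the closed disk'' to ``$\log G$ has absolutely summable Taylor coefficients''. A general p.g.f.\ is only guaranteed to be analytic inside the open unit disk, and mere non-vanishing on the closed disk need not by itself force $\sum|c_n|<\infty$; positivity on $[-1,1]$ alone is certainly insufficient (for instance $G(z)=(1+z^{2})/2$ is positive on $[-1,1]$, vanishes at $\pm i$, and has a logarithm whose Taylor coefficients are not absolutely summable, so it is not DPCP). The clean way around this, and the one appropriate here, is to invoke that the relevant $G$ extends analytically past $|z|=1$, so that zero-freeness on the closed disk makes $\log G$ analytic on a strictly larger disk and hence its Taylor series absolutely convergent on $|z|\le1$; I would state this standing assumption explicitly. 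The remaining steps are then routine power-series manipulations.
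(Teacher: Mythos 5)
Your forward direction is fine, and your diagnosis of where the difficulty sits is exactly right: the whole content of the converse is getting $\sum_{n\ge 1}|c_n|<\infty$ for the coefficients of $\log G$, and your example $G(z)=(1+z^2)/2$ correctly shows that the hypothesis must be read as ``no zeros in $|z|\le 1$'' rather than on the real segment alone. The problem is your fix. You prove the converse only under the additional standing assumption that $G$ extends analytically past the unit circle, but the lemma is asserted for \emph{any} discrete random variable, and that assumption is simply not available in general (take $p_k\propto 1/k^3$, say, whose p.g.f.\ has radius of convergence exactly $1$). So as written you have proved a strictly weaker statement --- one that does cover the CMNB application, since there the p.g.f.\ has radius of convergence $1/p>1$, but not the lemma itself.

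The missing ingredient is the one the paper names immediately after the statement: the Wiener--L\'evy theorem. Because $\sum_k p_k=1$, the boundary function $F(\theta)=G(e^{i\theta})$ automatically has an absolutely convergent Fourier series, with no analyticity beyond $|z|=1$ required. Since $G$ is zero-free on the closed disk, the curve $C=\{G(e^{i\theta})\}$ avoids the origin and has winding number zero about it (argument principle, using zero-freeness \emph{inside} the disk), so a continuous single-valued branch of $\log$ is regular along $C$; Wiener--L\'evy then yields $\log G(e^{i\theta})=\sum_{k=-\infty}^{\infty}c_k e^{ik\theta}$ with $\sum_k|c_k|<\infty$. One then checks that $c_k=0$ for $k<0$ (because $\log G$ is analytic in the open disk and its boundary values agree with this Fourier series), and from that point your computation --- $\lambda=-c_0=-\log p_0$, $\alpha_i=c_i/\lambda$, evaluation at $z=1$ to get $\sum_i\alpha_i=1$ --- goes through verbatim. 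In short: replace the ad hoc analytic-continuation hypothesis by an appeal to Wiener--L\'evy and the argument closes for the general case, which is precisely the route the paper indicates.
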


The proof of Lemma \ref{lem:char-DPCP} is based on Wiener-L{\'e}vy theorem, which is a sophisticated theorem in Fourier analysis, see \cite{zygmund02}.

\begin{lemma}(L{\'e}vy-Wiener theorem)
Let $F(\theta ) = \sum\limits_{k =  - \infty }^\infty  {{c_k}{e^{ik\theta }}} ,\theta  \in [0,2\pi ]$ be a absolutely convergent Fourier series with $\left\| F \right\| = \sum\limits_{k =  - \infty }^\infty  {\left| {{c_k}} \right|}  < \infty $. The value of $F(\theta )$ lies on a curve $C$, and $H(t)$ is an analytic (not necessarily single-valued) function of a complex variable which is regular at every point of $C$. Then $H[F(\theta )]$ has an absolutely convergent Fourier series.
\end{lemma}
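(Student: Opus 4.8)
The plan is to derive this from the theory of commutative Banach algebras together with the holomorphic functional calculus. Identify $F(\theta)=\sum_{k\in\mathbb{Z}}c_k e^{ik\theta}$ with the coefficient sequence $(c_k)$ in the Wiener algebra $A=A(\mathbb{T})$ of functions on the circle with finite norm $\|F\|=\sum_k|c_k|$, taken with pointwise multiplication. First I would record that $A$ is a unital commutative Banach algebra: completeness is that of $\ell^1(\mathbb{Z})$; submultiplicativity $\|FG\|\le\|F\|\,\|G\|$ holds because the coefficient sequence of $FG$ is the convolution of those of $F$ and $G$; the constant $1$ is the unit; and trigonometric polynomials are dense.

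The crux is to identify the spectrum of the element $F$. The Gelfand spectrum of $A$ is exactly $\mathbb{T}$: every $\theta_0$ gives the multiplicative functional $\delta_{\theta_0}\colon F\mapsto F(\theta_0)$, and conversely any nonzero multiplicative functional $\varphi$ is determined by $z:=\varphi(e^{i\theta})$, where $\|e^{\pm i\theta}\|=1$ forces $|z|\le1$ and $|z|^{-1}=|\varphi(e^{-i\theta})|\le1$, hence $z=e^{i\theta_0}$ and, by density and continuity, $\varphi=\delta_{\theta_0}$. Equivalently this is Wiener's classical $1/F$ theorem: if $F\in A$ never vanishes, then $1/F\in A$. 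It follows that the spectrum of $F$ as an algebra element is its range $F(\mathbb{T})$, which lies on the curve $C$; in particular $(w\cdot 1-F)^{-1}\in A$ for every $w\notin F(\mathbb{T})$, and $w\mapsto(w\cdot 1-F)^{-1}$ is continuous from $\mathbb{C}\setminus F(\mathbb{T})$ into $(A,\|\cdot\|)$.

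Now I would invoke the functional calculus. Since $H$ is regular at every point of $C$, it is holomorphic on an open set containing the compact set $C\supseteq F(\mathbb{T})$; choose a cycle $\Gamma$ in that set, disjoint from $F(\mathbb{T})$ and winding once around every point of $F(\mathbb{T})$ (using finitely many loops, and on a tube around each a single-valued local branch of $H$, to cover the multi-valued case), and set
\[
H(F):=\frac{1}{2\pi i}\oint_\Gamma H(w)\,(w\cdot 1-F)^{-1}\,dw .
\]
The integrand is a continuous $A$-valued function on the compact set $\Gamma$, so the integral converges in $\|\cdot\|$ and $H(F)\in A$. Applying the continuous multiplicative functional $\delta_\theta$ and using the scalar Cauchy integral formula yields $\delta_\theta\bigl(H(F)\bigr)=\frac{1}{2\pi i}\oint_\Gamma \frac{H(w)}{w-F(\theta)}\,dw=H\bigl(F(\theta)\bigr)$ for every $\theta$. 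Hence the element $H(F)\in A$ is precisely the function $\theta\mapsto H[F(\theta)]$, which therefore has an absolutely convergent Fourier series, as claimed.

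The routine parts are the algebra axioms and the contour-integral manipulation; the real obstacle is the spectrum computation in the middle step, i.e. Wiener's lemma that a nowhere-vanishing element of $A$ is invertible in $A$. One may prove it by Gelfand theory as above, or by an elementary localization argument (approximate $F$ in $\|\cdot\|$ by a nonvanishing trigonometric polynomial, invert the polynomial via geometric series of the form $1/(e^{i\theta}-\alpha)$ with $|\alpha|\neq1$, and patch local inverses with a smooth partition of unity); the latter route hinges on a delicate estimate for the $A$-norm of a function localized to a short arc, and that is the technical heart of the matter.
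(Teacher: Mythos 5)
The paper does not actually prove this lemma: it is quoted as a classical result and referred to Zygmund's \emph{Trigonometric Series}, so there is no in-paper argument to match yours against. Your proof via the Wiener algebra $A(\mathbb{T})$, the identification of its Gelfand spectrum with the circle, and the holomorphic functional calculus is the standard modern route and is essentially correct: the algebra axioms, the computation $\sigma_A(F)=F(\mathbb{T})$ (equivalently Wiener's $1/F$ lemma), and the Cauchy-integral definition of $H(F)$ followed by evaluation under the characters $\delta_\theta$ all go through as you describe. This is genuinely different from the proof in the cited source, which is elementary and works by localization in $\theta$: one expands $H$ in a power series around $F(\theta_0)$, controls the $A$-norm of $(F-F(\theta_0))$ restricted to a short arc, and patches with a partition of unity in $A$ --- essentially the "technical heart" you relegate to the alternative proof of Wiener's lemma. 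The Gelfand route buys conceptual clarity and brevity at the cost of Banach-algebra machinery; the classical route is self-contained and, importantly, handles the multi-valued case natively. That is the one place your argument is thin: the global contour formula $\frac{1}{2\pi i}\oint_\Gamma H(w)(w\cdot 1-F)^{-1}\,dw$ requires a single-valued holomorphic $H$ on a neighborhood of the spectrum, and for a genuinely multi-valued $H$ that is "regular along $C$" you cannot in general select consistent branches on tubes around the components of $\Gamma$; the correct fix is to localize in $\theta$ (cover $[0,2\pi]$ by arcs on which $F$ stays in a disc where a branch of $H$ is single-valued, apply your functional calculus there, and glue with a partition of unity in $A$), which is exactly Zygmund's device. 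For the single-valued case --- which is all the paper ever uses, since Lemma 2.2 applies it to $H(t)=\log t$ on a curve avoiding the origin with an unambiguous branch fixed by $G(1)=1$ --- your proof is complete.
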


\begin{lemma}\label{lem:mnt}
For any discrete r.v. $X$ with p.g.f. $G(z) = \sum\limits_{k = 0}^\infty  {{p_k}} {z^k},(\left| z \right| \le 1)$. If ${p_0} > {p_1} > {p_2} >  \cdots$ , then $X$ is DPCP distributed.
\end{lemma}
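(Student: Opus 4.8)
The plan is to reduce the claim to the non-vanishing criterion of Lemma~\ref{lem:char-DPCP} and then verify that criterion by an Enestr\"om--Kakeya type estimate. By Lemma~\ref{lem:char-DPCP}, $X$ is DPCP distributed as soon as its p.g.f.\ $G(z)=\sum_{k=0}^{\infty}p_k z^k$ has no zero with $|z|\le 1$ (in particular none in $[-1,1]$). Note first that $\sum_{k=0}^{\infty}p_k=1<\infty$, so the series for $G$ converges absolutely on the closed unit disk and all rearrangements below are legitimate; moreover $p_k\to 0$.

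The key step is to multiply by $1-z$ so that the coefficients telescope. Put $b_k:=p_{k-1}-p_k$ for $k\ge 1$; the hypothesis $p_0>p_1>p_2>\cdots$ gives $b_k>0$ for every $k$, and $\sum_{k=1}^{\infty}b_k=\lim_{n\to\infty}(p_0-p_n)=p_0$. A direct computation gives
\[
(1-z)\,G(z)=p_0-\sum_{k=1}^{\infty}b_k z^k .
\]
For $|z|\le 1$ with $z\ne 1$ one has $\bigl|\sum_{k=1}^{\infty}b_k z^k\bigr|\le\sum_{k=1}^{\infty}b_k|z|^k\le\sum_{k=1}^{\infty}b_k=p_0$, and the inequality is in fact strict: if $|z|<1$ this is already clear since every $b_k>0$, while if $|z|=1$ and $z\ne 1$, equality in the triangle inequality would force $z^k$ to be independent of $k\ge 1$, hence $z=z^2$, i.e.\ $z=1$, a contradiction. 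Therefore $(1-z)G(z)\ne 0$, hence $G(z)\ne 0$, for every $z$ in the closed unit disk with $z\ne 1$; and $G(1)=\sum_{k=0}^{\infty}p_k=1\ne 0$. Thus $G$ has no zeros on $|z|\le 1$, and Lemma~\ref{lem:char-DPCP} yields that $X$ is DPCP distributed.

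The only point requiring care is the strict triangle inequality on the boundary circle; I would justify it by recalling that equality in $|\sum_k a_k|\le\sum_k|a_k|$ demands a common unimodular factor $u$ with $a_k=|a_k|u$ for all $k$, so here $z^k=u$ for all $k\ge 1$, which already for $k=1,2$ forces $z=1$. Everything else is a routine telescoping computation, so this estimate is the crux of the argument. (Alternatively, for the real version of Lemma~\ref{lem:char-DPCP} one can bypass the disk entirely: $G(t)\ge p_0>0$ for $t\in[0,1]$, and for $t\in[-1,0]$ pairing consecutive terms gives $G(t)=\sum_{j\ge 0}t^{2j}\bigl(p_{2j}+p_{2j+1}t\bigr)>0$ since $p_{2j}+p_{2j+1}t\ge p_{2j}-p_{2j+1}>0$.)
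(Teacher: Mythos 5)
Your proof is correct and follows essentially the same route as the paper's: multiply by $1-z$, telescope the coefficients into the positive differences $p_{k-1}-p_k$, bound the resulting series to exclude zeros of $G$ on the closed unit disk, and invoke Lemma~\ref{lem:char-DPCP}. The only difference is cosmetic — you rule out all boundary points $|z|=1$, $z\ne 1$, via the equality case of the triangle inequality, whereas the paper checks only $z=\pm 1$ directly (which suffices for its real-interval form of the criterion) — so the two arguments are substantively identical.
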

\begin{proof}
First, we show that $G(z)$ has no zeros in $\left| z \right| < 1$, since
\begin{align*}
\left| {(1 - z)G(z)} \right| &=
\left| {{p_0} - ({p_0} - {p_1})z - ({p_1} - {p_2}){z^2} +  \cdots } \right| \\
&\ge {p_0} - \left| {({p_0} - {p_1}) |z| + ({p_1} - {p_2})| {{z^2}} | +  \cdots } \right|\\
&> {p_0} - \left| {({p_0} - {p_1})  + ({p_1} - {p_2}) +  \cdots } \right| =p_0 - |p_0|=0.
\end{align*}
And notice that $G(1) = 1,G( - 1) = {p_0} - {p_1} + {p_2} - {p_3} +  \cdots  > 0$, so $z =  \pm 1$ are not zeros point.
\end{proof}
The condition in the next corollary is weaker than that of Corollary \ref{cor:id}, and the result (DPCP) is also weaker than Corollary \ref{cor:id} (DCP).

\begin{corollary}
The CMNB distribution (\ref{eq:cnbd}) is discrete pseudo compound Poisson distribution if $(p{r^\nu } < 1, r > 1)$ or $(r \le 1)$.
\end{corollary}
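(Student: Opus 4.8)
The plan is to treat the two parameter regimes separately, each time reducing to a fact already established in the excerpt. When $r \le 1$, there is nothing new to prove: Corollary~\ref{cor:id} already gives that the CMNB law is discrete compound Poisson, and every discrete compound Poisson distribution is a fortiori discrete pseudo compound Poisson (take all the $\alpha_i$ nonnegative in Definition~\ref{def:DPCP}), so this case is subsumed.

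For the regime $r > 1$ and $p r^\nu < 1$, I would invoke Lemma~\ref{lem:mnt}: if the p.m.f. satisfies $p_0 > p_1 > p_2 > \cdots$, then $X$ is DPCP. Hence it suffices to verify that, under these constraints, the CMNB probabilities are strictly decreasing. From the recursive formula~(\ref{eq:rf}),
$$\frac{p_k}{p_{k-1}} = p\left(1 + \frac{r-1}{k}\right)^{\nu}, \qquad k \ge 1.$$
Because $r > 1$, the base $1 + \frac{r-1}{k}$ is positive and strictly decreasing in $k$, so (as $\nu > 0$) this ratio is largest at $k = 1$, where it equals $p\,r^{\nu}$. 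Therefore $p_k / p_{k-1} \le p\,r^{\nu} < 1$ for every $k \ge 1$, which is exactly the strict monotonicity hypothesis of Lemma~\ref{lem:mnt}, and the conclusion follows.

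I do not expect any genuine obstacle here: the argument is a one-line monotonicity estimate on the ratio~(\ref{eq:rf}) feeding directly into Lemma~\ref{lem:mnt}, combined with the trivial implication DCP~$\Rightarrow$~DPCP. The only point worth stressing is that the condition $p\,r^{\nu} < 1$ is precisely calibrated to push the supremum of the consecutive ratios, attained at $k=1$, strictly below $1$; and since $\{r>1,\ p r^\nu<1\}\cup\{r\le 1\}$ strictly enlarges the region $\{r \le 1\}$ of Corollary~\ref{cor:id}, this is consistent with the remark that the present corollary trades a weaker hypothesis for the weaker (DPCP rather than DCP) conclusion.
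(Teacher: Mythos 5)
Your argument is correct and coincides with the paper's own proof: the case $r\le 1$ is handled by citing Corollary~\ref{cor:id} together with the trivial implication DCP $\Rightarrow$ DPCP, and the case $r>1$, $pr^{\nu}<1$ is handled by observing that the consecutive ratio $p\left(\tfrac{k-1+r}{k}\right)^{\nu}$ is decreasing in $k$ with maximum $pr^{\nu}$ at $k=1$, so Lemma~\ref{lem:mnt} applies. No differences worth noting.
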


\begin{proof}
 On the one hand, $r \le 1$ deduces that CMNB belongs to discrete compound Poisson by Corollary \ref{cor:id},
 hence CMNB is discrete pseudo compound Poisson. On the other hand, by using \textbf{Lemma \ref{lem:mnt}}, we need to guarantee that $\frac{{\mathrm{P}(X = k)}}{{\mathrm{P}(X = k - 1)}} = p{(\frac{{k - 1 + r}}{k})^\nu } < 1$ for $k = 1,2, \cdots $. The $p(\frac{{k - 1 + r}}{k})^\nu $ is a decreasing function with respect to $k$ when $r > 1$, $\frac{{\mathrm{P}(X = k)}}{{\mathrm{P}(X = k - 1)}}$ reaches its maximum $p{r^\nu } $ as $k=1$. So $p{r^\nu } < 1$ is the other case.
\end{proof}

Applying the recurrence relation (L{\'e}vy-Adelson-Panjer recursion) of p.m.f. of DPCP distribution, see Remark 1 in \cite{zhang14}
 \[{P_{n + 1}} = \frac{\lambda }{{n + 1}}[{\alpha _1}{P_n} + 2{\alpha _2}{P_{n - 1}} +  \cdots  + (n + 1){\alpha _{n + 1}}{P_0}],\quad ({P_0} = {e^{ - \lambda }},n = 0,1, \cdots )\]
 and ${P_k} = \mathrm{P}(X = k) = {(\frac{{\Gamma (r + k)}}{{k!\Gamma (r)}})^\nu }{p^k}{P_0}$, then the DPCP parametrization $(\lambda ,{\alpha _1},{\alpha _2}, \cdots )$ of CMNB is determined by the following system of equations:
\[{(\frac{{\Gamma (r + n + 1)}}{{(n + 1)!}})^\nu }{p^{n + 1}} = \frac{\lambda }{{n + 1}}[{\alpha _1}{(\frac{{\Gamma (r + n)}}{{n!}})^\nu }{p^n} + 2{\alpha _2}{(\frac{{\Gamma (r + n - 1)}}{{(n - 1)!}})^\nu }{p^{n - 1}} +  \cdots  + (n + 1){\alpha _{n + 1}}],(n = 0,1, \cdots ),\]
where $\lambda  = \log {P_0}$.

\subsection{Overdispersion and underdispersion}
In statistics, for a given random sample $X$, overdispersion means that $\mathrm{E}[X]<\mathrm{Var}(X)$. Conversely, underdispersion means that $\mathrm{E}[X]>\mathrm{Var}(X)$. Moreover, equal-dispersion means that $\mathrm{E}[X]=\mathrm{Var}(X)$. \cite{gomez11} summerized the phenomena of insurance count claims data, which were characterized by two features: (i) Overdispersion, i.e., the variance is greater than the mean; (ii) Zero-inflated, i.e. the presence of a high percentage of zero values in the empirical distribution.

The CMNB distribution belongs to the family of weighted Poisson distribution (see \cite{kokonendji08}) with p.m.f.
\begin{equation} \label{eq:wpd}
\mathrm{P}(X = k) = \frac{{w(k)}}{{{\rm{E}}[w(X)]}} \cdot \frac{{{\theta ^k}}}{{k!}}{e^{ - \theta}},
\end{equation}
where $w(k)$ is a non-negative weighted function.

Then weighted Poisson representation of CMNB distribution is
\[\mathrm{P}(X = k) = \frac{{{{(1 - p)}^r}{e^p}}}{{C(r,\nu ,p)}}{[\Gamma (1 + k)]^{1 - \nu }}{[\Gamma (r + k)]^\nu }\frac{{{p^k}}}{{k!}}{e^{ - p}},\quad (k = 0,1,2, \ldots ).\]

Therefore, CMNB distribution in (\ref{eq:cnbd}) can be seen as a weighted Poisson distribution with weighted function
\begin{equation} \label{eq:weight}
f(k,r,\nu ) = w(k) = {[\Gamma (1 + k)]^{1 - \nu }}{[\Gamma (r + k)]^\nu }.
\end{equation}

Theorem 3 and its corollary in \cite{kokonendji08} provide an ``iff" condition to prove overdispersion and underdispersion of the weighted Poisson distribution.

\begin{lemma}\label{lem:wf}
 Let $X$ be a weighted Poisson random variable with mean $\theta >0$, and let $w(k),k \in {\rm{N}}$ be a weighted function not depending on $\theta$. Then, weighted function $k \mapsto w(k)$ is logconvex (logconcave) iff the weighted version $X_w$ of $X$ is overdispersed (underdispersed).
\end{lemma}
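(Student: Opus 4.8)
The plan is to reduce the dispersion comparison to a single scalar inequality in the weight function and then show that inequality is governed exactly by the log-convexity/log-concavity of $w$. Write $X\sim\mathrm{Poisson}(\theta)$ and $W_j:=\E[w(X+j)]=e^{-\theta}\sum_{k\ge 0}\frac{\theta^k}{k!}w(k+j)$. First I would record the Poisson shift identities $\E[X\,w(X)]=\theta\,\E[w(X+1)]$ and $\E[X(X-1)w(X)]=\theta^2\,\E[w(X+2)]$. Combined with the weighted-Poisson p.m.f. (\ref{eq:wpd}), these give $\E[X_w]=\theta W_1/W_0$ and $\E[X_w(X_w-1)]=\theta^2 W_2/W_0$, so a short computation yields
\[
\mathrm{Var}(X_w)-\E[X_w]=\frac{\theta^2}{W_0^2}\bigl(W_0W_2-W_1^2\bigr).
\]
Hence $X_w$ is overdispersed (resp. underdispersed) iff $W_0W_2\ge W_1^2$ (resp. $\le$), i.e. iff the three-term sequence $j\mapsto W_j$ is log-convex (resp. log-concave) at $j=1$.

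The second step is to propagate log-convexity from $w$ to $(W_j)_j$. If $w$ is log-convex then for each fixed $k$ the sequence $j\mapsto w(k+j)$ is log-convex, and $W_j$ is a nonnegative linear combination (with Poisson weights) of these sequences. A convergent sum of log-convex functions is again log-convex: writing $m=(x+y)/2$ and using $f_i(x)f_i(y)\ge f_i(m)^2$ together with Cauchy–Schwarz, $\sqrt{f_1(x)f_1(y)}+\sqrt{f_2(x)f_2(y)}\ge f_1(m)+f_2(m)$, so $(f_1+f_2)(x)(f_1+f_2)(y)\ge (f_1(m)+f_2(m))^2$; iterating and passing to the limit handles the full series. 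Therefore $(W_j)$ is log-convex, $W_0W_2\ge W_1^2$, and $X_w$ is overdispersed. The log-concave case is entirely symmetric, the same Cauchy–Schwarz step simply reversing. This already covers the ``if'' directions, which is all the CMNB application via the weight (\ref{eq:weight}) actually uses.

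For the converse implications it is cleanest to pass through the identity $\mathrm{Var}(X_w)=\theta\,\frac{d}{d\theta}\E[X_w]$, obtained from $W_0'=W_1-W_0$ and $W_1'=W_2-W_1$. This shows overdispersion at $\theta$ is equivalent to $\theta\mapsto W_1/W_0=\E[\,w(X_w+1)/w(X_w)\,]$ being nondecreasing there. Since the weighted-Poisson family $\{X_w^{(\theta)}\}_\theta$ has monotone likelihood ratio in $\theta$ (the ratio of two such p.m.f.'s is $\propto(\theta_2/\theta_1)^k$), it is stochastically increasing in $\theta$; so if $w$ is \emph{not} log-convex, meaning $g(k):=w(k+1)/w(k)$ strictly decreases across some $k_0$, choosing $\theta$ for which $X_w^{(\theta)}$ concentrates on $\{k_0,k_0+1\}$ makes $\theta\mapsto\E[g(X_w^{(\theta)})]$ strictly decreasing near that $\theta$, so $X_w$ is not overdispersed there; dually for log-concavity. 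The main obstacle is precisely this localization argument — controlling where the weighted-Poisson mass sits as $\theta$ varies — whereas the forward directions are routine; alternatively, one may simply invoke Theorem 3 and its corollary of \cite{kokonendji08}.
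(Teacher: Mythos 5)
First, a point of reference: the paper does not actually prove this lemma --- it is imported as Theorem 3 (and its corollary) of Kokonendji, Mizère and Balakrishnan (2008) --- so your attempt is being measured against a citation rather than an in-paper argument. Your reduction is correct and is the standard one: the shift identities give $\E[X_w]=\theta W_1/W_0$ and $\E[X_w(X_w-1)]=\theta^2 W_2/W_0$, hence $\mathrm{Var}(X_w)-\E[X_w]=\theta^2(W_0W_2-W_1^2)/W_0^2$, and the ``log-convex $\Rightarrow$ overdispersed'' half follows from Cauchy--Schwarz exactly as you say.

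The genuine gap is the sentence claiming the log-concave case is ``entirely symmetric, the same Cauchy--Schwarz step simply reversing.'' It does not reverse: Cauchy--Schwarz gives $\bigl(\sum_k a_k\sqrt{w(k)w(k+2)}\bigr)^2\le W_0W_2$ in both cases, which is the useful direction only for log-convexity, and a nonnegative combination of log-concave sequences is in general \emph{not} log-concave, so log-concavity cannot be ``propagated'' from $w$ to $(W_j)_j$ the way log-convexity can. (Symmetrizing $W_0W_2-W_1^2=\tfrac12\sum_{j,k}a_ja_k\,[\,w(j)w(k+2)+w(k)w(j+2)-2w(j+1)w(k+1)\,]$ shows the off-diagonal terms have no definite sign under log-concavity alone.) The repair is already sitting in your third paragraph: with $g(k)=w(k+1)/w(k)$ one has $W_1/W_0=\E_w[g(X_w^{(\theta)})]$ and $\mathrm{Var}(X_w)-\E[X_w]=\theta^2\frac{d}{d\theta}(W_1/W_0)$; since the family $\{X_w^{(\theta)}\}_\theta$ is stochastically increasing in $\theta$ (monotone likelihood ratio), a monotone $g$ --- non-decreasing when $w$ is log-convex, non-increasing when $w$ is log-concave --- makes this derivative $\ge 0$ resp.\ $\le 0$, proving \emph{both} forward implications at once. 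That argument should replace the Cauchy--Schwarz paragraph rather than supplement it; it is essentially the proof in the cited reference. Finally, your converse is only a sketch: a weighted Poisson never concentrates on $\{k_0,k_0+1\}$, so deducing that a single failure of monotonicity of $g$ forces $\frac{d}{d\theta}\E_w[g(X_w^{(\theta)})]<0$ somewhere requires a genuine localization estimate that you have not supplied (though, as you note, the CMNB application in Theorem 2.1 only uses the forward directions, and the paper itself delegates the whole ``iff'' to Kokonendji et al.).
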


\cite{kokonendji08} applied it to show that COM-Poisson distribution is overdispersion if $\nu  < 1$ and underdispersion if $\nu  > 1$. We employ their methods to get a criterion for overdispersion or underdispersion of CMNB distribution.

\begin{theorem}\label{thm:wpd}
Set ${\Delta _k} = \sum\limits_{i = 0}^\infty  {(\frac{{1 - \nu }}{{{{(i + k + 1)}^2}}} + \frac{\nu }{{{{(i + k + r)}^2}}})} $.
The COM-nagative binomial distribution (\ref{eq:cnbd}) is overdispersion if ${\Delta _k} > 0 $ $(\forall k \in {\rm{N)}}$ and  underdispersion if ${\Delta _k} < 0$ $(\forall k \in {\rm{N)}}$.
\end{theorem}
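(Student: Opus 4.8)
The plan is to read off the criterion from Lemma~\ref{lem:wf} applied to the weighted Poisson representation of the CMNB distribution. Recall from~(\ref{eq:weight}) that $X\sim\mathrm{CMNB}(r,\nu,p)$ is a weighted Poisson variable with weight function $w(k)=[\Gamma(1+k)]^{1-\nu}[\Gamma(r+k)]^{\nu}$, which does not depend on the Poisson mean $\theta=p$. By Lemma~\ref{lem:wf}, CMNB is overdispersed iff $k\mapsto w(k)$ is log-convex on $\mathbb{N}$ and underdispersed iff it is log-concave, so the whole problem reduces to a sign analysis of the second differences of $\ln w$.

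The second step is to regard $w$ as a function of a real variable $x\ge 0$ via the same Gamma formula and set $\phi(x)=\ln w(x)=(1-\nu)\ln\Gamma(1+x)+\nu\ln\Gamma(r+x)$. Differentiating twice and using the trigamma function $\psi'$ with its series expansion $\psi'(z)=\sum_{i=0}^{\infty}(z+i)^{-2}$ gives
\[
\phi''(x)=(1-\nu)\psi'(1+x)+\nu\,\psi'(r+x)=\sum_{i=0}^{\infty}\left(\frac{1-\nu}{(i+x+1)^{2}}+\frac{\nu}{(i+x+r)^{2}}\right),
\]
so $\phi''(k)=\Delta_k$ for every $k\in\mathbb{N}$. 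This identifies $\Delta_k$ with the curvature of $\ln w$ at the integer $k$ and explains the form of the statement.

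The third step converts information about $\phi''$ into log-convexity of $w$ on $\mathbb{N}$. I would use the exact identity
\[
\ln\frac{w(k+1)\,w(k-1)}{w(k)^{2}}=\phi(k+1)-2\phi(k)+\phi(k-1)=\int_{0}^{1}\!\!\int_{0}^{1}\phi''(k-1+u+v)\,du\,dv,
\]
so the sign of each discrete second difference is governed by the sign of $\phi''$ on $[k-1,k+1]$. If $\Delta_k>0$ for all $k$, one then shows $\phi''>0$ throughout $[0,\infty)$: when $\nu\le 1$ or $r\le 1$ every summand of $\phi''$ is positive and this is immediate, while in the remaining regime $\nu>1,\ r>1$ one controls $\phi''$ between consecutive integers, e.g.\ through the telescoping relation $\Delta_k-\Delta_{k+1}=h(k)$ with $h(t)=\frac{1-\nu}{(t+1)^2}+\frac{\nu}{(t+r)^2}$ (equivalently through the function $g(t)=\frac{1-\nu}{t+1}+\frac{\nu}{t+r}$, whose integral over $[k-1,k]$ equals the $k$-th second difference), or by a monotonicity estimate bounding $\phi''(x)$ on $[k,k+1]$ in terms of $\Delta_k$ and $\Delta_{k+1}$. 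Once $\phi''\ge 0$ on $[0,\infty)$, the displayed identity makes $w$ log-convex, and Lemma~\ref{lem:wf} gives overdispersion; the case $\Delta_k<0$ for all $k$ is entirely symmetric and yields underdispersion.

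The main obstacle is precisely this last bridging step: the hypothesis pins down $\phi''$ only at integer points, whereas log-convexity of $w$ needs the sign of $\phi''$ on whole intervals $[k-1,k+1]$. The cleanest way to close the gap is to prove that $\phi''$ cannot switch sign strictly between two consecutive integers at which it is positive (resp.\ negative), which rests on a convexity or monotonicity property of $x\mapsto\phi''(x)$; the rest of the argument is routine bookkeeping.
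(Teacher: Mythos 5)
Your proposal follows essentially the same route as the paper: the weighted--Poisson representation with weight $w(k)=[\Gamma(1+k)]^{1-\nu}[\Gamma(r+k)]^{\nu}$, Lemma~\ref{lem:wf} (Kokonendji et al.), and the trigamma series giving $\frac{d^2}{dk^2}\log w(k)=\Delta_k$, which is exactly the paper's entire argument. The ``bridging'' issue you flag in your third step --- that $\Delta_k>0$ only pins down $\phi''$ at integer points while discrete log-convexity needs its sign on whole intervals --- is a legitimate concern, but the paper does not address it at all (it stops at $\phi''(k)=\Delta_k$ and declares the proof complete), so your extra care via the identity $\phi(k+1)-2\phi(k)+\phi(k-1)=\int_0^1\!\int_0^1\phi''(k-1+u+v)\,du\,dv$ goes beyond, rather than diverges from, the published argument.
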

\begin{proof}
Function $f(x)$ is logconvex~(logconcave) if $\frac{{{d^2}\log f(x)}}{{d{x^2}}}> 0( < 0)$. Followed by the formula of logarithmic second derivative of Gamma function~(see p54 of \cite{temme11}), $\frac{{{d^2}\log \Gamma (x)}}{{d{x^2}}} = \sum\limits_{i = 0}^\infty  {\frac{1}{{{{(x + i)}^2}}}}$,  we have
\[\frac{{{{d}^2}\log f(k,r,\nu )}}{{{d}{k^2}}} = \frac{{(1 - \nu ){{d}^2}\log \Gamma (k + 1)}}{{{d}{k^2}}} + \frac{{\nu {{d}^2}\log \Gamma (k + r)}}{{{d}{k^2}}} = \sum\limits_{i = 0}^\infty  {(\frac{{1 - \nu }}{{{{(i + k + 1)}^2}}} + \frac{\nu }{{{{(i + k + r)}^2}}})}, (\forall k \in {\rm{N}}).\]
Applying Lemma \ref{lem:wf}, the proof is complete.
\end{proof}

Then, the results of overdispersion can be easily obtained by Theorem \ref{thm:wpd}.

\begin{corollary}
In these two cases: 1. $\nu  > 0,r < 1$; 2. $\nu  < 1,r \in {{\rm{R}}^ + }$. CMNB distribution is overdispersion.
\end{corollary}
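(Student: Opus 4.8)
The plan is to invoke Theorem \ref{thm:wpd} and, in each of the two cases, verify that the quantity
$\Delta_k = \sum_{i=0}^{\infty}\bigl(\frac{1-\nu}{(i+k+1)^2}+\frac{\nu}{(i+k+r)^2}\bigr)$
is strictly positive for every $k\in\mathbb{N}$; overdispersion then follows at once from the theorem. The whole argument reduces to a sign analysis of the general summand
\[
s_{i,k}:=\frac{1-\nu}{(i+k+1)^2}+\frac{\nu}{(i+k+r)^2},\qquad i\in\mathbb{N}.
\]

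First I would treat Case~2, i.e. $0<\nu<1$ and $r\in(0,\infty)$. Here $1-\nu>0$ and $\nu>0$, while both denominators $(i+k+1)^2$ and $(i+k+r)^2$ are positive; hence $s_{i,k}>0$ for all $i$, so $\Delta_k=\sum_{i=0}^{\infty}s_{i,k}>0$ and Theorem \ref{thm:wpd} gives overdispersion.

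Next I would handle Case~1, $\nu>0$ and $0<r<1$. The subrange $0<\nu<1$ is already covered by Case~2, so the only point needing attention is $\nu\ge 1$, where the coefficient $1-\nu$ is nonpositive and one cannot argue term-by-term positivity of the two summands separately. The key observation is that $r<1$ forces $i+k+r<i+k+1$, so $(i+k+r)^{-2}>(i+k+1)^{-2}$; since $\nu>0$ this yields
\[
s_{i,k}>\frac{1-\nu}{(i+k+1)^2}+\frac{\nu}{(i+k+1)^2}=\frac{1}{(i+k+1)^2}>0,
\]
whence $\Delta_k>0$ and, again by Theorem \ref{thm:wpd}, the CMNB distribution is overdispersed.

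There is essentially no obstacle here: once Theorem \ref{thm:wpd} has reduced the claim to the positivity of $\Delta_k$, the remaining work is an elementary comparison of the two reciprocal-square terms. The only subtlety worth flagging in the writeup is that in Case~1 the bound must be obtained by first estimating $\frac{\nu}{(i+k+r)^2}$ from below by $\frac{\nu}{(i+k+1)^2}$ (using $r<1$) and only then combining with the negative term $\frac{1-\nu}{(i+k+1)^2}$.
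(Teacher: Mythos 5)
Your proof is correct and is exactly the verification the paper has in mind: the paper states that the corollary is "easily obtained" from Theorem~2.1 and leaves the sign analysis of $\Delta_k$ to the reader, and your term-by-term positivity in Case~2 together with the comparison $(i+k+r)^{-2}>(i+k+1)^{-2}$ for $r<1$ in Case~1 is precisely that omitted computation.
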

{\noindent \textbf{Remark 4}}: The result of case 2 ($\nu  < 1,r \in {{\rm{R}}^ + }$) can be also obtained from Corollary \ref{cor:id} and overdispersion of discrete compound Poisson distribution (equivalently, the discrete infinitely divisible).

\subsection{Stochastic ordering}

Stochastic ordering is the concept of one r.v. $X$ neither stochastically greater than, less than nor equal to another r.v. $Y$. There are plenty types of stochastic orders, which have various applications in risk theory. Firstly, we present 4 different definitions for discrete r.v.: usual stochastic order, likelihood ratio order, hazard rate order and mean residual life order.

1. $X$ is stochastically less than $Y$ in usual stochastic order (denoted by $X{ \le _{st}}Y$) if ${S_X}(n) \ge {S_Y}(n)$ for all $n$, where ${S_X}(n) = \mathrm{P}(X \ge n) = \sum\limits_{k = n}^\infty  {{p_k}} $ is the survival function $X$ of with p.m.f. $p_k$ .

2. $X$ is stochastically less than $Y$ in likelihood ratio order (denoted by $X{ \le _{lr}}Y$) if $\frac{{g(n)}}{{f(n)}}$ increases in $n$ over the union of the supports of $X$ and $Y$, where $f(n)$ and $g(n)$ denotes the p.m.f. of $X$ and $Y$, respectively.

3. $X$ is stochastically less than $Y$ in hazard rate order (denoted by $X{ \le _{hr}}Y$) if ${r_X}(n) \ge {r_Y}(n)$ for all $n$, where the hazard function of a discrete r.v. $X$ with p.m.f. $p_k$ is defined as $r_X(n) = {p_n}/\sum\limits_{k= n}^\infty  {{p_k}}$.

4. $X$ is stochastically less than $Y$ in mean residual life order~(denoted by $X{ \le _{MLR}}Y$) if ${\mu _X}(n) \ge {\mu _Y}(n)$ for all $n$, where the mean residual life function of a discrete r.v. $X$ with p.m.f. $p_k$ is defined as ${\mu _X}(n) = {\rm{E}}(X - n\left| {X \ge n} \right.) = \sum\limits_{k= n}^\infty  {k{p_k}} /\sum\limits_{k= n}^\infty  {{p_k}}  - n$.

The relationship among the above four stochastic ordering are $X{ \le _{lr}}Y \Rightarrow X{ \le _{hr}}Y \Rightarrow X{ \le _{MLR}}Y$\\
(see Theorem 1.C.1 of \cite{shaked07}) and $X{ \le _{hr}}Y \Rightarrow X{ \le _{st}}Y$ (see Theorem 1.B.1 of \cite{shaked07}).

\cite{gupta14} gave the stochastic ordering between COM-Poisson r.v. $X$ and Poisson distributed r.v. $Y$ with same parameter $\lambda $ in (\ref{eq:com}), that is $X{ \le _{lr}}Y$,  therefore $X{ \le _{st}}Y$, $X{ \le _{hr}}Y$ and $X{ \le _{MLR}}Y$. In the following result we will show that CMNB distribution also has some stochastic ordering properties.
\begin{theorem}\label{thm:so}
Let $X$ and $Y$ be two r.vs following CMNB distribution with parameters $(r,{\nu}_1 ,p)$ and $(r,{\nu}_2 ,p)$, respectively. If ${\nu _1} \le {\nu _2},r \ge 1$, then $X{ \le _{lr}}Y$, hence $X{ \le _{st}}Y$, $X{ \le _{hr}}Y$ and $X{ \le _{MLR}}Y$.
\end{theorem}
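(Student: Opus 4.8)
The plan is to prove the likelihood ratio ordering $X \le_{lr} Y$ directly from its definition and then read off the other three orderings from the standard implications quoted just above the theorem. Write $f(n)$ and $g(n)$ for the p.m.f.'s of $X$ and $Y$ given by (\ref{eq:cnbd}) with parameters $(r,\nu_1,p)$ and $(r,\nu_2,p)$. Both are strictly positive on all of $\mathrm{N}$ (since $p\in(0,1)$ and the Gamma ratios are positive), so the two supports coincide and it suffices to check that $n\mapsto g(n)/f(n)$ is nondecreasing on $\mathrm{N}$.

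First I would compute this ratio. In $g(n)/f(n)$ the factor $p^n(1-p)^r$ cancels and the two normalization constants contribute only a factor not depending on $n$, so
\[
\frac{g(n)}{f(n)}=\frac{C(r,\nu_1,p)}{C(r,\nu_2,p)}\left(\frac{\Gamma(r+n)}{n!\,\Gamma(r)}\right)^{\nu_2-\nu_1}.
\]
Thus monotonicity of $g/f$ is equivalent to monotonicity of $n\mapsto a_n^{\,\nu_2-\nu_1}$, where $a_n:=\Gamma(r+n)/(n!\,\Gamma(r))$. Next I would note that $a_{n+1}/a_n=(r+n)/(n+1)$ — exactly the computation already appearing in (\ref{eq:rf}) — which is $\ge 1$ if and only if $r\ge 1$. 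Hence under the hypothesis $r\ge 1$ the sequence $(a_n)_{n\ge 0}$ is nondecreasing and nonnegative, and since $\nu_2-\nu_1\ge 0$, the sequence $(a_n^{\,\nu_2-\nu_1})$ is nondecreasing as well. Therefore $g(n)/f(n)$ is nondecreasing in $n$, i.e. $X\le_{lr}Y$.

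Finally, I would invoke the cited general facts: $X\le_{lr}Y\Rightarrow X\le_{hr}Y\Rightarrow X\le_{MLR}Y$ (Theorem 1.C.1 of \cite{shaked07}) and $X\le_{hr}Y\Rightarrow X\le_{st}Y$ (Theorem 1.B.1 of \cite{shaked07}), which immediately give the remaining three orderings. I do not expect a genuine obstacle here; the only two points requiring a little care are that the $\nu$-dependent normalization constant $C(r,\nu,p)$ enters $g/f$ purely multiplicatively and so cannot affect monotonicity, and that the assumption $r\ge 1$ is precisely what makes $a_n$ monotone — without it (i.e. $r<1$) one has $a_{n+1}/a_n<1$ for all $n$, so the same argument would instead yield the reverse order $Y\le_{lr}X$.
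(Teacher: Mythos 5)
Your proof is correct and follows essentially the same route as the paper: both compute $\mathrm{P}(Y=n)/\mathrm{P}(X=n)=\bigl(\Gamma(r+n)/(n!\,\Gamma(r))\bigr)^{\nu_2-\nu_1}\,C(r,\nu_1,p)/C(r,\nu_2,p)$ and deduce monotonicity from the consecutive ratio $(r+n)/(n+1)$ together with $\nu_2-\nu_1\ge 0$. In fact your write-up is cleaner, since the paper's version contains sign typos (it states $(r+n)/(n+1)\le 1$ for $r\ge 1$ and ``increasing as $\nu_1\ge\nu_2$''), whereas you have the inequalities in the correct direction.
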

\begin{proof}
Note that $r \ge 1$, we have $\frac{{\Gamma (r + n + 1)}}{{(n + 1)!}}/\frac{{\Gamma (r + n)}}{{n!}} = \frac{{r + n}}{{n + 1}} \le 1$. Then
\[\frac{{{\rm{P}}(Y = n)}}{{{\rm{P}}(X = n)}} = {\left(\frac{{\Gamma (r + n)}}{{n!\Gamma (r)}}\right)^{{\nu _2} - {\nu _1}}}\frac{{C(r,{\nu _1},p)}}{{C(r,{\nu _2},p)}},\quad (n = 0,1,2, \ldots ).\]
which is increasing in $n$ as ${\nu _1} \ge {\nu _2}$.
\end{proof}

Especially, assume that $r$ is a positive integer, the CMNB should be called the COM-Pascal distribution. Let $X$ be CMNB distributed and $Y$ be negative binomial distributed with the same parameters $r,p$,  it yields to $X{ \le _{lr}}Y$ when ${\nu _2} > {\nu _1} = 1$.

The next theorem is proved in the view of weighted Poisson distribution (\ref{eq:wpd}) from weighted function of CMNB distribution.  Example 1.C.59 of \cite{shaked07} states the obvious lemma below:

\begin{lemma}
Define $X_w$ as the r.v. with weighted density function ${f_w}(x) = \frac{{w(x)}}{{{\rm{E[}}w(X)]}}f(x),(x \ge 0)$,
Similarly, for another nonnegative r.v. $Y$ with density function $g$, define $Y_w$ as the r.v. with the weighted density function ${g_w}(y) = \frac{{w(y)}}{{{\rm{E[}}w(Y)]}}g(y),(y \ge 0)$. If $w(x)$ is an increasing function, then $X{ \le _{hr}}Y \Rightarrow {X_w}{ \le _{hr}}{Y_w}$.
\end{lemma}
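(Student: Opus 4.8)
The plan is to compute the hazard rate of the weighted variables explicitly and reduce the whole statement to one pointwise inequality. Write $c_X=\mathrm{E}[w(X)]$ and $c_Y=\mathrm{E}[w(Y)]$, which we take to be finite and positive so that $X_w,Y_w$ are well defined; let $\bar F,\bar G$ be the survival functions of $X,Y$, and set $A(t)=\int_t^\infty w(y)g(y)\,dy$ and $B(t)=\int_t^\infty w(y)f(y)\,dy$. Since $X_w$ has density $w(t)f(t)/c_X$ and survival function $B(t)/c_X$, its hazard rate is $r_{X_w}(t)=w(t)f(t)/B(t)$, and likewise $r_{Y_w}(t)=w(t)g(t)/A(t)$. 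Hence, on the range of $t$ where $A(t),B(t)>0$, the desired conclusion $X_w\le_{hr}Y_w$ is the inequality $r_{X_w}(t)\ge r_{Y_w}(t)$, which is automatic where $w(t)=0$ and, where $w(t)>0$, reduces after cancellation to
\[
f(t)\,A(t)\;\ge\;g(t)\,B(t). \qquad (\star)
\]

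To prove $(\star)$ I would extract two consequences of $X\le_{hr}Y$. First, $r_X(t)\ge r_Y(t)$ is the same as $f(t)\bar G(t)\ge g(t)\bar F(t)$, hence $f(t)\ge\frac{\bar F(t)}{\bar G(t)}\,g(t)$. Second, $X\le_{hr}Y$ says $\bar G/\bar F$ is non-decreasing, which is exactly the assertion that $(X\mid X>t)\le_{st}(Y\mid Y>t)$ for every $t$; since $w$ is increasing, comparing expectations of $w$ under these two conditional laws gives $\mathrm{E}[w(X)\mid X>t]\le\mathrm{E}[w(Y)\mid Y>t]$, i.e.\ $B(t)/\bar F(t)\le A(t)/\bar G(t)$, i.e.\ $\bar F(t)A(t)\ge\bar G(t)B(t)$. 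Combining the two, and using $A(t)\ge 0$ and $g(t)\ge 0$,
\[
f(t)A(t)\;\ge\;\frac{\bar F(t)}{\bar G(t)}\,g(t)\,A(t)\;\ge\;\frac{\bar F(t)}{\bar G(t)}\,g(t)\cdot\frac{\bar G(t)}{\bar F(t)}\,B(t)\;=\;g(t)B(t),
\]
which is $(\star)$. Therefore $r_{X_w}\ge r_{Y_w}$ throughout, that is $X_w\le_{hr}Y_w$; the orderings $X_w\le_{st}Y_w$ and $X_w\le_{MLR}Y_w$ then follow from the implications among stochastic orders recalled at the beginning of this subsection.

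The step I expect to be the main obstacle is $(\star)$, and the crux is that it genuinely needs \emph{both} ingredients used above. Neither the pointwise hazard-rate inequality $r_X\ge r_Y$ by itself, nor the residual-lifetime comparison $(X\mid X>t)\le_{st}(Y\mid Y>t)$ by itself, is sufficient: stochastic order is in general not preserved by weighting with an increasing $w$ (a simple two-atom example already fails), so a proof attempting to argue ``$\le_{st}$ of the conditionals $\Rightarrow$ $\le_{st}$ of the weighted conditionals $\Rightarrow$ $\le_{hr}$ of $X_w,Y_w$'' cannot work, and the chaining displayed above is precisely where the two facts are married. The only remaining points are the usual endpoint technicalities — where $\bar F,\bar G,A$ or $B$ vanish and the ratios degenerate — which are handled routinely by restricting to the interior of the common support and passing to limits. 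Finally, although the lemma is stated for densities, the argument transfers verbatim to the discrete distributions used elsewhere in the paper: one replaces the integrals by tail sums $A(n)=\sum_{k\ge n}w(k)g(k)$ and $B(n)=\sum_{k\ge n}w(k)f(k)$, observes $r_{X_w}(n)=w(n)f(n)/B(n)$, and the same two inequalities deliver $f(n)A(n)\ge g(n)B(n)$, hence $X_w\le_{hr}Y_w$.
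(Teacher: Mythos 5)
Your argument is correct. Note that the paper itself offers no proof of this lemma: it is introduced as ``the obvious lemma'' with a citation to Example 1.C.59 of Shaked and Shanthikumar (2007), so there is nothing in the text to compare your derivation against. What you have written is essentially the standard textbook argument, and it is sound: the normalizing constants $c_X,c_Y$ cancel in the hazard rates, so the claim reduces to $f(t)A(t)\ge g(t)B(t)$, and this follows by multiplying the two consequences of $X\le_{hr}Y$ — the pointwise inequality $f(t)\bar G(t)\ge g(t)\bar F(t)$ and the residual-life comparison $\bar F(t)A(t)\ge \bar G(t)B(t)$ (the latter coming from $(X\mid X>t)\le_{st}(Y\mid Y>t)$ together with monotonicity of $w$) — and dividing by $\bar F(t)\bar G(t)>0$. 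Your side remark that neither ingredient alone suffices, because increasing weighting does not preserve $\le_{st}$, is also accurate and is precisely why the hypothesis must be the hazard rate order rather than the usual stochastic order. The discrete transcription you sketch at the end is the version actually needed for the paper's application (with $r_X(n)=p_n/\mathrm{P}(X\ge n)$, so the conditioning event is $\{X\ge n\}$ rather than $\{X>n\}$), and it goes through verbatim as you say.
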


\begin{theorem}
Let $X$ and $Y$ be two CMNB distributed with parameters $(r,{\nu} ,p_1)$ and $(r,{\nu} ,p_2)$, respectively. If $({p_1} \le {p_2}, \nu  \le 1)$ or $({p_1} \le {p_2}, r \ge 1)$, then $X{ \le _{lr}}Y$, and therefore $X{ \le _{st}}Y$, $X{ \le _{hr}}Y$ and $X{ \le _{MLR}}Y.$
\end{theorem}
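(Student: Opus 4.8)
The plan is to establish the strongest of the four orderings, $X\le_{lr}Y$, by a direct computation of the ratio of the two probability mass functions; the remaining three then follow immediately from the implications $X\le_{lr}Y\Rightarrow X\le_{hr}Y\Rightarrow X\le_{MLR}Y$ and $X\le_{hr}Y\Rightarrow X\le_{st}Y$ recalled before Theorem~\ref{thm:so}. Both $X\sim\mathrm{CMNB}(r,\nu,p_1)$ and $Y\sim\mathrm{CMNB}(r,\nu,p_2)$ have full support $\mathrm{N}$, so their supports coincide and it suffices to check that $n\mapsto\mathrm{P}(Y=n)/\mathrm{P}(X=n)$ is nondecreasing on $\mathrm{N}$.

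The key step is to write out, from (\ref{eq:cnbd}),
\[
\frac{\mathrm{P}(Y=n)}{\mathrm{P}(X=n)}
=\frac{\bigl(\frac{\Gamma(r+n)}{n!\,\Gamma(r)}\bigr)^{\nu}p_2^{n}(1-p_2)^{r}/C(r,\nu,p_2)}
      {\bigl(\frac{\Gamma(r+n)}{n!\,\Gamma(r)}\bigr)^{\nu}p_1^{n}(1-p_1)^{r}/C(r,\nu,p_1)}
=\Bigl(\frac{p_2}{p_1}\Bigr)^{n}\cdot\frac{(1-p_2)^{r}\,C(r,\nu,p_1)}{(1-p_1)^{r}\,C(r,\nu,p_2)} .
\]
Since $X$ and $Y$ share the \emph{same} $r$ and $\nu$, the entire Gamma factor $\bigl(\Gamma(r+n)/(n!\,\Gamma(r))\bigr)^{\nu}$ cancels and the only surviving $n$-dependence is the geometric factor $(p_2/p_1)^{n}$. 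As $p_1\le p_2$ forces $p_2/p_1\ge1$, this factor is nondecreasing in $n$ while the quotient of normalizing constants does not depend on $n$; hence the ratio is nondecreasing and $X\le_{lr}Y$. (The computation in fact uses only the hypothesis $p_1\le p_2$.)

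It is also instructive to record the ``weighted--Poisson'' proof, in the spirit of the Lemma stated just before the theorem, which is where the two alternative hypotheses would enter. By (\ref{eq:wpd})--(\ref{eq:weight}), $\mathrm{CMNB}(r,\nu,p)$ is exactly the $w$-weighted version of $\mathrm{Poisson}(p)$ with the common weight $w(k)=[\Gamma(1+k)]^{1-\nu}[\Gamma(r+k)]^{\nu}$; since $\mathrm{Poisson}(p_1)\le_{lr}\mathrm{Poisson}(p_2)$ for $p_1\le p_2$ and the identical weight cancels in the likelihood ratio, $X\le_{lr}Y$ follows once more. If instead one wishes to obtain $X\le_{hr}Y$ from the preceding Lemma on $\le_{hr}$ of weighted variables, one must verify that $w$ is nondecreasing, i.e.\ that $w(k+1)/w(k)=(k+1)^{1-\nu}(r+k)^{\nu}\ge1$ for all $k\in\mathrm{N}$; this holds when $r\ge1$, because then $r+k\ge k+1$ gives $(k+1)^{1-\nu}(r+k)^{\nu}\ge(k+1)^{1-\nu}(k+1)^{\nu}=k+1\ge1$, and the regime $\nu\le1$ is the other case of interest. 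The main point calling for care is exactly this monotonicity of the weight; beyond it the argument is just the cancellation displayed above, so I do not anticipate a real obstacle.
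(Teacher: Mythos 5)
Your proof is correct, and it takes a genuinely different (and more direct) route than the paper. You compute the likelihood ratio explicitly from (\ref{eq:cnbd}): since $X$ and $Y$ share the same $r$ and $\nu$, the factor $\bigl(\Gamma(r+n)/(n!\,\Gamma(r))\bigr)^{\nu}$ cancels and $\mathrm{P}(Y=n)/\mathrm{P}(X=n)=(p_2/p_1)^n$ times a constant, which is nondecreasing whenever $p_1\le p_2$. This establishes the strongest order $\le_{lr}$ directly and, as you observe, shows the side conditions $\nu\le1$ or $r\ge1$ are not actually needed for this parametrization. The paper instead argues through the weighted-Poisson representation (\ref{eq:wpd})--(\ref{eq:weight}): it notes $\mathrm{Poisson}(p_1)\le_{hr}\mathrm{Poisson}(p_2)$ and invokes the preservation lemma stated just before the theorem, which requires the common weight $w(x)=[\Gamma(1+x)]^{1-\nu}[\Gamma(r+x)]^{\nu}$ to be increasing --- this is precisely where the hypotheses $\nu\le1$ or $r\ge1$ enter, exactly as in the aside you recorded. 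Note, however, that the lemma route only delivers $X_w\le_{hr}Y_w$, which is strictly weaker than the $\le_{lr}$ conclusion asserted in the theorem statement, so the paper's own proof leaves a gap that your direct cancellation argument closes. What the weighted-Poisson approach buys is generality (it transfers any base-distribution ordering to the weighted family under monotone weights); what your computation buys is a sharper conclusion under a weaker hypothesis with an entirely elementary argument.
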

\begin{proof}
For Poisson distributed $X$,$Y$ with mean $p_1$,$p_2$, if ${p_1} \le {p_2}$, then ${\rm{P}}(Y = n)/{\rm{P}}(X = n) = {(\frac{{{p_2}}}{{{p_1}}})^n}{e^{ - ({p_2} - {p_1})}}$
is increasing for all $n$. So $X{ \le _{hr}}Y$. From section 2.4, we know that CMNB is weight Poisson with weight  (\ref{eq:weight}).

 On the one hand, when $\nu  \le 1$, we notice that weighted density function $w(x) = {[\Gamma (1 + x)]^{1 - \nu }}{[\Gamma (r + x)]^\nu }$ for CMNB distribution is increasing with respect to $x$. On the other hand, $w(x) = \Gamma (1 + x){\left( {\frac{{\Gamma (r + x)}}{{\Gamma (1 + x)}}} \right)^\nu }$ is an increasing function with respect to $x$ as $r \ge 1$, that is, ${X_w}{ \le _{hr}}{Y_w}$.
\end{proof}

\subsection{Approximate to COM-Poisson distribution}
The next theorem enables CMNB distribution to be a suitable generalization since its limit distribution is the COM-Poisson under some conditions. We prove that CMNB distribution converges to the COM-Poisson distribution when $r$ goes to infinity.

\begin{theorem}\label{thm:foc}
Suppose that r.v. $X$ has CMNB distribution with parameters $(r,\nu ,p)$, denote the p.m.f. as $\mathrm{P}(\left. {X = k} \right|r,\nu ,p)$, and let $\lambda  = {r^\nu}\frac{p}{{1 - p}}$. Then
\begin{equation}
\lim_{r\to\infty}\mathrm{P}(X = k\mid r,\nu ,p)=\frac{{{\lambda ^k}}}{{{{(k!)}^\nu }}} \cdot \frac{1}{{Z(\lambda ,\nu )}},(k = 0,1,2, \cdots ).
\end{equation}
\end{theorem}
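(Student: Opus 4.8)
The plan is to read the limit as: fix $\lambda>0$ and let $p=p(r)=\lambda/(r^{\nu}+\lambda)$, so that $\lambda=r^{\nu}p/(1-p)$ holds for every $r$, and let $r\to\infty$. The first and decisive structural observation is that the factor $(1-p)^{r}$ occurs in both the numerator of (\ref{eq:cnbd}) and in $C(r,\nu,p)$, so it cancels. Writing $a_{k}(r)=\bigl(\tfrac{\Gamma(r+k)}{\Gamma(r)\,k!}\bigr)^{\nu}p^{k}$ and $S(r)=\sum_{i\ge 0}a_{i}(r)$ (finite since the ratio of consecutive terms tends to $p<1$), we have $\mathrm{P}(X=k\mid r,\nu,p)=a_{k}(r)/S(r)$, and the theorem reduces to $a_{k}(r)/S(r)\to\bigl(\lambda^{k}/(k!)^{\nu}\bigr)/Z(\lambda,\nu)$.

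The numerator is the easy part: since $\Gamma(r+k)/\Gamma(r)=\prod_{j=0}^{k-1}(r+j)=r^{k}\prod_{j=0}^{k-1}(1+j/r)$ and $p^{k}=\lambda^{k}/(r^{\nu}+\lambda)^{k}$, we get $a_{k}(r)=\tfrac{\lambda^{k}}{(k!)^{\nu}}\bigl(\tfrac{r^{\nu}}{r^{\nu}+\lambda}\bigr)^{k}\bigl(\prod_{j=0}^{k-1}(1+j/r)\bigr)^{\nu}\to\lambda^{k}/(k!)^{\nu}$ for each fixed $k$, while $Z(\lambda,\nu)<\infty$ by the ratio test. So everything comes down to proving $S(r)\to Z(\lambda,\nu)$, and then the conclusion follows because the ratio of a convergent numerator and a denominator converging to a positive limit converges to the ratio of the limits.

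For $S(r)\to Z(\lambda,\nu)$, the lower bound $\liminf_{r\to\infty}S(r)\ge Z(\lambda,\nu)$ is immediate by truncation: $S(r)\ge\sum_{i\le M}a_{i}(r)\to\sum_{i\le M}\lambda^{i}/(i!)^{\nu}$ for every $M$. The upper bound is the crux and requires care, since the naive per-term estimate $a_{i}(r)\le\tfrac{\lambda^{i}}{(i!)^{\nu}}\prod_{j<i}(1+j/r)^{\nu}\le\tfrac{\lambda^{i}}{(i!)^{\nu}}e^{\nu i(i-1)/(2r)}$ is \emph{not} summable in $i$ for fixed $r$, so it cannot serve as a dominating sequence. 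Instead I would split the sum at $i=\lfloor r\rfloor$. From $\prod_{j=0}^{i-1}(r+j)\le(r+i)^{i}$ and $r^{\nu}+\lambda\ge r^{\nu}$ one gets $a_{i}(r)\le\tfrac{\lambda^{i}}{(i!)^{\nu}}(1+i/r)^{\nu i}$. For $i\le r$ this yields $a_{i}(r)\le(2^{\nu}\lambda)^{i}/(i!)^{\nu}$, which is summable and whose tail $\varepsilon_{N}:=\sum_{i>N}(2^{\nu}\lambda)^{i}/(i!)^{\nu}$ can be made arbitrarily small by choosing $N$ large (independently of $r$). For $i>r$, using $1+i/r\le 2i/r$ and $i!\ge(i/e)^{i}$ one gets $a_{i}(r)\le\bigl(\lambda(2e)^{\nu}/r^{\nu}\bigr)^{i}$, a geometric series whose ratio drops below $\tfrac12$ once $r$ is large, so $\sum_{i>\lfloor r\rfloor}a_{i}(r)\le 2^{-\lfloor r\rfloor}\to 0$. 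Splitting $S(r)=\sum_{i\le N}+\sum_{N<i\le\lfloor r\rfloor}+\sum_{i>\lfloor r\rfloor}$, we obtain for fixed $N$ that $\limsup_{r\to\infty}S(r)\le\sum_{i\le N}\lambda^{i}/(i!)^{\nu}+\varepsilon_{N}\le Z(\lambda,\nu)+\varepsilon_{N}$; letting $N\to\infty$ gives $\limsup_{r\to\infty}S(r)\le Z(\lambda,\nu)$, hence $S(r)\to Z(\lambda,\nu)$ and the theorem follows.

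The main obstacle, as indicated, is this uniform tail control on $S(r)$: because $\Gamma(r+i)/(\Gamma(r)\,i!)$ \emph{grows} in $r$ — unlike $\binom{m}{i}\le m^{i}/i!$ in the COM-binomial limit, where a single dominating sequence suffices — no global domination works here, and the two-regime split around $i\approx r$ (near-$\lambda^{i}/(i!)^{\nu}$ behavior for $i\lesssim r$, geometric decay with vanishing ratio for $i\gtrsim r$) is exactly what rescues the interchange of limit and summation. Everything else is routine Gamma-function asymptotics and the ratio test.
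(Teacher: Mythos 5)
Your proposal is correct and follows the same route as the paper: substitute $p=\lambda/(r^{\nu}+\lambda)$, cancel the factor $(1-p)^{r}$ against the normalizer, observe that each term $\bigl(\tfrac{\Gamma(r+k)}{\Gamma(r)\,r^{k}}\bigr)^{\nu}(1+\lambda/r^{\nu})^{-k}\to 1$, and reduce everything to showing $C(r,\nu,p)\,(1+\lambda/r^{\nu})^{r}\to Z(\lambda,\nu)$. The one substantive difference is that the paper justifies this last step only by writing $\lim_{r}\sum_{i=0}^{\infty}=\lim_{n}\lim_{r}\sum_{i=0}^{n}$, i.e.\ it asserts the interchange of limit and summation without proof, whereas you correctly identify this interchange as the crux and supply the missing uniform tail control via the split at $i\approx r$: the bound $a_{i}(r)\le(2^{\nu}\lambda)^{i}/(i!)^{\nu}$ for $i\le r$ and the geometric bound $a_{i}(r)\le\bigl(\lambda(2e)^{\nu}/r^{\nu}\bigr)^{i}$ for $i>r$ are both valid, and together they make the $\limsup$/$\liminf$ sandwich work. (A small quibble: your two bounds combine into a single $r$-independent dominating sequence $g_{i}=(2^{\nu}\lambda)^{i}/(i!)^{\nu}+2^{-i}$ valid for all $r$ large, so ``no global domination works'' overstates the obstacle — only the naive bound $e^{\nu i(i-1)/(2r)}$ fails; but this does not affect correctness.) In short, your proof is the paper's proof with the analytic gap actually closed.
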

\begin{proof}
Notice that $p = \frac{\lambda }{{{r^\nu} + \lambda }}$, substitute to p.m.f (\ref{eq:cnbd}), then we obtain
\[\mathrm{P}(X = k\mid r,\nu ,p)
= \frac{{{\lambda ^k}}}{{{{(k!)}^\nu }}} \cdot
{\left( {\frac{{\Gamma (r + k)}}{{\Gamma (r)\;{r^k}}}} \right)^\nu }
\cdot \frac{1}{{{{(1 + \lambda /{r^\nu})}^k}}}
\left/ \frac{{C(r,\nu ,p)}}{{( {\frac{{{r^\nu}}}{{{r^\nu} + \lambda }}} )^r}}
\right..
\]
Hence,
\[\lim_{r\to\infty}\mathrm{P}(X = k\mid r,\nu ,p) =
\frac{{{\lambda ^k}}}{{{{(k!)}^\nu }}}
\left/ \frac{{C(r,\nu ,p)}}{{( {\frac{{{r^\nu}}}{{{r^\nu} + \lambda }}} )^r}}
\right.
= \frac{{{\lambda ^k}}}{{{{(k!)}^\nu }}} \cdot \frac{1}{{Z(\lambda ,\nu )}}\]
holds as $\mathop {\lim }\limits_{r \to  + \infty } {\left( {\frac{{\Gamma (r + k)}}{{\Gamma (r)\;{r^k}}}} \right)^\nu } \cdot \frac{1}{{{{(1 + \lambda /{r^v})}^k}}} = 1,(k = 0,1,2, \cdots )$ and
\[\mathop {\lim }\limits_{r \to  + \infty } \frac{{C(r,\nu ,p)}}{{( {\frac{{{r^v}}}{{{r^v} + \lambda }}} )^r}}
= \mathop {\lim }\limits_{n \to  + \infty } \mathop {\lim }\limits_{r \to  + \infty } \sum\limits_{i = 0}^n {\frac{{{\lambda ^i}}}{{{{(i!)}^\nu }}} \cdot {{\left( {\frac{{\Gamma (r + i)}}{{\Gamma (r)\;{r^i}}}} \right)}^\nu } \cdot \frac{1}{{{{(1 + \lambda /{r^\nu})}^i}}}}  = \mathop {\lim }\limits_{n \to  + \infty } \sum\limits_{i = 0}^n {\frac{{{\lambda ^i}}}{{{{(i!)}^\nu }}}}  = Z(\lambda ,\nu ).\]
\end{proof}

\section{Characterizations}
\subsection{Sum of equicorrelated geometrically distributed r.v.}
It is well known that the binomial r.v. can be seen as the sum of $m$ independent Bernoulli r.v. $Z_i$.
\[S = {Z_1} + {Z_2} +\cdots  + {Z_m}\]
where
\[{\mathrm{P}}({Z_i} = 1) = p,\,{\mathrm{P}}({Z_i} = 0) = 1 - p,\,i = 1,2, \cdots ,m\]
\[\mathrm{P}(S = k) = {{m} \choose {k}} {p^k}{(1 - p)^{m - k}}.\]

\cite{shmueli05} and \cite{borges14} mentioned that the COM-binomial distribution (\ref{eq:cb}) can be presented as a sum of equicorrelated Bernoulli r.vs $\{ {Z_i}\} _{i = 1}^m$ with joint distribution
\[\mathrm{P}({Z_1} = {z_1},\cdots,{Z_m} = {z_m}) = \frac{{{ {{m} \choose {k}}^{v - 1}}{p^k}{{(1 - p)}^{m - k}}}}{{\sum\limits_{{x_1} = 0}^1 \cdots{\sum\limits_{{x_m} = 0}^1 {{{{m} \choose {\sum_{i=1}^{m}x_i}}^{v - 1}}} {p^{\sum_{i=1}^{m}x_i}}{{(1 - p)}^{m - \sum_{i=1}^{m}x_i}}} }}, z = ({z_1},\cdots,{z_m}) \in {\{ 0,1\} ^m},\]
where $k=\sum_{i=1}^{m}z_i$.

As we know, negative binomial distribution can be treated as the sum of $m$ independent geometric r.vs $Z_i$
$(i=1,\cdots,m)$:
\[S = {Z_1} + {Z_2} +\cdots  + {Z_m}\]
\[\mathrm{P}(S = x) = {{m+x-1} \choose {x}}{p^{ x}}{(1 - p)^m},\]
where
$\mathrm{P}({Z_i} = {z_i}) =  p^{z_i}(1 - p), (z_i=1,2,,\cdots ). $

It is similar to see that the CMNB distribution can be interpreted as a sum of equicorrelated geometric r.vs $Z_i$
$(i=1,\cdots,m)$ with joint distribution
\begin{equation} \label{eq:gs1}
\mathrm{P}({Z_1} = {z_1}, \cdots ,{Z_m} = {z_m}) \propto
{{m+x-1} \choose {x}}^{v - 1}{p^{x}}{(1 - p)^m},
\end{equation}
where $x=\sum_{i=1}^{m}z_i$.

The reason is that we assume $\{ {Z_i}\} _{i = 1}^m$ is equicorrelated, and $x=\sum_{i=1}^{m}z_i$ has ${{m+x-1} \choose {x}}$ feasible positive integer solutions, and each solution has probability $\mathrm{P}({Z_1} = {z_1}, \cdots ,{Z_m} = {z_m})$. Then, the ${{m+x-1} \choose {x}}$ possible values of random vector $({Z_1}, \cdots ,{Z_m})$ such that $S=\sum_{i=1}^{m}Z_i$ is CMNB distributed, namely
$$
{{m+x-1} \choose {x}} \mathrm{P}({Z_1} = {z_1}, \cdots ,{Z_m} = {z_m}) \propto
{{m+x-1} \choose {x}}^{v }{p^{x}}{(1 - p)^m}.
$$
Thus we have (\ref{eq:gs1}).

\subsection{Conditional distribution}
In this subsection, two conditional distribution characterizations are obtained for CMNB distribution. For two independent r.vs $X,Y$, what is the form of the conditional distribution of $X$ given $S=X+Y$? Consider the sum of CMNB r.vs with parameters $(r_x,\nu ,p)$ and $(r_y,\nu ,p)$, then
\begin{align*}
\mathrm{P}(S = s) &= \sum\limits_{x = 0}^s {\mathrm{P}(X = x)} \mathrm{P}(Y = s - x)
= \sum\limits_{x = 0}^s {{{\left(\frac{{\Gamma ({r_x} + x)}}{{x!\Gamma ({r_x})}}\right)}^\nu }
                                 \frac{{{p^x}{{(1 - p)}^{{r_x}}}}}{{C({r_x},\nu ,p)}}}
                                 {\left(\frac{{\Gamma ({r_y} + s - x)}}{{(s - x)!\Gamma ({r_y})}}\right)^\nu }
                                 \frac{{{p^{s - x}}{{(1 - p)}^{{r_y}}}}}{{C({r_y},\nu ,p)}}\\
 &= \frac{{{{(1 - p)}^{{r_x}}}{{(1 - p)}^{{r_y}}}}}{{C({r_x},\nu ,p)C({r_y},\nu ,p)}}\sum\limits_{x = 0}^s {{{\left( {\frac{{\Gamma ({r_x} + x)\Gamma ({r_y} + s - x)}}{{x!\Gamma ({r_x})(s-x)!\Gamma ({r_y})}}} \right)}^\nu }} {p^s}\\
 &= \frac{{{{(1 - p)}^{{r_x}}}{{(1 - p)}^{{r_y}}}{{[\Gamma ({r_x} + {r_y} + s)]}^\nu }}}{{C({r_x},\nu ,p)C({r_y},\nu ,p){{[\Gamma ({r_x} + {r_y})]}^\nu }}}\sum\limits_{x = 0}^s {{{\left( {{s} \choose {x}}{\frac{{{\rm{B}}({r_x} + x,{r_y} + s - x)}}{{{\rm{B}}({r_x},{r_y})}}} \right)}^\nu }} {p^s}.
 \end{align*}
The conditional distribution $\mathrm{P}(X = k\left| {S = s} \right.)$ is
\begin{equation} \label{eq:cnh}
 \frac{{\mathrm{P}(X = k)\mathrm{P}(Y = s - k)}}{{\mathrm{P}(S = s)}} = {\left( {{{s} \choose {k}}\frac{{{\rm{B}}({r_x} + k,{r_y} + s - k)}}{{{\rm{B}}({r_x},{r_y})}}} \right)^\nu }\left /\sum\limits_{x = 0}^s {{{\left( {{{s} \choose {x}} \frac{{{\rm{B}}({r_x} + x,{r_y} + s - x)}}{{{\rm{B}}({r_x},{r_y})}}} \right)}^\nu }}.\right.
\end{equation}
Using (\ref{eq:cnh}), we naturally define the p.m.f. of COM-negative hypergeometric distribution with parameter $(z,\nu,m,n)$ as follow:
\begin{equation} \label{eq:cnhv}
\mathrm{P}(X = k)= {\left( {{{z} \choose {k}}\frac{{{\rm{B}}({m} + k,{n} + z - k)}}{{{\rm{B}}({m},{n})}}} \right)^\nu }/\sum\limits_{x = 0}^z {{{\left( {{{z} \choose {x}} \frac{{{\rm{B}}({m} + x,{m} + z - x)}}{{{\rm{B}}({m},{n})}}} \right)}^\nu }} = \frac{{\left( {{{z} \choose {k}}\frac{{{\rm{B}}({m} + k,{n} + z - k)}}{{{\rm{B}}({m},{n})}}} \right)^\nu }}{{N(m,n,z,\nu )}},
\end{equation}
where $k = 0,1,2, \cdots ,z$ and $N(m,n,z,\nu )$ is the normalization constant.

When $\nu=1$, COM-negative hypergeometric distribution turns out to be negative hypergeometric distribution, see \cite{wimmer99}, \cite{johnson05}.

 By \cite{patil64}'s general characterization theorem for negative binomial, Poisson and geometric distribution, we know that, given $X + Y= x+y$, if the conditional distribution
$ X \left| {X+Y } \right.$ is negative hypergeometric with parameters $m$ and $n$ for all values of the sum $x + y$,
\[\mathrm{P}(X=x \left| {X + Y=x+y} \right.) = {{x+y} \choose {x}}\frac{{{\rm{B}}(m + x,n + y)}}{{{\rm{B}}(m,n)}},\]
then $X$ and $Y$ both are negative binomial distribution, with parameters $(m,p)$ and $(n,p)$,
$$f(x) =\frac{{\Gamma (m + x)}}{{x!{\mkern 1mu} \Gamma (m)}}{p^x}{(1 - p)^m},\ g(y) =\frac{{\Gamma (n + y)}}{{y!{\mkern 1mu} \Gamma (n)}}{p^y}{(1 - p)^n},$$
respectively (see also \cite{kagan73}).

\begin{lemma}\label{eq:patil}(\cite{patil64})
Let $X$ and $Y$ be independent and both discrete (or both continuous) r.vs and suppose $\mathrm{P}(X \left| {X+Y } \right.)$ is the function $c(x,x + y)$. If $\frac{{c(x + y,x + y)c(0,y)}}{{c(x,x + y)c(y,y)}}$ is of the form $h(x + y)/h(x)h(y)$  where $h( \cdot )$ is an arbitrary non-negative function, then
\begin{equation} \label{eq:fg}
f(x) = f(0)h(x){e^{ax}},g(y) = g(0)\frac{{h(y)c(0,y)}}{{c(y,y)}}{e^{ay}},
\end{equation}
where $\mathrm{P}(X=x)=f(x) > 0, \mathrm{P}(Y=y)=g(y) > 0$ and $f(0),g(0)$ are the corresponding normalizer for $f(x)$ and $g(y)$ which make them p.m.f..
\end{lemma}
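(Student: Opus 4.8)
The plan is to unwind the conditional probabilities into the laws of $X$ and $Y$, exploit a complete cancellation in the hypothesized ratio, and reduce the problem to a multiplicative Cauchy functional equation. First I would use independence to write, for every admissible pair $(x,y)$,
\[
c(x,x+y)=\mathrm{P}(X=x\mid X+Y=x+y)=\frac{f(x)\,g(y)}{\mathrm{P}(X+Y=x+y)},
\]
so that in particular $c(x+y,x+y)=f(x+y)g(0)/\mathrm{P}(X+Y=x+y)$, $c(0,y)=f(0)g(y)/\mathrm{P}(X+Y=y)$ and $c(y,y)=f(y)g(0)/\mathrm{P}(X+Y=y)$. Substituting these four expressions into the left-hand side of the hypothesis, the factors $g(0)$, $g(y)$ and the convolution denominators $\mathrm{P}(X+Y=x+y)$, $\mathrm{P}(X+Y=y)$ all cancel, and one is left with
\[
\frac{c(x+y,x+y)\,c(0,y)}{c(x,x+y)\,c(y,y)}=\frac{f(x+y)\,f(0)}{f(x)\,f(y)}.
\]

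Next I would equate this with the assumed form $h(x+y)/[h(x)h(y)]$ and set $q(x):=f(x)/[f(0)\,h(x)]$, which is well-defined and strictly positive on the support since $f>0$ and $h>0$ there. The identity then rearranges to $q(x+y)=q(x)\,q(y)$ for all admissible $x,y$; putting $x=y=0$ forces $q(0)=1$, and in the discrete case iterating over the non-negative integers gives $q(x)=q(1)^{x}=e^{ax}$ with $a:=\log q(1)\in\mathbb{R}$. Unwinding the definition of $q$ yields $f(x)=f(0)\,h(x)\,e^{ax}$, the first assertion. For the second, the same bookkeeping shows $c(0,y)/c(y,y)=f(0)\,g(y)/[f(y)\,g(0)]$, hence $g(y)=g(0)\,\dfrac{f(y)}{f(0)}\cdot\dfrac{c(0,y)}{c(y,y)}$, and substituting $f(y)=f(0)\,h(y)\,e^{ay}$ gives $g(y)=g(0)\,\dfrac{h(y)\,c(0,y)}{c(y,y)}\,e^{ay}$, as claimed. (In the continuous case one has $q(x+y)=q(x)q(y)$ for all $x,y\ge 0$, and since $q$ is a ratio of densities it is measurable, so the classical argument for measurable solutions of the multiplicative Cauchy equation again gives $q(x)=e^{ax}$.)

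The only real content is the functional-equation step: everything before it is routine substitution, and everything after it is back-substitution. In the discrete setting — the case relevant to the CMNB characterizations below — that step is elementary, so the one thing to be careful about is admissibility of the arguments, i.e.\ remaining inside the region where $f$, $g$, $h$ and $\mathrm{P}(X+Y=\cdot)$ are all strictly positive; this is exactly what the standing positivity hypotheses $f>0$, $g>0$ secure. In the continuous setting the additional subtlety is the customary need for a regularity assumption to rule out pathological solutions of $q(x+y)=q(x)q(y)$, which I would dispose of by the measurability remark just made.
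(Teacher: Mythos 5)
Your argument is correct, and it is essentially the standard derivation from Patil--Seshadri: the paper itself states this lemma without proof (it is quoted from \cite{patil64}), so there is no in-paper argument to compare against. Your reduction of the hypothesized ratio to $f(x+y)f(0)/[f(x)f(y)] = h(x+y)/[h(x)h(y)]$ via cancellation of the $g$-factors and convolution denominators, followed by the multiplicative Cauchy equation for $q(x)=f(x)/[f(0)h(x)]$ and back-substitution for $g$, is exactly the intended route, and your attention to the positivity hypotheses (which keep $q$ well defined and force $q(0)=1$, hence $h(0)=1$) and to measurability in the continuous case covers the only delicate points.
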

Now we apply Lemma \ref{eq:patil} for characterizing CMNB distribution.
\begin{theorem}
Let X,Y be the independent discrete r.v. with $\mathrm{P}(X=x)=f(x)>0 $ and $\mathrm{P}(Y=y)=g(y)>0$.
If the $\mathrm{P}(X=x \left| {X+Y=x+y } \right.)$ is the COM-negative hypergeometric distribution (\ref{eq:cnhv}) with parameters $(z=x+y,\nu,m,n)$ for all $x+y$, then both $X$ and $Y$ have the CMNB distributions with the parameters $(m,\nu ,p)$ and $(n,\nu ,p)$ respectively.
\end{theorem}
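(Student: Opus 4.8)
The plan is to apply Lemma~\ref{eq:patil} with $c(x,x+y)$ taken to be the COM-negative hypergeometric p.m.f.~(\ref{eq:cnhv}) evaluated at $z=x+y$, and to verify that the ratio occurring in its hypothesis has the required multiplicative shape. First I would write out the four quantities $c(x+y,x+y)$, $c(x,x+y)$, $c(0,y)$, $c(y,y)$ from (\ref{eq:cnhv}). The normalizers cancel in two pairs: the $N(m,n,x+y,\nu)$ of $c(x+y,x+y)$ against that of $c(x,x+y)$, and the $N(m,n,y,\nu)$ of $c(0,y)$ against that of $c(y,y)$. Since $\binom{x+y}{x+y}=\binom{y}{0}=\binom{y}{y}=1$ and the four copies of $\mathrm{B}(m,n)$ also cancel in pairs, the ratio $\frac{c(x+y,x+y)c(0,y)}{c(x,x+y)c(y,y)}$ reduces to a product of Beta functions raised to the power $\nu$.

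The second step is the Gamma-function bookkeeping. Expanding each Beta factor via $\mathrm{B}(a,b)=\Gamma(a)\Gamma(b)/\Gamma(a+b)$, writing $\binom{x+y}{x}=(x+y)!/(x!\,y!)$, and cancelling the common $\Gamma(n)$, $\Gamma(n+y)$, $\Gamma(m+n+x+y)$ and $\Gamma(m+n+y)$ terms, one is left with
\[
\frac{c(x+y,x+y)c(0,y)}{c(x,x+y)c(y,y)}=\left(\frac{\Gamma(m+x+y)\,x!\,y!\,\Gamma(m)}{(x+y)!\,\Gamma(m+x)\,\Gamma(m+y)}\right)^{\nu}=\frac{H(x+y)}{H(x)\,H(y)},\qquad H(k):=\left(\frac{\Gamma(m+k)}{k!\,\Gamma(m)}\right)^{\nu},
\]
with $H(0)=1$ and $H$ non-negative. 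Hence the hypothesis of Lemma~\ref{eq:patil} is met, with its function $h(\cdot)$ replaced by $H(\cdot)$.

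The third step reads off the two marginals from (\ref{eq:fg}). The formula for $f$ gives $f(x)=f(0)\,H(x)\,e^{ax}$ for some real $a$; putting $p:=e^{a}$ and normalizing via $\sum_{x\ge0}\left(\frac{\Gamma(m+x)}{x!\,\Gamma(m)}\right)^{\nu}p^{x}=C(m,\nu,p)/(1-p)^{m}$ gives $f(x)=\left(\frac{\Gamma(m+x)}{x!\,\Gamma(m)}\right)^{\nu}p^{x}(1-p)^{m}/C(m,\nu,p)$, i.e.\ $f$ is $\mathrm{CMNB}(m,\nu,p)$. For $g$ I would first compute $c(0,y)/c(y,y)=\left(\frac{\Gamma(m)\,\Gamma(n+y)}{\Gamma(m+y)\,\Gamma(n)}\right)^{\nu}$ (again the $N(m,n,y,\nu)$ cancels), whence $H(y)\,c(0,y)/c(y,y)=\left(\frac{\Gamma(n+y)}{y!\,\Gamma(n)}\right)^{\nu}$; substituting into the $g$-part of (\ref{eq:fg}) yields $g(y)=g(0)\left(\frac{\Gamma(n+y)}{y!\,\Gamma(n)}\right)^{\nu}p^{y}$, i.e.\ $g$ is $\mathrm{CMNB}(n,\nu,p)$ with the \emph{same} $p$, since the constant $a$ in Patil's formula is shared by $f$ and $g$.

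The main obstacle is precisely the Gamma-function algebra of step two: one must check that every cross term cancels and that the binomial coefficient is expanded correctly, so that the ratio genuinely collapses to the clean form $H(x+y)/(H(x)H(y))$ demanded by Lemma~\ref{eq:patil}; this is routine but error-prone. The only other non-cosmetic point is to confirm that $a$ corresponds to an admissible parameter $p\in(0,1)$: using the asymptotics $H(x)\sim\Gamma(m)^{-\nu}x^{\nu(m-1)}$, the power series $\sum_x H(x)t^x$ has radius of convergence $1$, so summability of $f(x)=f(0)H(x)e^{ax}$ (and of the analogous expression for $g$) confines $e^{a}$ to $(0,1)$ and identifies it with the CMNB parameter $p$.
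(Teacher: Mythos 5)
Your proposal is correct and follows essentially the same route as the paper: apply Lemma~\ref{eq:patil} with $c$ the COM-negative hypergeometric kernel, verify that $\frac{c(x+y,x+y)c(0,y)}{c(x,x+y)c(y,y)}$ collapses to $h(x+y)/h(x)h(y)$ with $h(k)=\bigl(\Gamma(m+k)/(k!\,\Gamma(m))\bigr)^{\nu}$, and read the two marginals off (\ref{eq:fg}) with $p=e^{a}$. Your additional remarks on the cancellation of normalizers and on why $e^{a}$ must lie in $(0,1)$ only make explicit what the paper leaves implicit.
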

\begin{proof}
Note that $c(x,x + y) ={\left( {{{x+y} \choose {x}}\frac{{{\rm{B}}({m} + x,{n} +x+y-x)}}{{{\rm{B}}({m},{n})}}} \right)^\nu }/{{N(m,n,z,\nu )}} $. Then
$$c(a,b) ={\left( {{{b} \choose {a}}\frac{{{\rm{B}}({m+a} ,{n} + b-a)}}{{{\rm{B}}({m},{n})}}} \right)^\nu }/{{N(m,n,b,\nu )}}, $$so
 \begin{align*}
\frac{{c(x + y,x + y)c(0,y)}}{{c(x,x + y)c(y,y)}} &= {\left( {{\rm{B}}(m + x + y,n) \cdot {\rm{B}}(m,n + y)/{{x+y} \choose {x}}{\rm{B}}(m + x,n + y){\rm{B}}(m + y,n)} \right)^\nu }\\
 &= {\left( {\frac{{\Gamma (m + x + y)}}{{(x + y)!\Gamma (m)}}/\frac{{\Gamma (m + x)}}{{(x)!\Gamma (m)}}\frac{{\Gamma (m + y)}}{{(y)!\Gamma (m)}}} \right)^\nu }.
 \end{align*}
 We have $h(x) = {\left( {\frac{{\Gamma (m + x)}}{{(x)!\Gamma (m)}}} \right)^v},h(y) = {\left( {\frac{{\Gamma (m + y)}}{{(y)!\Gamma (m)}}} \right)^v}$. Apply (\ref{eq:fg}), $\frac{{h(y)c(0,y)}}{{c(y,y)}} = {\left( {\frac{{\Gamma (n + y)}}{{\Gamma (n)y!}}} \right)^\nu }$, let $p = {e^a}$ and compare with expression (\ref{eq:cnbd}), hence the proof is complete.
\end{proof}

\cite{rao64} study the following characterization of the Poisson
distribution: If $X$ is a discrete r.v. taking only nonnegative integer values and the
conditional distribution of $Y$ given $X = x$ is binomial distribution with parameters $\texttt{B}i(x, p) $ ($p$
does not depend on $x$), then $X$ follows the Poisson distribution iff
\[\mathrm{P}[Y = k] = P[Y = k|Y = X].\]

Based on the COM-negative hypergeometric distribution above and an extension of \cite{rao64}'s characterization which established by \cite{shanbhag77}, the Rao-Rubin characterization for CMNB is obtained.

\begin{lemma}\label{lem:shan}(\cite{shanbhag77})
Let $X,Y$ be the non-negative r.vs such that $\mathrm{P}(X = z) = {p_z}$ with\\ ${p_0} < 1,\,{p_z} > 0$, and
 \begin{align*}
\mathrm{P}(\left. {Y = k} \right|X = z) = \frac{{{a_r}{b_{z - k}}}}{{\sum\nolimits_{s = 0}^z  {{a_s}{b_{z - s}}} }} = \frac{{{a_k}{b_{z - k}}}}{{{c_z}}},(k = 0,1, \cdots ,z),
 \end{align*}
 where ${a_z} > 0$ for all $z \ge 0$, ${b_0},{b_1} > 0$ and ${b_z} \ge 0$ for $z \ge 2$, then
$$\mathrm{P}(Y = k) = \mathrm{P}(\left. {Y = k} \right|X = Y),(r = 0,1, \cdots ) \quad {\rm{iff}}\quad \frac{{{p_z}}}{{{c_z}}} = \frac{{{p_0}}}{{{c_0}}}{\theta ^z},(z = 0,1, \cdots )  \quad {\rm{for \ some}}\quad \theta  > 0 .$$
\end{lemma}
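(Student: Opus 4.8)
The plan is to turn the Rao--Rubin identity $\mathrm{P}(Y=k)=\mathrm{P}(Y=k\mid X=Y)$ into a single functional equation for the sequence $q_z:=p_z/c_z$ and then solve it. First I would compute both sides. Since $\mathrm{P}(Y=k\mid X=z)=a_kb_{z-k}/c_z$, summing over $z\ge k$ gives
\[\mathrm{P}(Y=k)=\sum_{z\ge k}p_z\,\frac{a_kb_{z-k}}{c_z}=a_k\sum_{j\ge0}b_j\,q_{k+j}.\]
On the other hand $\mathrm{P}(X=Y)=\sum_{z}p_z\,\mathrm{P}(Y=z\mid X=z)=b_0\sum_z a_zq_z=:b_0K$ (here $0<K<\infty$, since $b_0K=\mathrm{P}(X=Y)\le1$ and all terms are positive), and $\mathrm{P}(Y=k,X=Y)=\mathrm{P}(Y=k,X=k)=a_kb_0q_k$, so $\mathrm{P}(Y=k\mid X=Y)=a_kq_k/K$. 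Cancelling the strictly positive $a_k$, the Rao--Rubin condition is \emph{equivalent} to
\[\sum_{j\ge0}b_j\,q_{k+j}=\frac{q_k}{K},\qquad k=0,1,2,\dots\qquad(\star)\]

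For the ``if'' direction I would substitute $q_z=q_0\theta^z$ into $(\star)$: the left side equals $q_0\theta^k\sum_j b_j\theta^j$, so $(\star)$ holds as soon as $\sum_j b_j\theta^j=1/K$. But that identity is forced by $\sum_z p_z=1$: writing $p_z=c_zq_z$ with $c_z=\sum_{s\le z}a_sb_{z-s}$ and interchanging the non-negative series (Tonelli), one gets $1=q_0\bigl(\sum_s a_s\theta^s\bigr)\bigl(\sum_j b_j\theta^j\bigr)$, while $K=q_0\sum_z a_z\theta^z$; dividing yields $\sum_j b_j\theta^j=1/K$ (and incidentally that both series converge). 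Hence $(\star)$ holds, i.e.\ the Rao--Rubin equality holds.

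The ``only if'' direction is the real work. From $(\star)$ at $k=0$ we have $b_0q_0<\sum_j b_jq_j=q_0/K$ (strict because $b_1q_1>0$), so $b_0<1/K$; since $B(t):=\sum_j b_jt^j$ is continuous and strictly increasing from $b_0$ on its domain, there is (see the obstacle below) a $\theta>0$ with $B(\theta)=1/K$. Setting $v_k:=q_k\theta^{-k}>0$ and $\beta_j:=b_j\theta^j\ge0$, so that $\sum_j\beta_j=1/K$, equation $(\star)$ becomes the homogeneous relation $\sum_{j\ge0}\beta_j(v_{k+j}-v_k)=0$, i.e.\ $v_k=\sum_{j\ge0}(K\beta_j)v_{k+j}$ with weights $K\beta_j\ge0$ summing to $1$ and $K\beta_0,K\beta_1>0$: thus $v$ is a positive harmonic function for the ``rightward'' random walk with step law $(K\beta_j)_j$. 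I would finish by an extremal argument: if $v$ attains its supremum $M$ at an index $k_0$, the relation at $k_0$ forces $v_{k_0+j}=M$ for every $j$ with $\beta_j>0$, in particular $v_{k_0+1}=M$; forward induction gives $v\equiv M$ on $\{k\ge k_0\}$, and a downward induction using $\sum_{j\ge1}\beta_j\ge\beta_1>0$ propagates $v\equiv M$ to all $k$. Then $q_z=q_0\theta^z$, as claimed.

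The main obstacle is making this last step airtight: one must establish that the root $\theta$ exists with $\sum_j b_j\theta^j<\infty$ (ruling out the degenerate possibility $\sup B<1/K$), and that $v$ is bounded and actually attains its supremum — equivalently, that the only positive harmonic functions of the walk above are the constants. This is exactly the content of \cite{shanbhag77}; it can be reached either by first tilting with the \emph{explicit} $\theta_0=(1/K-b_0)/b_1>0$, for which $(\star)$ yields $q_{k+1}\le\theta_0 q_k$ and hence boundedness of $q_k\theta_0^{-k}$, or by invoking a Choquet--Deny-type theorem together with $X_n=k+J_1+\cdots+J_n\to\infty$. Everything else — the two probability computations, the cancellation of $a_k$, and the ``if'' direction — is routine once the interchanges of non-negative series are recorded.
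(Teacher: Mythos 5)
A point of context first: the paper offers no proof of this lemma --- it is quoted (typos and all) from Shanbhag (1977) and used as a black box --- so there is no in-paper argument to measure yours against; what follows assesses your attempt on its own terms. Your reduction of the Rao--Rubin identity to the single functional equation $\sum_{j\ge 0}b_j q_{k+j}=q_k/K$ with $q_z=p_z/c_z$ and $K=\sum_z a_z q_z$ is correct (the cancellation of $a_k>0$ is legitimate, and $0<K\le 1/b_0<\infty$), and the ``if'' direction is complete: the Tonelli factorization $1=\sum_z p_z=q_0\bigl(\sum_s a_s\theta^s\bigr)\bigl(\sum_j b_j\theta^j\bigr)$ together with $K=q_0\sum_z a_z\theta^z$ does force $\sum_j b_j\theta^j=1/K$, which is exactly what $(\star)$ needs.

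The ``only if'' direction, however, is a plan rather than a proof, and the two holes you flag are precisely the nontrivial content of Shanbhag's lemma, so deferring them back to the reference leaves the statement unproved. Concretely: (a) monotonicity and continuity of $B(t)=\sum_j b_j t^j$ do not produce a root of $B(\theta)=1/K$; $B$ may have finite radius of convergence $R$ with $\lim_{t\to R^-}B(t)<1/K$ and jump to $+\infty$ beyond $R$, and your fallback bound $q_{k+1}\le\theta_0 q_k$ only shows $B(\theta_0^-)\ge 1/K$ \emph{in the extended sense}, not that the value $1/K$ is attained at a point where $B$ is finite; ruling this out needs further use of the equation itself. (b) Even granting $\theta$, your extremal argument requires that the bounded sequence $v_k=q_k\theta^{-k}$ \emph{attain} its supremum, which a bounded positive sequence need not do; without attainment the propagation step collapses. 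What is actually needed is that every positive solution of the integrated Cauchy functional equation $v_k=\sum_j (Kb_j\theta^j)\,v_{k+j}$ on $\mathbb{N}$ is constant --- a Choquet--Deny/Lau--Rao-type statement for the one-sided walk (here the conclusion does hold because $b_1>0$ makes $\rho=1$ the unique positive root of $\sum_j Kb_j\theta^j\rho^j=1$), or Shanbhag's own elementary $\inf$/$\sup$ iteration. Supplying either of these, or simply citing the lemma as the paper does, is what separates your sketch from a proof.
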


Next, we give the following CMNB extension of Rao-Rubin characterization. The Shanbhag's result is vital to prove this extension immediately.

\begin{theorem}
Let $X,Y$ be the non-negative integer-valued r.vs such that $\mathrm{P}(X = z) = {P_z}$ with ${P_0} < 1,{P_z} > 0$, and
 \begin{align*}
\mathrm{P}(\left. {Y = k} \right|X = z) = {\left( {{{z} \choose {k}}\frac{{{\rm{B}}({m} + k,{n} + z - k)}}{{{\rm{B}}({m},{n})}}} \right)^\nu }/N(m,n,z,\nu ),
 \end{align*}
then $\mathrm{P}(Y = k) = \mathrm{P}(\left. {Y = k} \right|X = Y),(r = 0,1, \cdots ) $ iff $X$ follows the CMNB distribution with the parameters $(m+n,\nu ,\theta)$ for some $\theta  > 0$.
\end{theorem}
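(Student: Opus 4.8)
The plan is to fit the prescribed conditional law into the Rao--Rubin ``damage model'' framework of Shanbhag (Lemma~\ref{lem:shan}) and then read off the distribution of $X$. The first step is to write $\mathrm{P}(Y=k\mid X=z)$ in (\ref{eq:cnhv}) in the product form $a_k b_{z-k}/c_z$ demanded by that lemma. Expanding the Beta functions via $\mathrm{B}(\alpha,\beta)=\Gamma(\alpha)\Gamma(\beta)/\Gamma(\alpha+\beta)$ gives
\[
\binom{z}{k}\frac{\mathrm{B}(m+k,n+z-k)}{\mathrm{B}(m,n)}
=\frac{z!\,\Gamma(m+n)}{\Gamma(m+n+z)}\cdot\frac{\Gamma(m+k)}{k!\,\Gamma(m)}\cdot\frac{\Gamma(n+z-k)}{(z-k)!\,\Gamma(n)},
\]
so after raising to the power $\nu$ the factor depending on $z$ alone is absorbed into the normalizer $N(m,n,z,\nu)$ and cancels. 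This exhibits $\mathrm{P}(Y=k\mid X=z)=a_k b_{z-k}/c_z$ with $a_k=\bigl(\Gamma(m+k)/(k!\,\Gamma(m))\bigr)^{\nu}$, $b_j=\bigl(\Gamma(n+j)/(j!\,\Gamma(n))\bigr)^{\nu}$ and $c_z=\sum_{s=0}^{z}a_s b_{z-s}$, exactly the form of Lemma~\ref{lem:shan}.

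Next I would verify the hypotheses of Lemma~\ref{lem:shan}: $a_k>0$ for all $k\ge 0$ (a product of positive Gamma-ratios raised to $\nu>0$), $b_0=1>0$, $b_1=n^{\nu}>0$, and $b_j\ge 0$ for $j\ge 2$; the positivity $P_0<1$, $P_z>0$ is assumed. Shanbhag's lemma then yields: since $c_0=a_0b_0=1$, the identity $\mathrm{P}(Y=k)=\mathrm{P}(Y=k\mid X=Y)$ for all $k$ holds if and only if $P_z=P_0\,\theta^{z}c_z$ for some $\theta>0$.

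It remains to evaluate $c_z=\sum_{s=0}^{z}\bigl(\Gamma(m+s)/(s!\,\Gamma(m))\bigr)^{\nu}\bigl(\Gamma(n+z-s)/((z-s)!\,\Gamma(n))\bigr)^{\nu}$ in closed form and to recognize $P_z=P_0\theta^z c_z$ as $\mathrm{CMNB}(m+n,\nu,\theta)$. Here the target is the identity $c_z=\bigl(\Gamma(m+n+z)/(z!\,\Gamma(m+n))\bigr)^{\nu}$; for $\nu=1$ this is the classical Vandermonde (negative-hypergeometric) convolution, and once it is available one gets $P_z\propto\theta^z\bigl(\Gamma(m+n+z)/(z!\,\Gamma(m+n))\bigr)^{\nu}$, whence summing to $1$ forces $P_0=(1-\theta)^{m+n}/C(m+n,\nu,\theta)$, i.e.\ $X\sim\mathrm{CMNB}(m+n,\nu,\theta)$; conversely such an $X$ plainly satisfies $P_z=P_0\theta^z c_z$, so both directions of the ``iff'' follow. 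The main obstacle is precisely this last identification of $c_z$ with the CMNB kernel --- in effect a convolution-stability statement for the CMNB family --- which is the step that must be argued with care about the parameters and about the value of $\nu$; everything else is bookkeeping around Lemma~\ref{lem:shan}.
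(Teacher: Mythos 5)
Your reduction to Lemma \ref{lem:shan} is set up correctly, and in fact more carefully than in the paper: you pull the $z$-only factor $\bigl(z!\,\Gamma(m+n)/\Gamma(m+n+z)\bigr)^{\nu}$ into the normalizer, take $a_k=\bigl(\Gamma(m+k)/(k!\,\Gamma(m))\bigr)^{\nu}$ and $b_j=\bigl(\Gamma(n+j)/(j!\,\Gamma(n))\bigr)^{\nu}$ free of $z$, and let $c_z=\sum_{s=0}^{z}a_s b_{z-s}$ as the lemma forces. Up to that point everything is sound, and Shanbhag's lemma delivers $P_z=P_0\,\theta^{z}c_z$.

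The gap is exactly the step you flag and then leave open: the identity $c_z=\bigl(\Gamma(m+n+z)/(z!\,\Gamma(m+n))\bigr)^{\nu}$. For $\nu=1$ this is the Vandermonde convolution, but for $\nu\neq 1$ it fails, because raising each summand of that convolution to the power $\nu$ does not commute with summation; equivalently, the convolution of $\mathrm{CMNB}(m,\nu,\theta)$ and $\mathrm{CMNB}(n,\nu,\theta)$ is not $\mathrm{CMNB}(m+n,\nu,\theta)$ --- the paper's own computation of $\mathrm{P}(S=s)$ in Section 3.2 exhibits the residual $s$-dependent factor $\sum_{x}\bigl(\binom{s}{x}\mathrm{B}(m+x,n+s-x)/\mathrm{B}(m,n)\bigr)^{\nu}$, which reduces to a constant only when $\nu=1$. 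So the honest output of Lemma \ref{lem:shan} is that $X$ is distributed as that convolution, not as $\mathrm{CMNB}(m+n,\nu,\theta)$. The paper's proof reaches the stated conclusion only by inserting a $z$-dependent factor $\sqrt{L(m,n,z,\nu)}$ into $a_k$ and $b_{z-k}$ and decreeing $c_z=[\Gamma(m+n+z)]^{\nu}/(z!)^{\nu}$; this violates the hypotheses of the lemma (the sequences $a_k$, $b_j$ must not depend on $z$, and $c_z$ must equal $\sum_s a_s b_{z-s}$). In short, your instinct that this is the step needing care is correct, but no argument is supplied, and for general $\nu$ none can be: the identification with $\mathrm{CMNB}(m+n,\nu,\theta)$ holds as stated only when $\nu=1$.
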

\begin{proof}
Since the alternative form of negative hypergeometric distribution can be written as
\[{{z} \choose {k}}\frac{{{\rm{B}}(m + k,n + z - k)}}{{{\rm{B}}(m,n)}} = \frac{{\Gamma (m + k)}}{{{\rm{B}}(m,n)k!}}\frac{{\Gamma (n + z - k)}}{{{\rm{B}}(m,n)(z - k)!}}/\frac{{\Gamma (m + n + z)}}{{{\rm{B}}(m,n)z!}}.\]
From the normalizer in (\ref{eq:cnhv}), let $L(m,n,z,\nu ) = \frac{{N(m,n,z,\nu )}}{{{{[B(m,n)]}^{ - \nu }}}}$, we get
\[\mathrm{P}(\left. {Y = k} \right|X = z) = \frac{{{{[\Gamma (m + k)]}^\nu}}}{{{{(k!)}^\nu}\sqrt {L(m,n,z,\nu )} }} \cdot \frac{{{{[\Gamma (n + z - k)]}^\nu}}}{{{{((z - k)!)}^\nu}\sqrt {L(m,n,z,\nu )} }}/\frac{{{{[\Gamma (m + n + z)]}^\nu}}}{{{{(z!)}^\nu}}} = :\frac{{{a_k}{b_{z - k}}}}{{{c_z}}},\]
where ${a_z},{b_z},{c_z}$ satisfy the conditions in Lemma \ref{lem:shan}.

Then, ${c_z} = \frac{{{{[\Gamma (m + n + z)]}^\nu}}}{{{{(z!)}^\nu}}}$, compareing with form (\ref{eq:cnbd}), we conclude that $X$ is the CMNB distributed with the parameters $(m+n,\nu ,\theta)$.
\end{proof}

\subsection{COM-negative hypergeometric approximate to CMNB}

We have shown that the conditional distribution of CMNB r.v. $X$ given the sum of two CMNB random variables $X$ and $Y$ follows the COM-negative hypergeometric distribution (\ref{eq:cnhv}). Next, we show that the COM-negative hypergeometric distribution (\ref{eq:cnhv}) converges to the CMNB distribution (\ref{eq:cnbd}).

\begin{theorem}
Let $X$ be the COM-negative hypergeometric r.v. with parameters $(z,\nu,m,n)$ and p.m.f. $\mathrm{P}(X=k|z,\nu,m,n)$ given by (\ref{eq:cnhv}),
Assume $m<\infty$, $\frac{z}{{n + z}} = {p^{\frac{1}{\nu }}}$, $p \in (0,1)$, then
 \begin{align*}
\lim_{n\rightarrow\infty} \mathrm{P}(\left. {X = k} \right|z,\nu,m,n) = {\left(\frac {\Gamma(m+k)}{k!\Gamma(m)}  \right)^\nu }p^k(1-p)^{m}\frac {1}{C(m,\nu,p)}.
 \end{align*}
\end{theorem}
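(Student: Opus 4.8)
The plan is to work directly with the ratio representation in (\ref{eq:cnhv}). Using $B(a,b)=\Gamma(a)\Gamma(b)/\Gamma(a+b)$ together with $z!/(z-x)!=\prod_{j=1}^{x}(z-j+1)$ and $\Gamma(n+z-x)/\Gamma(n+z)=\prod_{j=1}^{x}(n+z-j)^{-1}$, one factors the building block as
\[
\binom{z}{x}\frac{B(m+x,n+z-x)}{B(m,n)}
= \frac{\Gamma(m+x)}{x!\,\Gamma(m)}\, g_{x}\;\frac{\Gamma(n+z)\Gamma(m+n)}{\Gamma(n)\Gamma(m+n+z)},\qquad g_{x}:=\prod_{j=1}^{x}\frac{z-j+1}{n+z-j}.
\]
The last factor does not depend on $x$, hence cancels in (\ref{eq:cnhv}); writing $b_{x}:=\Gamma(m+x)/(x!\,\Gamma(m))$ this gives the clean form
\[
\mathrm{P}(X=k\mid z,\nu,m,n)=\frac{b_{k}^{\nu}\,g_{k}^{\nu}}{\sum_{x=0}^{z} b_{x}^{\nu}\,g_{x}^{\nu}},
\]
so it remains to pass to the limit in the numerator and in the normalizing sum.

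For the numerator, fix $k$. Since $z/(n+z)=p^{1/\nu}$ is held constant, $z=\frac{p^{1/\nu}}{1-p^{1/\nu}}\,n\to\infty$ as $n\to\infty$, and for each fixed $j$ we have $\frac{z-j+1}{n+z-j}\to p^{1/\nu}$; hence $g_{k}\to p^{k/\nu}$ and $b_{k}^{\nu}g_{k}^{\nu}\to\bigl(\tfrac{\Gamma(m+k)}{k!\,\Gamma(m)}\bigr)^{\nu}p^{k}$, and the same pointwise limit holds for every index.

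For the normalizing sum I would invoke dominated convergence on the counting measure over $\mathbb{N}$, viewing $\sum_{x=0}^{z}b_{x}^{\nu}g_{x}^{\nu}=\sum_{x\ge0}\mathbf{1}\{x\le z\}\,b_{x}^{\nu}g_{x}^{\nu}$. The key estimate is that, for $n\ge1$, each factor $\frac{z-j+1}{n+z-j}$ is $\le1$ and non-increasing in $j$ (its $j$-derivative is $(1-n)/(n+z-j)^{2}\le0$), so $g_{x}\le\bigl(\tfrac{z}{n+z-1}\bigr)^{x}$ for $0\le x\le z$; moreover $\tfrac{z}{n+z-1}$ decreases to $p^{1/\nu}$ as $n\to\infty$, so there exist $q\in(p^{1/\nu},1)$ and $N_{0}$ with $g_{x}\le q^{x}$ for all $n\ge N_{0}$. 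Thus $b_{x}^{\nu}g_{x}^{\nu}\le b_{x}^{\nu}(q^{\nu})^{x}$ for $n\ge N_{0}$, and $\sum_{x\ge0}b_{x}^{\nu}(q^{\nu})^{x}=C(m,\nu,q^{\nu})/(1-q^{\nu})^{m}<\infty$ because $q^{\nu}\in(0,1)$ --- i.e.\ this is merely convergence of a CMNB normalizer. Dominated convergence then yields $\sum_{x=0}^{z}b_{x}^{\nu}g_{x}^{\nu}\to\sum_{x\ge0}b_{x}^{\nu}p^{x}=C(m,\nu,p)/(1-p)^{m}$.

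Combining the two limits,
\[
\lim_{n\to\infty}\mathrm{P}(X=k\mid z,\nu,m,n)=\frac{b_{k}^{\nu}p^{k}}{C(m,\nu,p)/(1-p)^{m}}=\Bigl(\frac{\Gamma(m+k)}{k!\,\Gamma(m)}\Bigr)^{\nu}p^{k}(1-p)^{m}\frac{1}{C(m,\nu,p)},
\]
which is exactly the CMNB$(m,\nu,p)$ p.m.f.\ of (\ref{eq:cnbd}). The main obstacle is the interchange of the limit with a sum whose length $z$ grows with $n$; everything hinges on the uniform summable bound $g_{x}\le q^{x}$, which itself follows from the monotonicity of $j\mapsto\frac{z-j+1}{n+z-j}$ together with the monotone convergence $\frac{z}{n+z-1}\downarrow p^{1/\nu}$.
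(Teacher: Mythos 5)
Your proof is correct, and it takes a genuinely different route from the paper's. The paper expands $N(m,n,z,\nu)\,\mathrm{P}(X=k\mid z,\nu,m,n)$ and evaluates the limit of the Gamma-function ratios $\frac{\Gamma(n+z-k)}{\Gamma(m+n+z-k)}\cdot\frac{\Gamma(m+n)}{\Gamma(n)}$ via Stirling's formula, obtaining $b_k^\nu p^k(1-p)^m$ times a $k$-independent constant, and then simply \emph{asserts} that the limit of the rescaled normalizing constant exists, concluding ``by comparison'' with the CMNB form. You instead factor the negative-hypergeometric weight as $b_x\,g_x$ times an $x$-independent Gamma ratio that cancels in the normalized p.m.f., so no Stirling asymptotics are needed at all: the numerator limit $g_k\to p^{k/\nu}$ is an elementary product of $k$ convergent fractions. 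More importantly, you supply the step the paper glosses over: since the normalizing sum $\sum_{x=0}^{z}b_x^\nu g_x^\nu$ has a number of terms growing with $n$, pointwise convergence of the terms does not by itself give convergence of the sum (mass could in principle escape to infinity), and your dominated-convergence argument with the uniform geometric bound $g_x\le q^x$, $q\in(p^{1/\nu},1)$, derived from the monotonicity of $j\mapsto\frac{z-j+1}{n+z-j}$, closes exactly this gap and identifies the limit of the normalizer as $C(m,\nu,p)/(1-p)^m$ explicitly. The only cosmetic caveat is that $z$ is tied to $n$ through $z/(n+z)=p^{1/\nu}$ and should be treated as (an integer near) $\frac{p^{1/\nu}}{1-p^{1/\nu}}n$, but this does not affect either the bound $\frac{z}{n+z-1}\le q$ for large $n$ or the conclusion. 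Overall your argument is both more elementary and more rigorous than the one in the paper.
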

\begin{proof}
First, we multiply the p.m.f of COM-negative hypergeometric distribution by the normalization constant (see (\ref{eq:cnhv})):
 \begin{align*}
& N(m,n,z,\nu )\mathrm{P}(X = k|z,\nu ,m,n)
= {\left( {\frac{{z!}}{{k!(z - k)!}} \cdot \frac{{\Gamma (m + k)\Gamma (n + z - k)}}{{\Gamma (m + n + z)}} \cdot \frac{{\Gamma (m + n)}}{{\Gamma (m)\Gamma (n)}}} \right)^\nu }\\
 &= {\left( {\frac{\Gamma (m + k)}{{k!\Gamma (m )}}}\right)^\nu
 \left( {\frac{{z(z-1)\cdots (z-k+1)\Gamma (n+z - k)\Gamma (n + m)}}
 {{(m+n+z-1)(m+n+z-2)\cdots (m+n+z-k)\Gamma (m+n+z-k)\Gamma (n)}}} \right)^\nu}.
 \end{align*}
Using the Stirling's formula for the Gamma function, $\Gamma (z) = \sqrt {\frac{{2\pi }}{z}} {\mkern 1mu} {\left( {\frac{z}{e}} \right)^z}\left( {1 + O\left( {\frac{1}{z}} \right)} \right)$, it yields

 \begin{align*}
& \frac{{\Gamma (n + z - k)}}{{\Gamma (m + n + z - k)}} \cdot \frac{{\Gamma (m + n)}}{{\Gamma (n)}}\\
& = \frac{{\sqrt {\frac{{2\pi }}{{n + z - k}}} {{\left( {\frac{{n + z - k}}{e}} \right)}^{n + z - k}}\sqrt {\frac{{2\pi }}{{n + m}}} {{\left( {\frac{{n + m}}{e}} \right)}^{n + m}}\left( {1 + O\left( {\frac{1}{{n + z - k}}} \right)} \right)\left( {1 + O\left( {\frac{1}{{n + m}}} \right)} \right)}}{{\sqrt {\frac{{2\pi }}{n}} {{\left( {\frac{n}{e}} \right)}^n}\sqrt {\frac{{2\pi }}{{n + m + z - k}}} {{\left( {\frac{{n + m + z - k}}{e}} \right)}^{n + m + z - k}}\left( {1 + O\left( {\frac{1}{{n + m + z - k}}} \right)} \right)\left( {1 + O\left( {\frac{1}{n}} \right)} \right)}}\\
&=\sqrt {\frac{{n(n + m + z - k)}}{{(n + z - k)(m + n)}}} {\left( {\frac{{n + m}}{{m + n + z - k}}} \right)^m}\frac{{{{\left( {1 + \frac{m}{n}} \right)}^n}}}{{{{\left( {1 + \frac{m}{{n + z - k}}} \right)}^{n + z - k}}}}\frac{{\left( {1 + O\left( {\frac{1}{{n + z - k}}} \right) + O\left( {\frac{1}{{n + m}}} \right)} \right)}}{{\left( {1 + O\left( {\frac{1}{{n + m + z - k}}} \right) + O\left( {\frac{1}{n}} \right)} \right)}}.
\end{align*}
From the conditions, we have $\frac {m+n}{m+n+z-k}\rightarrow 1- p^{\frac{1}{\nu}}$ as $n\rightarrow\infty$. Then
\[\mathop {\lim }\limits_{n \to \infty } {\left( {\frac{{\Gamma (n + z - k)}}{{\Gamma (m + n + z - k)}} \cdot \frac{{\Gamma (m + n)}}{{\Gamma (n)}}} \right)^\nu } = {\left( {1 - {p^{1/\nu }}} \right)^{m\nu }} = {\left( {1 - p} \right)^m}{\left( {\frac{{{{(1 - {p^{1/\nu }})}^v}}}{{1 - p}}} \right)^m},\]
and
\[\mathop {\lim }\limits_{n \to \infty } {\left( {\frac{{z(z - 1)\cdots (z - k + 1)}}{{(m + n + z - 1)(m + n + z - 2)\cdots (m + n + z - k)}}} \right)^\nu } = {p^k}.\]

Finally, we obtain
\[\mathop {\lim }\limits_{n \to \infty } \mathrm{P}(X = k|z,\nu ,m,n) = {\left( {\frac{{\Gamma (m + k)}}{{k!\Gamma (m)}}} \right)^\nu }{p^k}{(1 - p)^m} \cdot \mathop {\lim }\limits_{n \to \infty } {[N(m,n,z,\nu )]^{ - 1}}{\left( {\frac{{{{(1 - {p^{1/\nu }})}^\nu }}}{{1 - p}}} \right)^m}\]
Note that the normalizing constant $ \mathop {\lim }\limits_{n \to \infty } {[N(m,n,z,\nu )]^{ - 1}}{\left( {\frac{{{{(1 - {p^{1/\nu }})}^\nu }}}{{1 - p}}} \right)^m }$ exsists and does not depend on $k$. Therefore, $\mathop {\lim }\limits_{n \to \infty } \mathrm{P}(X = k|z,\nu ,m,n)$
is the p.m.f. of ${\rm{CMNB}}(m,\nu,p)$ via comparing with (\ref{eq:cnbd}).
\end{proof}

\cite{borges14} showed that the COM-Poisson distribution is the limiting distribution of the COM-binomial distribution. Let $X_{m} \sim {\rm{CMB}}(m,p,\nu)$(see (\ref{eq:cb})) and $\lambda = m^{\nu} p $ for $\nu \geq 0$, then
$$
\lim _{m \rightarrow \infty} \frac{ {{{{{m} \choose {k}}}^\nu}{p^k}{{(1 - p)}^{m - k}}} }{N(m,p,\nu)}
= \frac{\lambda ^{k}}{(k!)^{\nu}} \cdot Z^{-1} (\lambda, \nu)
$$
namely $ \lim _{m \rightarrow \infty} X _{m} \sim {\rm{CMP}}(\lambda, \nu)$.

To sum up, judging from the above mentioned theorems of limiting distribution, we may naturally draw the relationships among some COM-type distributions in Figure \ref{relation}.
\begin{figure}[H]
\centering
\includegraphics[width=0.75\textwidth]{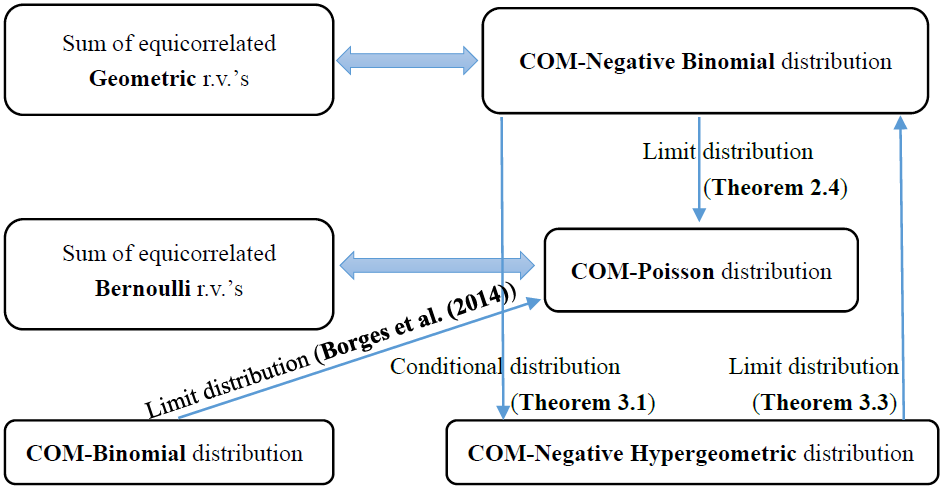}
\caption{Relationships among some COM-type distributions} \label{relation}
\end{figure}

\subsection{Functional operator characterization by Stein's identity}
Using recursive formula (\ref{eq:rf}) for CMNB distribution, an extension of functional operator characterization which arises from the negative binomial approximation literatures by Stein-Chen method is presented. The following Stein's lemma also can be derived from the work of \cite{brown01} who give a large class of approximation distribution which is the equilibrium distribution of a birth-death process with arbitrary arrival rate and service rate.

The functional operator characterization is well-known in
the Stein-Chen method literatures. Lemma \ref{lem:foc} extends the Lemma 1 in \cite{brown99} for negative binomial approximation.

\begin{lemma}\label{lem:foc}
The r.v. $W$ has distribution ${\rm{CMNB}}(r,\nu ,p)$iff the equation
\begin{equation}
\E[{W^\nu}g(W) - p{(W + r)^\nu }g(W + 1)] = 0
\end{equation}
 holds for any bounded function $g: \Bbb{Z}^{+} \to \Bbb{R}$.
\end{lemma}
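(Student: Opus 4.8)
The plan is to derive both implications directly from the recursive formula (\ref{eq:rf}), which I rewrite in the equivalent form
$k^\nu p_k = p\,(k-1+r)^\nu p_{k-1}$ for $k\ge 1$, where $p_k=\mathrm{P}(W=k)$; this single identity, relating the CMNB weights to a shift, is the only structural fact about the law that the argument needs. Throughout, $W$ is a nonnegative integer-valued random variable and $g$ ranges over bounded functions on the nonnegative integers.

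For the ``only if'' direction, assume $W\sim{\rm CMNB}(r,\nu,p)$. Since the ratio of consecutive probabilities tends to $p<1$, the tail of $W$ decays geometrically ($p_k\le c\,\rho^{k}$ for any fixed $\rho\in(p,1)$ and all large $k$), so $\E[W^\nu]<\infty$ and $\E[(W+r)^\nu]<\infty$; combined with $\|g\|_\infty<\infty$ this makes every series below absolutely convergent, hence freely re-indexable. Then I would write $\E[W^\nu g(W)]=\sum_{k\ge 0}k^\nu g(k)p_k$, discard the $k=0$ term (it vanishes because $\nu>0$), substitute $k^\nu p_k=p(k-1+r)^\nu p_{k-1}$, and shift the summation index $k\mapsto k+1$ to land on exactly $p\,\E[(W+r)^\nu g(W+1)]$. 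Subtracting gives the claimed identity.

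For the ``if'' direction, assume the identity holds for all bounded $g$ and test it against the indicators $g=\mathbf{1}_{\{j\}}$, $j\ge 1$. Computing the two expectations term by term (each is a single summand) yields $j^\nu p_j=p\,(j-1+r)^\nu p_{j-1}$ for every $j\ge1$, i.e.\ $W$ obeys the CMNB recursion. Iterating from $j=1$ gives $p_j=p_0\,p^{\,j}\prod_{i=1}^{j}\big((i-1+r)/i\big)^\nu=p_0\,p^{\,j}\big(\Gamma(r+j)/(j!\,\Gamma(r))\big)^\nu$, using $\prod_{i=1}^{j}(i-1+r)=\Gamma(r+j)/\Gamma(r)$. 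The constraint $\sum_j p_j=1$ forces $p_0>0$ and $p_0=(1-p)^r/C(r,\nu,p)$ (the series defining $C$ converges because $p<1$), so comparison with (\ref{eq:cnbd}) identifies the law of $W$ as ${\rm CMNB}(r,\nu,p)$.

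The algebra and the index shift are routine; the only points requiring care are (i) the convergence/interchange justification in the forward direction, dispatched by the geometric tail bound together with boundedness of $g$, and (ii) the boundary behaviour at $k=0$, where the factor $k^\nu$ annihilates the $k=0$ contribution precisely because $\nu>0$ while $(W+r)^\nu$ stays finite there because $r>0$. An essentially equivalent route, which I would mention, is to invoke \cite{brown01}: the CMNB law is the stationary distribution of the birth--death process with birth rate $\lambda_k=c\,p\,(k+r)^\nu$ and death rate $\mu_k=c\,k^\nu$ (cf.\ Remark 2), and the stated equation is exactly the associated Stein characterization once the common constant $c$ is cancelled.
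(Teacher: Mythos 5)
Your proof is correct and takes essentially the same route as the paper's: the forward direction is the same index-shift computation based on the recursion $k^\nu p_k = p\,(k-1+r)^\nu p_{k-1}$, and the converse tests the identity against indicator functions to recover that recursion. You simply make explicit the details the paper leaves implicit (the geometric tail bound justifying absolute convergence, the vanishing of the $k=0$ term, and the iteration plus normalization that pins down the law).
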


\begin{proof}
  Sufficiency: When $W$ is CMNB distributed in the form of
  \eqref{eq:cnbd}, $f$ is a bounded function. Then we have
  \begin{align*}
\E[{W^\nu}g(W)] &= \sum\limits_{k = 0}^\infty  {{k^v}} g(k){\left( {\frac{{\Gamma (k + r)}}{{k!\Gamma (r)}}} \right)^\nu }\frac{{{p^k}{{(1 - p)}^r}}}{{C(r,\nu ,p)}}= \sum\limits_{k = 1}^\infty  {p{{\left( {k - 1 + r} \right)}^\nu }g(k)} {\left( {\frac{{\Gamma (k - 1 + r)}}{{(k - 1)!\Gamma (r)}}} \right)^\nu }\frac{{{p^{k - 1}}{{(1 - p)}^r}}}{{C(r,\nu ,p)}}\\
 &= \E[p{(W + r)^\nu }g(W + 1)].
\end{align*}
Hence $\E g(W) = 0$.

Necessity: $\E g(W) = 0$ for all bounded functions. Letting $f(W) = {1_{(W = k)}}(W)$, a simple computation
shows the recursive formula (\ref{eq:rf}). This verifies that  $W \sim {\rm{CMNB}}(r,\nu ,p)$.
\end{proof}

\section{Generating CMNB-distributed random variables}
\label{generation}
The inverse transform sampling is a basic method in pseudo-random number sampling, namely for generating sample numbers at random from any probability distribution given its cumulative distribution function. The following lemmas are the basis of inverse method.
\begin{lemma}\label{lemma1}
  Suppose $F(x)$ is the cumulative distribution function of random variable $\xi$, then $\theta = F(\xi)$ is uniformly distributed in interval $[0,1]$.
\end{lemma}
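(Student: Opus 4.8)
The plan is to prove the classical probability integral transform. First I would observe that the statement requires $F$ to be continuous for the conclusion as stated; more generally, if $F$ is continuous then $\theta = F(\xi)$ is uniform on $[0,1]$, whereas if $F$ has jumps one must work with the generalized inverse. Since the lemma is invoked only to justify inverse transform sampling and the target CMNB distribution is discrete, I expect the intended reading is the standard one, so I would prove the continuous case and remark afterward that for discrete $F$ one uses the quantile function $F^{-1}(u) = \inf\{x : F(x) \ge u\}$ instead.

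The key steps, in order, would be: (1) note $0 \le \theta \le 1$ almost surely, so it suffices to compute $\mathrm{P}(\theta \le u)$ for $u \in (0,1)$; (2) define the generalized inverse $F^{-1}(u) = \inf\{x : F(x) \ge u\}$ and establish the Galois-type equivalence $F^{-1}(u) \le x \iff u \le F(x)$, valid when $F$ is right-continuous; (3) use continuity of $F$ to get $F(F^{-1}(u)) = u$; (4) conclude
\[
\mathrm{P}(\theta \le u) = \mathrm{P}(F(\xi) \le u) = \mathrm{P}(\xi \le F^{-1}(u)) = F(F^{-1}(u)) = u,
\]
which is exactly the distribution function of the uniform law on $[0,1]$, hence $\theta \sim U[0,1]$.

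The main obstacle is step (2)--(3): carefully justifying the equivalence $\{F(\xi) \le u\} = \{\xi \le F^{-1}(u)\}$ up to a null set and the identity $F(F^{-1}(u)) = u$. The subtlety is that without continuity $F$ can jump over the level $u$, so $F(F^{-1}(u))$ may strictly exceed $u$ and the event $\{F(\xi)=u\}$ can carry positive mass; handling this requires the monotonicity and right-continuity of $F$ and a short argument that $\{\omega : F(\xi(\omega)) \le u\}$ and $\{\omega : \xi(\omega) \le F^{-1}(u)\}$ differ only on $\{F(\xi) = u, \xi > F^{-1}(u)\}$, which is empty when $F$ is strictly increasing at that level. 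Once this set-theoretic identity is pinned down the rest is immediate, and I would close with the remark that in the discrete setting relevant to CMNB one instead samples $\xi = F^{-1}(\theta)$ with $\theta \sim U[0,1]$, which is the genuinely useful direction for simulation.
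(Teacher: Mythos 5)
Your proof is correct, but note that the paper does not actually prove this lemma at all: it simply declares Lemma 4.1 and Lemma 4.2 ``trivial'' and refers the reader to textbooks on computational statistics. So there is nothing in the paper to compare against step by step; what you have supplied is the standard probability integral transform argument via the generalized inverse $F^{-1}(u)=\inf\{x:F(x)\ge u\}$, the Galois equivalence $F^{-1}(u)\le x \iff u\le F(x)$, and the identity $F(F^{-1}(u))=u$ for continuous $F$, and each of those steps is handled properly, including the observation that $\{F(\xi)\le u\}$ and $\{\xi\le F^{-1}(u)\}$ can differ only on a flat stretch of $F$ at level $u$, which is a null event. Your added caveat is in fact a substantive correction to the paper: as stated, the lemma is false without continuity of $F$, since for a discrete $\xi$ the variable $F(\xi)$ takes only countably many values and cannot be uniform on $[0,1]$; only the reverse direction (Lemma 4.2, that $F^{-1}(\theta)$ with $\theta\sim U[0,1]$ has distribution $F$) holds for arbitrary distribution functions, and that is the direction actually used in the inverse-transform sampling of the discrete CMNB distribution in Section 4. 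You correctly isolate that distinction, which the paper glosses over.
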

\begin{lemma}\label{lemma2}
  Suppose $\theta\sim U[0,1]$, for any cumulative distribution function $F(x)$, let $\xi=F^{-1}(\theta)$, then $\xi\sim F(x)$, that is, $F(x)$ is the distribution function of random variable $\xi$.
\end{lemma}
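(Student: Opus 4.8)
The plan is to prove Lemma \ref{lemma2} by computing directly the cumulative distribution function of $\xi = F^{-1}(\theta)$. Since the lemma is stated for an \emph{arbitrary} cumulative distribution function $F$ — and the application in Section \ref{generation} is to the discrete CMNB law, whose $F$ is a step function that is nowhere strictly increasing — one first has to fix what $F^{-1}$ means: it should be read as the generalized inverse (quantile function)
\[
F^{-1}(u) := \inf\{x \in \mathbb{R} : F(x) \ge u\}, \qquad u \in (0,1).
\]
I would begin by discarding the null event $\{\theta \in \{0,1\}\}$, on which $F^{-1}$ may be $\pm\infty$ or undefined; since $\theta \sim U[0,1]$ this event has probability zero and does not affect the distribution of $\xi$.

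The heart of the argument is the equivalence
\[
F^{-1}(u) \le x \iff u \le F(x), \qquad u \in (0,1),\ x \in \mathbb{R}.
\]
For the reverse implication, if $u \le F(x)$ then $x \in \{y : F(y) \ge u\}$, hence $x$ is at least the infimum $F^{-1}(u)$. For the forward implication one uses that $F$ is nondecreasing and right-continuous: the set $\{y : F(y) \ge u\}$ is then a closed half-line $[F^{-1}(u),\infty)$, so $F(F^{-1}(u)) \ge u$, and $F^{-1}(u) \le x$ gives $F(x) \ge F(F^{-1}(u)) \ge u$ by monotonicity. Granting this, the computation is immediate: for every $x \in \mathbb{R}$,
\[
\mathbb{P}(\xi \le x) = \mathbb{P}\big(F^{-1}(\theta) \le x\big) = \mathbb{P}\big(\theta \le F(x)\big) = F(x),
\]
the last step because $\theta$ is uniform on $[0,1]$ and $F(x) \in [0,1]$. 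As $x$ was arbitrary, $\xi$ has distribution function $F$, which is the assertion.

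The main obstacle is precisely the equivalence $F^{-1}(u) \le x \iff u \le F(x)$: this is the only point where the monotonicity and right-continuity of a distribution function are genuinely invoked, and it needs a little care because $F$ need not be continuous or strictly increasing; everything else is bookkeeping. For the concrete use in generating CMNB variates, this generalized inverse reduces to the familiar rule ``output $n$ whenever $\sum_{k<n} p_k < \theta \le \sum_{k \le n} p_k$'', so no additional work is required beyond specializing the general statement.
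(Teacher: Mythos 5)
Your proof is correct. Note that the paper itself supplies no argument here: it simply declares the proofs of Lemma \ref{lemma1} and Lemma \ref{lemma2} ``trivial'' and refers the reader to computational-statistics textbooks, so there is no authorial proof to compare against. What you have written is exactly the standard textbook argument, and you have handled the one point that genuinely requires care for this paper's application: since the CMNB cumulative distribution function is a step function, $F^{-1}$ must be read as the generalized inverse $F^{-1}(u)=\inf\{x: F(x)\ge u\}$, and the equivalence $F^{-1}(u)\le x \iff u\le F(x)$ rests on monotonicity together with right-continuity (which guarantees $\{y:F(y)\ge u\}$ is a closed half-line and hence that the infimum is attained). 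Your final remark correctly identifies how this specializes to the search rule $F(j)<\theta\le F(j+1)$ used in the sampling algorithm of Section \ref{generation}. The only cosmetic quibble is that discarding the event $\{\theta\in\{0,1\}\}$ deserves the brief justification you give (it is a null event), but nothing more is needed.
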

The proof of Lemma \ref{lemma1} and \ref{lemma2} is trivial and can be found in text books of computational statistics.

Consider generating random sample from CMNB$(r,\nu,p)$, since the p.m.f. is
\begin{equation}
{\rm{P}}(X = k) =
\frac {{{{(\frac{{\Gamma (r + k)}}{{k!{\mkern 1mu} \Gamma (r)}})}^\nu }{p^k}{{(1 - p)}^r}}}
      {{\sum\limits_{i = 0}^\infty  {{{(\frac{{\Gamma (r + i)}}{{i!{\mkern 1mu} \Gamma (r)}})}^\nu }} {p^i}{{(1 - p)}^r}}}
={{\left(\frac{{\Gamma (r + k)}}{{k!{\mkern 1mu} \Gamma (r)}}\right)}^\nu }
{{p^k}{{(1 - p)}^r}}
\frac{1}{{C(r,\nu ,p)}},\quad (k = 0,1,2, \ldots ).
\end{equation}
It follows easily that
\begin{equation}\label{recursion}
\left\{
  \begin{aligned}
    \text{P}(X=0) &= (1 - p)^r\frac{1}{C(r,\nu,p)}\\
    \frac{\text{P}(X=k)}{\text{P}(X=k-1)} &= p\cdot (\frac{k+r-1}{k})^{\nu},\quad (k = 1,2, \ldots ).
  \end{aligned}
\right.
\end{equation}
Multiplying these equations yields
\begin{align*}
  \text{P}(X=k) &= p\cdot (\frac{k+r-1}{k})^{\nu}\cdot \text{P}(X=k-1) \\
                &= p^2\cdot (\frac{k+r-1}{k}\frac{k+r-2}{k-1})^{\nu}\cdot \text{P}(X=k-2) \\
                &= p^k\cdot (\frac{k+r-1}{k}\frac{k+r-2}{k-1}\cdots \frac{r}{1})^{\nu}\cdot \text{P}(X=0).
\end{align*}
Thus, for any non-negative $k$, the distribution function $F(k) = \sum_{i=0}^{k}\text{P}(X=i)$ can be easily calculated.
By Lemma \ref{lemma2}, the inverse method alternates between the following steps:
{\small
\begin{description}
  \item[Step 1:] Let $j=0$, $\text{P}(X=j)=(1 - p)^r/C(r,\nu,p)$;
  \item[Step 2:] Generate a random sample $\theta\sim U[0,1]$, and fix its value;
  \item[Step 3:] Let $F(j+1) = F(j) + \text{P}(X=j+1)$;
  \item[Step 4:] If $F(j)<\theta\leq F(j+1)$, then $x=j+1$; Otherwise, let $j=j+1$, and back to Step 3;
\end{description}}

We conclude that, $x$ is the random number drawn from the CMNB distribution with parameters $(r,\nu,p)$.

\section{Estimations}

\subsection{Estimation by three recursive formulas}\label{subsection:recursive}
The method of recursive formula estimation (or refer as \emph{ratio regression}, see \cite{Bohning16}) which give crude estimations of the estimated parameters, is originated with \cite{shmueli05}, and extended by \cite{imoto14}. However, this crude estimations can be put into the maximum likelihood estimation (MLE) with Newton-Raphson algorithm as initial values.

First, we note that the p.m.f. of CMNB has the following recursive formula:
\begin{equation}\label{eq:estimation1}
  \frac{P_{k+1}}{P_{k}}=p\cdot(\frac{k+r}{k+1})^\nu,\quad(k=0,1,\ldots).
\end{equation}

Second, applied with the expression of log-concave~(log-convex), we have
\begin{equation}\label{eq:estimation2}
  \log(\frac{P_{k}P_{k+2}}{P_{k+1}^2})=\nu\log(\frac{k+r+1}{k+2}\cdot\frac{k+1}{r+k}).
\end{equation}

Replace ``$k$" by ``$k+1$" in \eqref{eq:estimation2}. Then we obtain
\begin{equation}\label{eq:estimation3}
  \frac{\log(\frac{P_{k}P_{k+2}}{P_{k+1}^{2}})}{\log(\frac{P_{k+1}P_{k+3}}{P_{k+3}^{2}})}=\frac{\log(\frac{k+r+1}{k+2}\cdot\frac{k+1}{r+k})}{\log(\frac{k+r+2}{k+3}\cdot\frac{k+2}{r+k+1})}.
\end{equation}

Thus, $(\nu,r,p)$ could be sequentially solved by the system of equations \eqref{eq:estimation1}, \eqref{eq:estimation2} and \eqref{eq:estimation3}, if we have the sample p.m.f.

\subsection{Maximum likelihood estimation}
\label{sec:mle}
Let r.v. $X$ be distributed as the CMNB distribution with parameters $\theta=(r,\,\nu,\,p)$. We consider the MLE in the case parameters $r$, $\nu$, $p$ are unknown. The likelihood function is
\begin{equation}
  \begin{aligned}
  L(\theta) &= \prod_{i=1}^{n}\mathrm{P}(X=x_i) = \prod_{i=1}^{n} (\frac{\Gamma(r+x_i)}{x_i!})^\nu p^{x_i} \Gamma(r)^{-\nu} (1-p)^r C(r,\nu,p)^{-1}\\
               &= [\prod_{i=1}^{n}{\frac{\Gamma(r+x_i)}{x_i!}}]^\nu p^{\sum_{i=1}^{n}x_i} \Gamma(r)^{-n\nu} (1-p)^{nr} C(r,\nu,p)^{-n},
\end{aligned}
\end{equation}
where $n$ is the sample size, $x_1,x_2,\ldots,x_n$ are the observed values, and the log-likelihood function is given by
\begin{align}
  \log L(\theta) &= \nu\sum_{i=1}^{n}\log\frac{\Gamma(r+x_i)}{x_i!} + \log(p)\sum_{i=1}^{n}x_i  + nr\log(1-p) - n\nu\log(\Gamma(r)) - n\log(C(r,\nu,p))
\end{align}
To find the maximum point, for $\log L(\theta)$, we take the partial derivatives with respect to $r$, $\nu$ and $p$ and set them equal to zero, hence the likelihood equation is given by
\begin{equation}\label{mle}
\left\{
  \begin{aligned}
    F_1(\theta) &= \frac{\partial\log L(\theta)}{\partial r} = \nu\sum_{i=1}^{n}\psi(r+x_i) + n\log(1-p) - n\nu\psi(r) - n\frac{\partial \log(C(r,\nu,p))}{\partial r}=0;\\
    F_2(\theta) &=\frac{\partial\log L(\theta)}{\partial \nu} = \sum_{i=1}^{n}\log\frac{\Gamma(r+x_i)}{x_i!} - n\log(\Gamma(r)) - n\frac{\partial \log(C(r,\nu,p))}{\partial \nu}=0;\\
    F_3(\theta) &=\frac{\partial\log L(\theta)}{\partial p} = \frac{\sum_{i=1}^{n}x_i}{p} + \frac{nr}{p-1} - n\frac{\partial \log(C(r,\nu,p))}{\partial p}=0;
  \end{aligned}
\right.
\end{equation}
where $\psi(\alpha)=\Gamma^{'}(\alpha)/\Gamma(\alpha)$ is called the digamma function.
The analytical solutions of the above likelihood equations are not tractable, therefore, numerical optimization method is used to obtain the maximum likelihood estimates.
Let $F(\theta)=\big(F_1(\theta),\,F_2(\theta),\,F_3(\theta)\big)^T,$
 then the Fisher information matrix $I(r,\nu,p)$ would be the Jacobin matrix of $F(\theta)$. Given trial values $\theta_k=(r_k,\,\nu_k,\,p_k)$, applied with the scoring method for solving \eqref{mle},  we can update to $\theta_{k+1}=(r_{k+1},\,\nu_{k+1},\,p_{k+1})$ as
\begin{equation}
\left( \begin{array}{l}
{r_{k + 1}}\\
{\nu _{k + 1}}\\
{p_{k + 1}}
\end{array} \right) = \left( \begin{array}{l}
{r_k}\\
{\nu _k}\\
{p_k}
\end{array} \right) + {[I(r,\nu ,p)]^{ - 1}}F({r_k},{\nu _k},{p_k}).
\end{equation}
For the initial point $\theta_0=(r_0,\, \nu_0,\, p_0)$, we can choose the crude estimated parameters introduced in Subsection \ref{subsection:recursive}.

\subsection{Kolmogorov-Smirnov and Chi-squared goodness-of-fit test}
In goodness-of-fit test of discrete distributions, Pearson's chi-squared test is a popular choice to check distribution model, which is usually better than the others. The larger p-value, the better goodness-of-fit would arise from the assumed models. However, when the data come from an assumed model, with $r$ unknown parameters $\theta=({\theta _1}, \cdots ,{\theta _r})$ and the null hypothesis ${H_0}$, what we are interested in is
\[{H_0}:F(x) = {F_0}(x;{\theta _1}, \cdots ,{\theta _r}),\]
 The $\chi^2$ statistic has the following limiting distribution:
\[\eta  = \sum\limits_{i = 1}^m {\frac{{{{({n_i} - n{{\hat p}_{0i}})}^2}}}{{n{{\hat p}_{0i}}}}} \mathop  \to \limits^d {\chi ^2}(m - 1 - r),\]
where $n_i$ is the number of class $i$, and $m$ classes ${A_i}~(i = 1,2, \cdots ,m)$ which are the sub-division of $n$ samples, it satisfies that:
\[\sum\nolimits_{i = 1}^m {{n_i}}  = n,( - \infty ,\infty ) =  \cup _{i = 1}^m{A_i},{\hat p_{i0}} = :\frac{{\# \{ x \in {A_i})\} }}{n}.\]
We also have the another handy expression for calculating $\chi^2$ statistic: $\eta  = \sum\limits_{i = 1}^m {\frac{{n_i^2}}{{n{{\hat p}_{0i}}}} - n} $.

One limitation of $\chi^2$ statistic is that, if degree of freedom are too small, the approximation to the $\chi^2$ distribution would fail. For example, $k=m - 1 - r = 0,1,2$. Since $\chi^2$ test of small degree of freedom~(denotes it by $k$) did not have enough power (namely ${\mathrm{P}_{{\theta}}}(\eta  > {\eta _0})$ is not large enough). That is to say, for a $\chi^2$ r.v. $\eta $, the p-value is defined by
\[p({\eta _0}) = {\mathrm{P}_{{\theta}}}(\eta  > {\eta _0}) = \int_{{\eta _0}}^{ + \infty } {\frac{1}{{{2^{{\textstyle{k \over 2}}}}\Gamma ({\textstyle{k \over 2}})}}{x^{k - 1}}{e^{ - \frac{x}{2}}}} dx,\]
which tends to be small when $k$ varies from 3 to 1. An attempt at this has been made in the figure below.

\begin{figure}[H]
\centering
\includegraphics[height=0.5\textwidth=0.7]{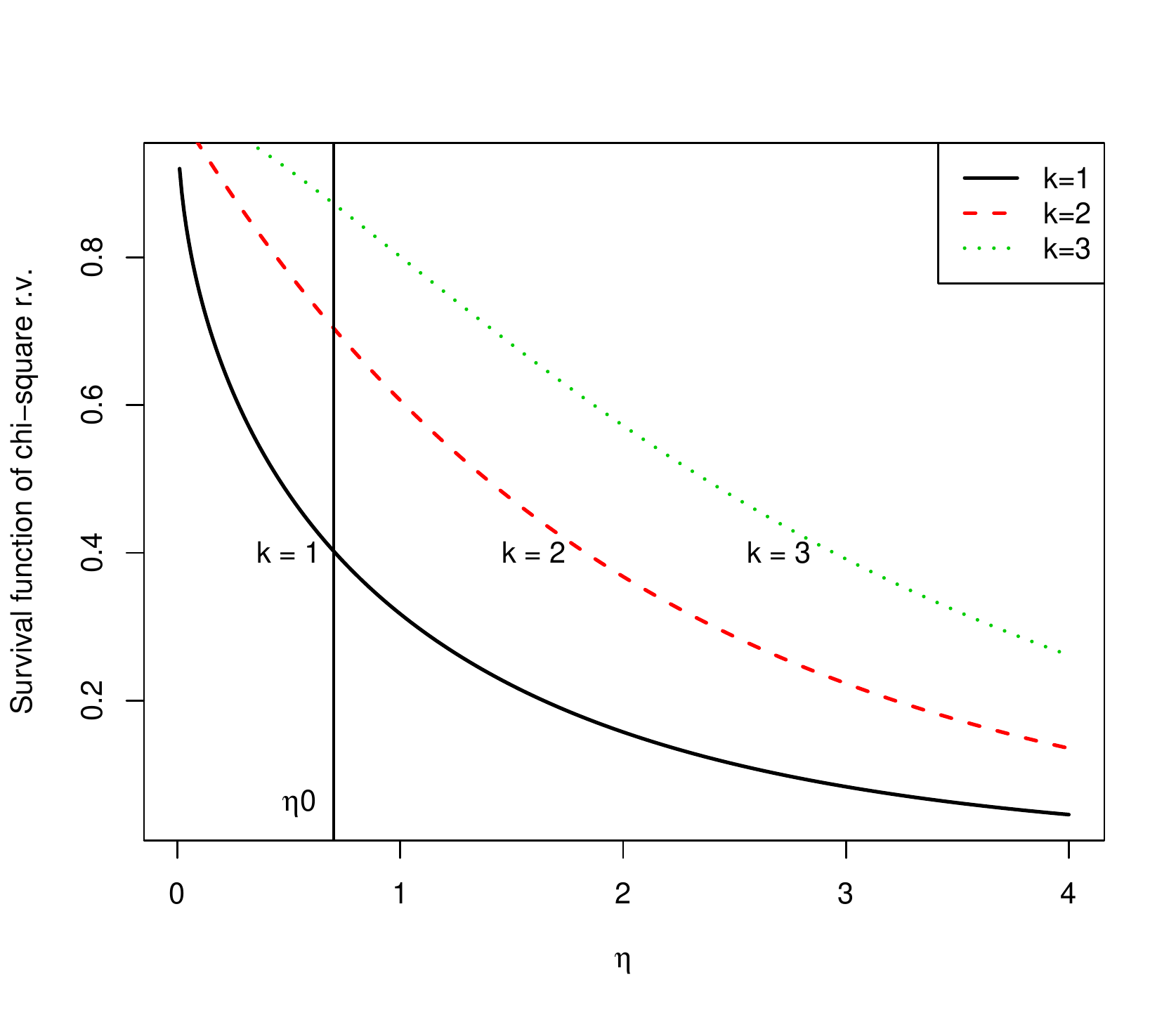}
\caption{ The survival function of $\chi^2$ r.v. as $k=1,2,3$}
\end{figure}

Another drawback is that, with small number of class $m$, \cite{haberman88} discussed that: ``chi-squared test statistics may be asymptotically
inconsistent even in cases in which a satisfactory chi-squared approximation exists for the distribution under the null hypothesis."

Consequently, a better way of comparing distributions is to use a non-parametric test not depending on the parametric assumption of the models. One of notable non-parametric test is Kolmogorov-Smirnov test by using following statistic:
\[{D_n} = \mathop {\sup }\limits_x \left| {{{\hat F}_n}(x) - {F_0}(x)}\right|,\]
which is a nonparametric test of continuous distributions. And it could be modified for discrete distribution:
\[{D_n} = \mathop {\sup }\limits_x \left| {{{\hat F}_n}(x) - {F_0}(x)} \right| = \mathop {\max }\limits_{i \in I} [\max (|{{\hat F}_n}({x_i}) - {F_0}({x_i})|,\mathop {\lim }\limits_{x \to {x_i}} |{{\hat F}_n}({x_i}) - {F_0}({x_{i - 1}})|)],\]
where $x_i$'s are the discontinuity points ($I$ is the countable index set).

Fortunately, \cite{arnold11} developed the R package ``dgof", which can calculate Kolmogorov-Smirnov test of the discrete distribution. The non-parametric test avoids the assumption of parametric model, hence it is an effective way to evaluate different distributions' performance.

\section{Applications with simulated and real data}

In this section, we will describe four examples of fitting data by CMNB distribution, and we will compare them with those by the negative binomial and COM-Poisson distribution. The notations, CMNB, NB and CMP distribution, in the following tables, are representing the CMNB, negative binomial and COM-Poisson, respectively. By using scoring method in section \ref{sec:mle}, we could estimated the CMNB distribution with three parameters. The original and expected frequencies, parameter estimators (obtained by maximum likelihood method), the $\chi^2$ and  K-S statistics, and corresponding $p$-values. are all being considered in the tables below.

\subsection{Simulated Data}
In this subsection, the inverse method mentioned in section \ref{generation} were used to generate random variables from CMNB, NB and COM-Poisson distribution with certain parameters. And we compare the performance of the fit by CMNB, NB and COM-Poisson distribution.

\textbf{Example 1.}
As our first simulation example, we consider the original data that are following the NB distribution. We use inverse method to draw 10000 random samples from NB distribution with parameter $(r=1, p=0.5)$, and fit the data with above-mentioned distributions, the results are shown in the following table \ref{simulate_nb}.
\begin{table}[H]
\footnotesize
\caption{Fitting results of simulated data from NB$(1, 0.5)$}\label{simulate_nb}
\centering
\begin{tabular}{lllll}
\hline
Sample values & Frequency & \multicolumn{3}{c}{Fitted Values} \\
\cline{3-5}
\multicolumn{2}{l}{} & CMNB & NB & CMP \\
\hline
  0 & 5060 & 5054 & 5049 & 5019 \\
  1 & 2480 & 2494 & 2492 & 2502 \\
  2 & 1199 & 1237 & 1236 & 1248 \\
  3 & 638 & 615 & 614 & 622 \\
  4 & 318 & 306 & 306 & 310 \\
  5 & 165 & 152 & 152 & 155 \\
  6 & 74 & 76 & 76 & 77 \\
  7 & 33 & 38 & 38 & 38 \\
  8 & 20 & 19 & 19 & 19 \\
  9 & 8 & 9 & 9 & 10 \\
  10 & 4 & 5 & 5 & 5 \\
  11 & 1 & 2 & 2 & 2 \\
Total & 10000 &10007  &9998 &10007\\
\hline
  par1 & &0.97 & 0.99 & 0.50 \\
  par2 & &1.00 & 0.50 & 0.00 \\
  par3 & &0.51 &  &  \\
  $\chi^2$ & &5.29 & 5.44 & 5.83 \\
  d.f. of $\chi^2$ & &8.00 & 9.00 & 9.00 \\
  p value of $\chi^2$ & &0.73 & 0.79 & 0.76 \\
  K-S & &0.002199 &0.003976 &0.004451 \\
  p value of K-S & &1.000000 &0.997435 &0.988831 \\
   \hline
\end{tabular}
\end{table}
In this case, although the data are generated from NB distribution, the CMNB distribution can also be recognized as the true distribution, as the estimator of $\nu$ is 1. That is to say, the data can be seen negative binomial distributed, and note that he result of  $\chi^2$ test and K-S test is nearly the same, all of these indicate that the CMNB distribution is an extension of NB distribution.

\textbf{Example 2.}
In this example, we will evaluate the performance of NB distribution, when the original data are generated from CMNB distribution. Assume $X_1, X_2, \ldots, X_n$ are independent and identically CMNB distributed random variables with parameters $(r,\nu,p)$. Let $n = 10000$ and $(r,\nu,p) = (0.005, 0.1, 0.5)$, we can generate $n$ sample points $x_1, x_2,\ldots,x_n$ through inverse method introduced in section \ref{generation}. The fitting results are summarized below:
\begin{table}[H]
\footnotesize
\caption{Fitting results of simulated data from CMNB(0.005, 0.1 0.5)}\label{simulated_cmnb}
\centering
\begin{tabular}{lllll}
\hline
Sample values & Frequency & \multicolumn{3}{c}{Fitted Values} \\
\cline{3-5}
\multicolumn{2}{l}{} &  CMNB & NB & CMP \\
\hline
  0 & 6442 & 6449 & 6421 & 5937 \\
  1 & 1866 & 1878 & 1951 & 2413 \\
  2 & 874 & 869 & 837 & 981\\
  3 & 435 & 415 & 394 & 399\\
  4 & 188 & 201 & 194 & 162\\
  5 & 101 & 98 & 98 & 66 \\
  6 & 55 & 48 & 50 & 27 \\
  7 & 19 & 24 & 26 & 11\\
  8 & 7 & 12 & 14 & 4 \\
  9 & 8 & 6 & 7 & 2 \\
   10 & 2 & 3 & 4 & 1\\
   11 & 2 & 1 & 2 & 0 \\
   12 & 1 & 1 & 1 & 0\\
   Total & 10000 & 10005  & 9999 & 10003 \\
\hline
  par1 & & 0.01 & 0.55 & 0.41 \\
  par2 & & 0.12 & 0.45 & 0.00 \\
  par3 & & 0.50 &  &  0.44\\
  $\chi^2$ & &7.26 & 16.38 & 281.94 \\
  d.f. of $\chi^2$ & &9.00 & 10.00 & 10.00 \\
  p value of $\chi^2$ & &0.61 & 0.09 & 0.00 \\
  K-S & &0.001484 & 0.006484 & 0.050678 \\
  p value of K-S & &1.000000 & 0.794579 & 0.000000 \\
   \hline
\end{tabular}
\end{table}
From table \ref{simulated_cmnb}, we note first that in $\chi^2$ test, the CMNB distribution's $\chi^2$ statistic 7.26 is dramatically smaller than NB distribution's 16.38, while the corresponding p-values are 0.61 and 0.09 respectively, which implies that NB distribution may not be a reasonable fit. As for the result of the K-S test, it suggests that CMNB distribution performs best, because the p-value of CMNB distribution is approximately 1, while NB's and COM-Poisson's are 0.79 and 0.

\subsection{Real data analysis}
 In this subsection, the distributions mentioned above are considered here to analyze real actuarial claim data that have ultrahigh zero-inflated and overdispersion properties, then compare its Kolmogorov-Smirnov test and Chi-squared test.

\textbf{Example 3.}
Let us consider the claim counts of the third party liability vehicle insurance (see \cite{willmot87} for data set in an Zaire insurance company) which correspond to claims from 4000 vehicle policies. \cite{G¨®mez-D¨¦niz2014} analyzed the data using negative binomial distribution and found that it is a reasonable fit. We analyze the data by CMNB, NB and CMP distribution, and the results are summarized below:
\begin{table}[H]
\footnotesize
\caption{Fit of Willmot2 data}\label{Willmot}
\centering
\begin{tabular}{lllll}
\hline
No. of claims & Frequency & \multicolumn{3}{c}{Fitted Values} \\
\cline{3-5}
\multicolumn{2}{l}{} &  CMNB & NB & CMP \\
\hline
  0 & 3719 & 3720 & 3719 & 3681 \\
  1 & 232 & 231 & 230 & 294 \\
  2 & 38 & 39 & 40 & 23 \\
  3 & 7 & 8 & 8 & 2 \\
  4 & 3 & 2 & 2 & 0 \\
  5 & 1 & 1 & 0 & 0 \\
Total &  4000 &4001 &3999 &3991\\
\hline
  par1 & &0.57 & 0.22 & 0.08 \\
  par2 & &3.06 & 0.71 & 0.00 \\
  par3 & &0.35 &  &  \\
  $\chi^2$ & &1.01 & 1.56 & 173.28 \\
  d.f. of $\chi^2$ & &2.00 & 3.00 & 3.00 \\
  p value of $\chi^2$ & &0.60 & 0.67 & 0.00 \\
  K-S & &0.000250 & 0.000500 & 0.009500 \\
  p value of K-S & &1.000000 & 1.000000 & 0.863178 \\
\hline
\end{tabular}
\end{table}
It's a well known fact that NB distribution is a popular choice to fit the claim data in actuarial science. However findings in Table \ref{Willmot} suggest that, CMNB might be a better choice for fitting this data. It appears that, for chi-square statistic, fitting the chi-square statistic with the NB distribution, it turns out to be $1.56$, which is dramatically larger than CMNB's $1.01$, and the p-values correspond to the $\chi^2$ statistics is nearly the same. As for Kolmogorov-Smirnov test, although the p-value is both 1, it still can be seen that the K-S statistic of CMNB is slightly smaller than NB's. All of these show that the CMNB is superior to the fit the data.

\textbf{Example 4.}
For this example, we use the car insurance claim data of a Chinese insurance company (see \cite{Wang2000}), which were modeled with negative binomial distribution. Total insurance policies are $n=35072$. We analyze the data with the above-mentioned distributions, and the results are shown in table \ref{real_2}.
\begin{table}[H]
\footnotesize
\caption{Fit of Car insurance claim data}\label{real_2}
\centering
\begin{tabular}{lllll}
\hline
No. of Claims & Frequency & \multicolumn{3}{c}{Fitted Values} \\
\cline{3-5}
\multicolumn{2}{l}{} &  CMNB & NB & CMP \\
\hline
  0 & 27141 & 27177 & 27166 & 26599 \\
  1 & 5789 & 5666 & 5664 & 6430 \\
  2 & 1443 & 1554 & 1563 & 1554 \\
  3 & 457 & 466 & 467 & 376 \\
  4 & 155 & 146 & 145 & 91 \\
  5 & 56 & 47 & 46 & 22 \\
  6 & 27 & 15 & 15 & 5 \\
  7 & 2 & 5 & 5 & 1 \\
  8 & 1 & 2 & 2 & 0 \\
  9 & 1 & 1 & 1 & 0 \\
Total & 35072 &35079 &35074 &35078 \\
\hline
  par1 & &0.95 & 0.61 & 0.24 \\
  par2 & &10.40 & 0.66 & 0.00 \\
  par3 & &0.36 &  &  \\
  $\chi^2$ & &24.16 & 27.88 & 300.60 \\
  d.f. of $\chi^2$ & &6.00 & 7.00 & 7.00 \\
  p value of $\chi^2$ & &0.00 & 0.00 & 0.00 \\
  K-S & &0.002667 & 0.002905 & 0.015584 \\
  p value of K-S & &0.964199 & 0.928729 & 0.000000 \\
   \hline
\end{tabular}
\end{table}
As can be observed from table \ref{real_2}, CMNB is the clear winner. CMNB distribution outperforms other distributions by either $\chi^2$ test or K-S test, as $\chi^2$ statistic and K-S statistic of CMNB is slightly smaller than that of NB's, and dramatically smaller than CMP's. It shows that their performance is on the whole dominated by CMNB distribution.

\section{Further researches}
Many discrete distributions in aspects of related statistical models are widely proposed in plenty of research articles. The CMNB distribution can be used in driving count data in some generalized linear models (GLMs). For example, Poisson regression, COM-Poisson regression and negative binomial regression (NBR), whose driving distribution of count data are actually the special case or limiting case of CMNB distribution, see Figure \ref{relation} for a visual relations. Even in the popular logistic regression, the assuming distribution lying in this model is Bernoulli distribution which is a limiting case of COM-Poisson distribution. Besides GLMs, high-dimensional GLMs (i.e., the sample size $n$ may be greater than the number of covariates $p$) should also be considered for CMNB, see \cite{zhangjia17} and references therein for high-dimensional NBR.
The discrete frailty item as a random effect in survival analysis model (or Long-term survival models, see \cite{rodrigues12}), which could be considered to set some flexible distributions. Besides the researches of regression model, in our paper, both conditional distribution and Stein identity characterizations are obtained. Test of statistic based on the novel characterization is more reasonable, since that the distribution-free and omnibus goodness-of-fit test like Kolmogorov-Smirnov and Chi-squared does not depend on certain types(or families) of distributions, may have their own drawbacks. For example, some new tests of a class of count distributions which includes the Poisson is constructed from Stein identity characterization~(see \cite{meintanis08}), and some tests of logarithmic series distribution comes from conditional distribution characterization~(see \cite{ramalingam84}). The goodness-of-fit tests between two continuous probability distributions based on combining Stein¡¯s identity is studied by \cite{Liu16}, and we consider study Stein discrepancy goodness-of-fit tests for discrete distribution in the future.

\section{Acknowledgments}
The proposed COM-negative binomial distribution of this work was
as early as conceptualized in Dec. 2014 when the authors saw the online version of \cite{imoto14}. The authors want to thank Prof. K{\"o}hler R. for mailing the valuable encyclopedia of discrete univariate distributions (\cite{wimmer99}) to us. This work was partly supported by the National Natural Science Foundation of China (No.11201165).

\section{References}


\begin{thebibliography}{11}
\providecommand{\natexlab}[1]{#1} \providecommand{\url}[1]{\texttt{#1}} \providecommand{\urlprefix}{URL }
\expandafter\ifx\csname urlstyle\endcsname\relax
  \providecommand{\doi}[1]{doi:\discretionary{}{}{}#1}\else
  \providecommand{\doi}{doi:\discretionary{}{}{}\begingroup
  \urlstyle{rm}\Url}\fi
\providecommand{\eprint}[2][]{\url{#2}}

\bibitem[{Arnold and Emerson(2011)}]{arnold11}
Arnold, T. B., Emerson, J. W. (2011). Nonparametric goodness-of-fit tests for discrete null distributions. The R Journal, 3(2), 34-39.

\bibitem[{Bohning(2016)}]{Bohning16}
Bohning, D. (2016). Ratio plot and ratio regression with applications to social and medical sciences. Statistical Science, 31(2), 205-218.

\bibitem[{Borges et al.(2014)}]{borges14}
Borges, P., Rodrigues, J., Balakrishnan, N., Baz¨¢n, J. (2014). A COM-Poisson type generalization of the binomial distribution and its properties and applications. Statistics \& Probability Letters, 87, 158-166.

\bibitem[{Brown and Phillips(1999)}]{brown99}
Brown, T. C., Phillips, M. J. (1999). Negative binomial approximation with Stein's method. Methodology and Computing in Applied Probability, 1(4), 407-421.

\bibitem[{Brown and Xia(2001)}]{brown01}
Brown, T. C., Xia, A. (2001). Stein's method and birth-death processes. Annals of probability, 29(3), 1373-1403.

\bibitem[{Chakraborty and Ong(2016)}]{chakrabortyOng16}
Chakraborty, S., Ong, S. H. (2016). A COM-Poisson-type generalization of the negative binomial distribution. Communications in Statistics-Theory and Methods, 45(14), 4117-4135.

\bibitem[{Chakraborty and Imoto(2016)}]{chakraborty16}
Chakraborty, S., Imoto, T. (2016). Extended Conway-Maxwell-Poisson distribution and its properties and applications. Journal of Statistical Distributions and Applications, 3(1), 5.

\bibitem[{Conway and Maxwell(1962)}]{conway62}
Conway, R. W., Maxwell, W. L. (1962). A queuing model with state dependent service rates. Journal of Industrial Engineering, 12(2), 132-136.

\bibitem[{Deng and Tan(2009)}]{deng09}
Deng, Z., Tan, Z. (2009). Fitting three-parameter Poisson-Tweedie model to claim frequency and test the location parameter. Mathematical Theory and Applications, 29(1), 51-55. (in Chinese) \url{http://en.cnki.com.cn/Article_en/CJFDTotal-LLYY200901014.htm}

\bibitem[{Denuit(1997)}]{Denuit1997}
Denuit, M. (1997). A new distribution of Poisson-type for the number of claims. Astin Bulletin, 27(02), 229-242.

\bibitem[{Denuit et al.(2007)}]{denuit07}
Denuit, M., Mar{\'e}chal, X., Pitrebois, S., Walhin, J. F. (2007). Actuarial modelling of claim counts: Risk classification, credibility and bonus-malus systems. Wiley.


\bibitem[{G{\'o}mez-D{\'e}niz(2011)}]{gomez11}
G{\'o}mez-D{\'e}niz, E., Sarabia, Jos{\'e} Mar{\'\i}a, Calder{\'\i}n-Ojeda, E. (2011). A new discrete distribution with actuarial applications. Insurance: Mathematics and Economics, 48(3), 406-412

\bibitem[{G{\'o}mez-D{\'e}niz(2014)}]{G¨®mez-D¨¦niz2014}
G{\'o}mez-D{\'e}niz, E., Calder{\'\i}n-Ojeda, E. (2014). Unconditional distributions obtained from conditional specification models with applications in risk theory. Scandinavian Actuarial Journal, 2014(7), 602-619.

\bibitem[{Gupta et al.(2014)}]{gupta14}
Gupta, R. C., Sim, S. Z., Ong, S. H. (2014). Analysis of discrete data by Conway-Maxwell Poisson distribution. AStA Advances in Statistical Analysis, 1-17.

\bibitem[{Haberman(1988)}]{haberman88}
Haberman, S. J. (1988). A warning on the use of chi-squared statistics with frequency tables with small expected cell counts. Journal of the American Statistical Association, 83(402), 555-560.

\bibitem[{Ibragimov(1956)}]{ibragimov08}
Ibragimov, I. D. A. (1956). On the composition of unimodal distributions. Theory of Probability \& Its Applications, 1(2), 255-260.

\bibitem[{Imoto(2014)}]{imoto14}
Imoto, T. (2014). A generalized Conway-Maxwell-Poisson distribution which includes the negative binomial distribution. Applied Mathematics and Computation, 247, 824-834.

\bibitem[{Johnson et al.(2005)}]{johnson05}
Johnson, N. L., Kemp, A. W., Kotz S. (2005). Univariate discrete distributions, 3rd. Wiley.

\bibitem[{Kadane(2016)}]{kadane15}
Kadane, J. B. (2016). Sums of possibly associated Bernoulli variables: The Conway-Maxwell-Binomial distribution. Bayesian Analysis, 11(2), 403-420.

\bibitem[{Kagan et al.(1973)}]{kagan73}
Kagan, A. M., Linnik, Y. V., Rao, C. R. (1973). Characterization problems in mathematical statistics, Wiley.

\bibitem[{Kaas(2008)}]{kaas08}
Kaas, R., Denuit, M., Dhaene, J., Goovaerts, M. (2008). Modern Actuarial Risk Theory Using R, Second Edition. Springer, Berlin.

\bibitem[{Kokonendji et al.(2008)}]{kokonendji08}
Kokonendji, C. C., Mizere, D., Balakrishnan, N. (2008). Connections of the Poisson weight function to overdispersion and underdispersion. Journal of Statistical Planning and Inference, 138(5), 1287-1296.

\bibitem[{Liu, Lee and Jordan(2016)}]{Liu16}
Liu, Q., Lee, J., Jordan, M. (2016). A kernelized Stein discrepancy for goodness-of-fit tests. In International Conference on Machine Learning (pp. 276-284).

\bibitem[{Meintanis and Nikitin(2008)}]{meintanis08}
Meintanis, S. G., Nikitin, Y. Y. (2008). A class of count models and a new consistent test for the Poisson distribution. Journal of Statistical Planning and Inference, 138(12), 3722-3732.

\bibitem[{Patil and Seshadri(1964)}]{patil64}
Patil, G. P., Seshadri, V. (1964). Characterization theorems for some univariate probability distributions. Journal of the Royal Statistical Society. Series B (Methodological), 286-292.

\bibitem[{Rao and Rubin(1964)}]{rao64}
Rao, C. R., Rubin, H. (1964). On a characterization of the Poisson distribution. Sankhy{\=a}: The Indian Journal of Statistics, Series A, 295-298.

\bibitem[{R{\'e}nyi(1961)}]{renyi61}
R{\'e}nyi, A. (1961). On measures of entropy and information. In Fourth Berkeley symposium on mathematical statistics and probability (Vol. 1, pp. 547-561).

\bibitem[{Ramalingam and Jagbir(1984)}]{ramalingam84}
Ramalingam, S., Jagbir, S. (1984). A characterization of the logarithmic series distribution and its application. Communications in Statistics-Theory and Methods, 13(7), 865-875.

\bibitem[{Rodrigues et al.(2012)}]{rodrigues12}
Rodrigues, J., de Castro, M., Cancho, V. G., Balakrishnan, N. (2009). COM-Poisson cure rate survival models and an application to a cutaneous melanoma data. Journal of Statistical Planning and Inference, 139(10), 3605-3611.

\bibitem[{Sellers et al.(2012)}]{sellers12}
Sellers, K. F., Borle, S., Shmueli, G. (2012). The COM-Poisson model for count data: a survey of methods and applications. Applied Stochastic Models in Business and Industry, 28(2), 104-116.

\bibitem[{Shaked and Shanthikumar(2007)}]{shaked07}
Shaked, M., Shanthikumar, J. G. (2007). Stochastic orders. Springer.

\bibitem[{Shanbhag(1977)}]{shanbhag77}
Shanbhag, D. N. (1977). An extension of the Rao-Rubin characterization of the Poisson distribution. Journal of Applied Probability, 640-646.

\bibitem[{Shmueli et al.(2005)}]{shmueli05}
Shmueli, G., Minka, T. P., Kadane, J. B., Borle, S., Boatwright, P. (2005). A useful distribution for fitting discrete data: revival of the Conway-Maxwell-Poisson distribution. Journal of the Royal Statistical Society: Series C (Applied Statistics), 54(1), 127-142.

\bibitem[{Steutel(1970)}]{steutel70}
Steutel, F. W. (1970). Preservation of infinite divisibility under mixing and related topics. MC Tracts, 33, 1-99.

\bibitem[{Steutel and van Harn(2003)}]{steutel03}
Steutel, F. W., Van Harn, K. (2003). Infinite divisibility of probability distributions on the real line. CRC Press, New York.

\bibitem[{Tsallis(1988)}]{tsallis88}
Tsallis, C. (1988). Possible generalization of Boltzmann-Gibbs statistics. Journal of statistical physics, 52(1-2), 479-487.

\bibitem[{Temme(2011)}]{temme11}
Temme, N. M. (2011). Special functions: An introduction to the classical functions of mathematical physics. Wiley.

\bibitem[{Wang and Lei, 2000}]{Wang2000}
Wang, L.M., Lei, Y.L., (2000). Simulation and EM algorithm for the distribution of number of claim in the heterogeneous portfolio. Communications on Applied Mathematics and Computational Science. 14(2) pp. 71-78.

\bibitem[{Willmot(1987)}]{willmot87}
Willmot, G. E. (1987). The Poisson-inverse Gaussian distribution as an alternative to the negative binomial. Scandinavian Actuarial Journal, 1987(3-4), 113-127.

\bibitem[{Wimmer et al.(1995)}]{wimmer95}
Wimmer, G., K{\"o}hler, R., Grotjahn, R., Altmann, G. (1994). Towards a theory of word length distribution. Journal of Quantitative Linguistics, 1(1), 98-106.

\bibitem[{Wimmer and Altmann(1999)}]{wimmer99}
Wimmer, G., Altmann, G. (1999). Thesaurus of univariate discrete probability distributions. Stamm.

\bibitem[{Zhang et al.(2014)}]{zhang14}
Zhang, H., Liu, Y., Li, B. (2014). Notes on discrete compound Poisson model with applications to risk theory. Insurance: Mathematics and Economics, 59, 325-336.

\bibitem[{Zhang and Li(2016)}]{zhang16}
Zhang, H., Li, B. (2016). Characterizations of discrete compound Poisson distributions. Communications in Statistics-Theory and Methods, 45(22), 6789-6802.

\bibitem[{Zhang et al.(2017)}]{zhang17}
Zhang, H., Li, B., Kerns, G. J. (2017). A characterization of signed discrete infinitely divisible distributions. Studia Scientiarum Mathematicarum Hungarica, 54(4), 446-470.

\bibitem[{Zhang and Jia(2017)}]{zhangjia17}
Zhang, H., \& Jia, J. (2017). Elastic-net regularized High-dimensional Negative Binomial Regression: Consistency and Weak Signals Detection. arXiv preprint arXiv:1712.03412.

\bibitem[{Zygmund(2002)}]{zygmund02}
Zygmund, A. (2002). Trigonometric series. Cambridge University Press, Cambridge.

\end{thebibliography}
\end{document}